\documentclass[a4paper,11pt,draft]{amsart}
\usepackage{amsmath,amsfonts,amssymb,amsthm}
\usepackage{graphics}
\usepackage{enumerate}
\usepackage{epsfig}
\usepackage{esint}
\usepackage{newlfont}
\usepackage{fancyhdr}
\usepackage{stackrel}
\usepackage{color}
\setlength{\parindent}{0cm}

\usepackage[body={15cm,21.5cm},centering]{geometry}

\newcommand{\lessim}{\stackrel{<}{\sim}}

\newcommand{\ignore}[1]{}

\newtheorem{definition}{Definition}
\newtheorem{proposition}{Proposition}
\newtheorem{theorem}{Theorem}
\newtheorem{remark}{Remark}
\newtheorem{lemma}{Lemma}
\newtheorem{corollary}{Corollary}


\newcommand{\sym}{\mathrm{sym}}
\newcommand{\loc}{\mathrm{loc}}

\newcommand{\ho}{\mathrm{hom}}

\newcommand{\dist}{\mathrm{dist}}

\newcommand{\R}{\mathbb{R}}
\newcommand{\Z}{\mathbb{Z}}
\newcommand{\N}{\mathbb{N}}

\newcommand{\Id}{\text{Id}}
\newcommand{\e}{\varepsilon}

\newcommand{\partn}{\mathcal{P}}

\newcommand{\calI}{\mathcal{I}}

\newcommand{\Ent}{\mathrm{Ent}\,}

\newcommand{\felix}{\frac18}
\newcommand{\Felix}{3}

\newcommand{\ru}{\underline{r_*}}

\mathchardef\emptyset="001F

\def\Exc{\operatorname{Exc}}


\newcommand{\diam}[1]{\mathrm{diam}\left( #1\right)}

\newcommand{\varL}[1]{\mathrm{var}_L\left[#1\right]}
\newcommand{\covL}[2]{\mathrm{cov}_L\left[#1;#2\right]}
\newcommand{\expec}[1]{\left\langle #1 \right\rangle}

\newcommand{\step}[1]{\noindent \textit{Step} #1.}
\newcommand{\substep}[1]{\noindent \textit{Substep} #1.}
\newcommand{\supp}[1]{\mathrm{supp} #1}

\newcommand{\fun}{\mathrm{fct}}
\newcommand{\osc}{\mathrm{osc}}


\title[A regularity theory for random elliptic operators]
{A regularity theory for random elliptic operators}
\author[A. Gloria]{Antoine Gloria}
\author[S. Neukamm]{Stefan Neukamm}
\author[F. Otto]{Felix Otto}
\date{\today}
\address[Antoine Gloria]{Sorbonne Universit\'e, CNRS, Universit\'e de Paris, Laboratoire Jacques-Louis Lions (LJLL), F-75005 Paris, France 
\& 
Universit\'e Libre de Bruxelles, Belgium}
\email{antoine.gloria@upmc.fr}
\address[Stefan Neukamm]{Faculty of Mathematics, TU Dresden, Germany}
\email{stefan.neukamm@tu-dresden.de}
\address[Felix Otto]{Max Planck Institute for Mathematics in the Sciences, Leipzig, Germany}
\email{otto@mis.mpg.de}

\begin{document}

\maketitle


{\bf Abstract}: 
Since the seminal results by Avellaneda \& Lin it is known that elliptic operators with periodic coefficients 
enjoy the same regularity theory as the Laplacian on large scales.
In a recent inspiring work, Armstrong \& Smart proved large-scale Lipschitz estimates for such operators with random coefficients satisfying a finite-range of dependence assumption.
In the present contribution, we extend the \emph{intrinsic large-scale} regularity of Avellaneda \& Lin (namely,  intrinsic large-scale Schauder and Calder\'eron-Zygmund estimates)
to  elliptic systems with random coefficients. The scale at which this improved regularity kicks in is characterized by 
a stationary field $r_*$ which we call the minimal radius. This regularity theory is \textit{qualitative} in the sense that $r_*$ is almost surely finite (which yields a new Liouville theorem) under mere ergodicity, and it is \textit{quantifiable} in the sense that $r_*$ has high stochastic integrability provided the coefficients satisfy quantitative mixing assumptions.
We illustrate this by establishing \emph{optimal} moment bounds on $r_*$ for a class of coefficient fields satisfying 
a multiscale functional inequality, and in particular for Gaussian-type coefficient fields with arbitrary slow-decaying correlations.

\bigskip

\tableofcontents

\section{Introduction}

This article is the first of a series that develops a quantitative theory for large-scale properties of random elliptic operators.
It presents digested and optimized versions of the proofs of the first complete version of the manuscript (dated August 2015).
The series consists of three parts: An \textit{intrinsic large-scale regularity theory} in the present contribution, applications to \textit{quantitative stochastic homogenization} in \cite{GNO-quant},
and the \textit{characterization of large-scale fluctuations} in \cite{DGO,DGO-Gaus,DFGO-Gaus}.

\medskip

The classical theory of homogenization for elliptic systems  $-\nabla\cdot a\nabla$ with periodic, uniformly elliptic coefficients $a$ started with contributions of the French, the Italian and the Russian schools (e.g.~see \cite{Tartar-CP,Murat-78,Bensoussan-Lion-Papa-78, DeGiorgi-75, Spagnolo-75, JKON-79,JKO-94}). Classical homogenization states that on \textit{large scales} (i.e., scales much larger than
the period of $a$) the resolvent of $-\nabla\cdot a\nabla$ is close to the resolvent of the so-called homogenized operator $-\nabla\cdot a_{\hom}\nabla$, where $a_{\hom}$ are \textit{spatially homogeneous} coefficients. In the seminal work \cite{Avellaneda-Lin-87} Avellaneda and Lin observed that homogenization can be used to lift the regularity theory for the homogenized (constant-coefficient) operator $-\nabla\cdot a_{\hom}\nabla$ to the original variable-coefficient operator $-\nabla\cdot a\nabla$. Since elliptic systems with measurable coefficients basically only enjoy $L^2$-regularity theory (as opposed to the maximal regularity of elliptic systems with constant coefficients), Avellaneda and Lin's results yield a strong improvement of regularity (on large scales). More precisely, in \cite[Section 3.1]{Avellaneda-Lin-87} Avellaneda and Lin derive \textit{intrinsic} $C^{1,1-}$-a priori
estimates on $a$-harmonic functions (where $C^{k,1-}$ means $C^{k,\alpha}$ for all $\alpha<1$). Here \textit{intrinsic} refers to the fact that the estimates are formulated  not in Euclidean (flat) coordinates, but with help of the so-called \textit{harmonic coordinates}, which are based on the notion of the \textit{corrector} -- a key object in the theory of homogenization. In the present paper we extend the intrinsic large-scale regularity theory of Avellaneda and Lin to the case of elliptic systems with random (in particular stationary \& ergodic) coefficients. This extension from the periodic to the random setting is non-trivial since the original argument of Avellaneda and Lin crucially relies on a compactness argument (related to the compactness of the torus associated with the periodic coefficients). 
\smallskip

Qualitative stochastic homogenization of uniformly elliptic equations with random coefficients was first established by Papanicolaou and Varadhan \cite{Papanicolaou-Varadhan-79} and by Kozlov \cite{Kozlov-78}.  The argument of Papanicolaou and Varadhan \cite{Papanicolaou-Varadhan-79} is based on Tartar's method of oscillating test-functions, and, in the core of the analysis, extends the notion of corrector to the random setting: Roughly speaking, if $a$ denotes a random coefficient field, and $e$ a fixed unit direction of $\R^d$, then the associated corrector $\phi$ is defined as a sublinearly growing solution of
\begin{equation}\label{intro-corr00}
-\nabla \cdot a \nabla \phi=\nabla \cdot a e\qquad\text{in }\R^d
\end{equation}
(see Lemma~\ref{si} below for the precise statement). The first example of a large-scale regularity result due to randomness  is the higher stochastic integrability of the gradient $\nabla\phi$ of the corrector obtained in \cite{Gloria-Otto-09,Gloria-Otto-10b,Gloria-Neukamm-Otto-14}  (in the course of proving quantitative results in stochastic homogenization). 
Developing a quantitative theory obviously requires quantitative ergodicity assumptions, which we make in the form of a functional inequality (e.g.~a spectral gap estimate), inspired by the unpublished work \cite{Naddaf-Spencer-98} by Naddaf and Spencer. Among other estimates, we proved that if the random coefficients $a$ satisfy a spectral gap estimate (see Definition~\ref{def:sLSI}), then for all $1\leq p<\infty$
$$
\Big\langle \big(\fint_{B_1(0)}|\nabla \phi|^2\big)^\frac p2\Big\rangle^\frac1p \,\lesssim\, 1\,\sim\, \Big\langle \big(\fint_{B_1(0)}|ae|^2\big)^\frac p2\Big\rangle^\frac1p,
$$
where the multiplicative constant depends on $p$. (Note that the case $p=2$ follows by an elementary energy estimate.) In view of \eqref{intro-corr00}, this result is reminiscent of a Calder\'on-Zygmund estimate. It is a \textit{large-scale regularity} result since it involves taking the expectation $\expec{\cdot}$, which by Birkhoff's ergodic theorem turns into the ``large-scale'' spatial average $\lim_{R\to\infty}\fint_{B_R}(\cdot) $.
In terms of H\"older regularity, the first large-scale regularity result in the random setting was by Marahrens and the third author  in \cite{Marahrens-Otto-13}: For scalar equations and 
under a strong quantitative ergodicity assumption in form of a Logarithmic Sobolev Inequality (see Definition~\ref{def:sLSI}),  
(large-scale) $C^{0,1-}$-estimates for $a$-harmonic functions were established (see also \cite{Gloria-Marahrens-14}).
Common key elements to these works are \textit{functional inequalities} to quantify ergodicity, a \textit{sensitivity calculus} to estimate the dependence of a solution, like $\nabla \phi$, on the coefficient field $a$, and input from \textit{deterministic regularity theory}, which in \cite{Gloria-Otto-09,Gloria-Otto-10b,Gloria-Neukamm-Otto-14,Marahrens-Otto-13} are encoded in form of Green's function estimates that crucially use De Giorgi-Nash-Moser regularity theory. The latter restricts the results of these works to scalar equations (see however  \cite{BO17, BMN17} for systems, and, thanks to \cite{CGO-17}, also \cite{Marahrens-Otto-13} extends to systems).
 A motivation for the present work is to replace the deterministic regularity theory used in these works by the intrinsic large-scale regularity theory developed in the present paper  -- this will be addressed in our follow-up work \cite{GNO-quant}.
 
\smallskip

With another flavor and under the sole assumption of stationarity and ergodicity (as opposed to the strong quantitative ergodicity assumption of \cite{Marahrens-Otto-13}), Benjamini, Duminil-Copin, Kozma, and Yadin proved in \cite{BCKY-11} a Liouville theorem in a very general context which states that strictly sublinear $a$-harmonic functions are constants. Note that Liouville theorems and Schauder theory are intimately connected: E.g.~Simon derived Schauder estimates \cite[Theorem
  1]{Simon-97} indirectly from a Liouville result \cite[Lemma~1]{Simon-97}; while Avellaneda and Lin  \cite{Avellaneda-Lin-89} obtained Liouville theorems of any order for elliptic systems in the periodic setting by appealing to their large-scale regularity theory. 
\smallskip

In a recent inspiring work, Armstrong and Smart \cite{Armstrong-Smart-14} developed a large-scale regularity theory in the random setting. It is the first result that implements the general strategy \cite{Avellaneda-Lin-87} of Avellaneda and Lin (of lifting the regularity theory for the homogenized operator to the original operator) in a situation where the above-mentioned compactness argument fails. Roughly speaking, in their approach the compactness argument is replaced by a quantitative estimate of the homogenization error that can be established under  quantitative ergodicity. In contrast to our framework based on \emph{nonlinear mixing conditions} (as developed in \cite{Gloria-Otto-09,Gloria-Otto-10b,Gloria-Neukamm-Otto-14} and the present paper), Armstrong and Smart quantify ergodicity in terms of \textit{linear mixing conditions}. In \cite{Armstrong-Smart-14} they consider the scalar random case under a finite-range of dependence assumption (the strongest of the linear mixing conditions). 
On the one hand they reformulate the Campanato iteration of \cite{Avellaneda-Lin-87} in an abstract functional-analytic form that is oblivious to the PDE (see \cite[Lemma~5.1]{Armstrong-Smart-14}) and essentially states that if a function is close at all scales (down to unit scale) to functions with improvement of flatness, then that function must itself have an improvement of flatness (down to unit scale). Next, they show that if for Dirichlet problems the homogenization error decays algebraically, then $a$-harmonic functions are indeed close at all scales to functions with improvement of flatness (see \cite[Proposition~4.1]{Armstrong-Smart-14}), so that they are themselves Lipschitz (from unit scale onwards). On the other hand, they establish the algebraic (although largely suboptimal) decay of the homogenization error (at an $L^2$-level) within their assumptions (scalar equation, finite-range of dependence, symmetric coefficients) using an ingenious combination of subadditivity methods, duality, and a concentration argument. In contrast to the corrector-based, \textit{intrinsic} regularity approach of Avellaneda and Lin, their method is Euclidean.
The improvement over \cite{Marahrens-Otto-13} is twofold: In terms of regularity ($C^{1,0}$ versus $C^{0,1-}$) and in terms of stochastic integrability (finite nearly-optimal exponential moment versus finite algebraic moments). 

\medskip

In the first version of the present paper (cf.~the arXiv preprint \cite{GNO-preprint} in fall 2014), inspired by the works \cite{Avellaneda-Lin-87} of Avellaneda and Lin and  \cite{Armstrong-Smart-14} of Armstrong and Smart, we developed the first \textit{intrinsic} large-scale regularity theory for (possibly \emph{non-symmetric}) elliptic \emph{systems} (including linear elasticity) in the random setting. 
This large-scale regularity goes beyond Lipschitz estimates and is at the same time qualitative (it applies to merely ergodic coefficients) and quantifiable (in terms of stochastic integrability under 
quantitative mixing conditions). In place of a finite-range of dependence condition, we quantify ergodicity via a Logarithmic Sobolev Inequality in a variant that is flexible enough to cover coefficients with both weak and strong  correlations (which is also new).
On the one hand, this regularity theory (since it applies to the general ergodic case) is of interest for the study of the random conductance model in probability theory (see \cite{Kumagai-14, Biskup-11} for recent surveys). 
Quenched invariance principles and heat kernel estimates for degenerate general ergodic conductances 
indeed received much attention recently --- e.g.~see \cite{ADS16, DNS17}, and the Liouville theorems obtained by the approach of the present paper in \cite{NPhd17} (for the random conductance model) and in \cite{BFO16} (for degenerate elliptic systems).
On the other hand, the quantification of ergodicity via functional inequalities (in the form of multiscale functional inequalities as in the present paper) is particularly well-suited for the class 
of coefficient fields considered in the applied sciences: Most models studied in materials science are indeed generated starting from a (typically hidden) product or Gaussian structure (see~\cite{Torquato-02},
a reference textbook on random heterogeneous materials), and therefore not only satisfy \emph{linear mixing conditions} but also \emph{nonlinear mixing conditions} that can be captured in form of multiscale functional inequalities (see~\cite{DG1,DG2}). As we shall see, such nonlinear mixing conditions are crucial to establish the optimal stochastic integrability for the large-scale regularity theory for these models, see Remark~\ref{rem:AL} below.

\medskip

Between the first posted version \cite{GNO-preprint} and the current version of this paper, 
Armstrong and Mourrat \cite{Armstrong-Mourrat-14}, and subsequently Armstrong, Mourrat and Kuusi \cite{Armstrong-Mourrat-Kuusi-16,Armstrong-Mourrat-Kuusi-17,AKM-book} significantly extended the regularity theory of \cite{Armstrong-Smart-14} in several directions: First, using the framework of the Fitzpatrick duality theory, they were able to treat not only convex integral functionals with quadratic growth, but also monotone operators with quadratic growth (recovering the case of non-symmetric systems we studied in \cite{GNO-preprint}, albeit with a stronger notion of coercivity). Second, they showed that the subadditivity method of \cite{Armstrong-Smart-14} can be pushed forward to treat weaker linear mixing conditions on the coefficients (such as $\alpha$-mixing), lifted the Lipschitz theory to higher-order regularity, and proved moment bounds on the gradient of the corrector that depend on the alpha-mixing decay rate (algebraic decay rate yields algebraic moments, exponential decay rate yields exponential moments, albeit with an arbitrarily small loss of stochastic integrability). Third, they established optimal
growth estimates on the corrector and characterized its large-scale fluctuations 
 (as well as other related quantities) under the finite-range of dependence assumption --- their proof cannot does not extend in a straightforward way to the setting of functional inequalities (and therefore Gaussian statistics).  Likewise, the results of the first version \cite{GNO-preprint} of this paper have been extended in several directions, see discussion in the next paragraph.
Whereas  \cite{Armstrong-Mourrat-14,Armstrong-Mourrat-Kuusi-16,Armstrong-Mourrat-Kuusi-17} rely on variational arguments, the present work and its extensions rely on PDE analysis.

\medskip

To conclude this introduction, let us summarize the achievements of the present contribution.
We develop a  complete \textit{intrinsic large-scale regularity theory} for random elliptic operators that shows that the regularity theory for constant-coefficients elliptic systems extends to random elliptic operators at large scales.
We illustrate this by establishing maximal regularity at the level of  $C^{1,1-}$, i.e.~Schauder theory, and 
of $\dot H^{1,p}$, i.e.~Calder\'on-Zygmund theory. 
A key object that we introduce in this contribution is a stationary random field  $r_*$, which we call the \textit{minimal radius}. 
It characterizes the scale at which the improved intrinsic regularity theory kicks in.  In particular, this large-scale regularity theory is
\begin{itemize}
\item[(i)] \textit{qualitative} in the sense that $r_*$ is almost surely finite under the mere qualitative assumption of stationarity and ergodicity.
This is crucial to prove a Liouville result for subquadratic $a$-harmonic functions, see Corollary~\ref{c};
\item[(ii)] and at the same time  \textit{quantifiable} in the sense that $r_*$ can be proved to have stretched exponential moments if the ensemble of coefficient fields satisfies suitable functional inequalities. 
We prove this on  the representative example of a family of Gaussian coefficient fields the covariance of which decays arbitrarily slowly, cf.  Theorems~\ref{gbis}~\&~\ref{gbis2}, and obtain the optimal stochastic integrability (as opposed to the nearly-optimal stochastic integrability one would get using the approach of \cite{Armstrong-Smart-14}).
\end{itemize}
The definition of $r_*$ is based on an extended corrector $(\phi,\sigma)$, see Lemma~\ref{si}, which allows to represent the residuum of the homogenization error in divergence form. In addition to the standard corrector $\phi$, the extended corrector involves a skew-symmetric tensor field, which we
call $\sigma$. While it has not not been used in stochastic homogenization before, it is a standard object in \textit{periodic} homogenization, see for instance \cite[p.27]{JKO-94}, where it is used to establish quantitative two-scale expansions. The tensor $\sigma$ is related to the flux of the corrector and appears to be as important as the corrector itself. (This does not come as a surprise in view of the very definition of qualitative H-convergence: Weak convergence of the gradient of the solution \emph{and} weak convergence of the flux.) 
This notion of flux corrector turns out to be fundamental in stochastic homogenization, and has been taken up by subsequent work: Intrinsic higher-order regularity and Liouville theorems of all orders \cite{Fischer-Otto-15},
in half spaces \cite{FR-16}, in degenerate environments \cite{BFO16}, quantitative estimates on the corrector and error estimates both 
for ensembles that satisfy functional inequalities and ensembles of finite range of dependence  \cite{GNO-quant,GO-16,Mourrat-Gu-16,BFFO},  characterization of fluctuations (not only of the corrector but also of the solution operator) \cite{DGO,DGO-Gaus,DFGO-Gaus}, notion 
of multipoles \cite{BGO17}, long-time homogenization of the wave equation \cite{BG}, results on non-symmetric discrete models \cite{BMN17}, quantification of invariance principles for the random conductance
in degenerate environments \cite{Andres-Neukamm-1}, etc.

\medskip

Our approach to large-scale regularity is mainly inspired by the work of Avellaneda and Lin.
In particular, it is close to \cite[Section 3.1]{Avellaneda-Lin-87}, see Remark~\ref{rem:AL}. Incidentally, $\sigma$ is not used for that result and only used marginally in that paper
\cite[p.845]{Avellaneda-Lin-87}, and not capitalizing on its skew-symmetry. 
Compared to the series \cite{Armstrong-Mourrat-14,Armstrong-Mourrat-Kuusi-16,Armstrong-Mourrat-Kuusi-17}, our approach has 
advantages both in terms of stochastic results and large-scale regularity.
Compared to the large-scale Lipschitz regularity of \cite{Armstrong-Smart-14} (which we call here the mean-value property), our result is based on a mere \emph{smallness condition} as opposed to an algebraic convergence rate. This has two consequences: First it allows to cover the case of qualitative ergodicity, and second it allows to capture the \textit{optimal stochastic integrability} of $r_*$. 
In terms of large-scale regularity, our approach is intrinsic (which allows us to place ourselves at scale 1, and not at mesoscales) and the large-scale Calder\'on-Zygmund theory we derive here does not come with a loss 
of integrability and holds for all $1<p<\infty$.

\section{Statement of the main results}

\subsection{Assumptions and notation}

We start by specifying our assumptions on the coefficient fields,
and then recall the standard definition of the corrector and the (slightly less standard) definition
of the flux corrector.

\medskip

{\bf Assumptions on the ensemble of coefficient fields}.
Our two assumptions on the space of (admissible) coefficient fields $a$
are pointwise boundedness and uniform ellipticity. Without loss of generality,
we may assume that the bound is unity:
\begin{equation}\label{f.56}
|a(x)\xi|\le|\xi|\quad\mbox{for all}\;\xi\in\mathbb{R}^d\;\mbox{and}\;x\in\mathbb{R}^d.
\end{equation}
We require uniform ellipticity with constant $\lambda>0$ only in the integrated form of
\begin{equation}\label{f.40}
\int\nabla\zeta\cdot a\nabla\zeta\ge\lambda\int|\nabla\zeta|^2\quad
\mbox{for all smooth and compactly supported}\;\zeta.
\end{equation}
This form of ellipticity is weaker than pointwise ellipticity for systems, and allows
one to consider linear elasticity tensors  $a$ that do not necessarily have a sign (so that the duality theory 
at the basis of \cite{Armstrong-Smart-14} does not automatically apply).
Throughout this paper, we use {\it scalar notation} for convenience.
However, we only use arguments that are available in the case of systems, that is,
when $\mathbb{R}$-valued functions $\zeta$ are replaced by fields with values in some
finite dimensional Euclidean space $H$. More precisely, we only use the energy estimate
and consequences thereof, like the Caccioppoli estimate and the higher integrability
coming from the hole-filling argument.
In particular, we do not appeal to De Giorgi's
theory. Clearly, in the case of systems, all constants acquire an additional
dependence on $H$.

\medskip

We now address the minimal assumptions on the ``ensemble'' $\langle\cdot\rangle$,
a probability measure on the space of (admissible) coefficient fields as introduced in \cite[Section~2]{Papanicolaou-Varadhan-79},
which will be assumed throughout the paper. They are related
to the operation of the shift group $\mathbb{R}^d$ on the space of coefficient fields, that is,
for any shift vector $z\in\mathbb{R}^d$ and any coefficient field $a$, the shifted field
$a(\cdot+z)\colon x\mapsto a(x+z)$ is again a coefficient field. 
The first assumption is \emph{stationarity}, which means that for any
shift $z\in\mathbb{R}^d$ the random coefficient fields $a$ and $a(\cdot+z)$
have the same (joint) distribution. 
The second assumption is \emph{ergodicity}, which means that any (integrable) random variable
$F(a)$ that is shift invariant, that is, $F(a(\cdot+z))=F(a)$ for all shift vectors $z\in\mathbb{R}^d$
and $\langle\cdot\rangle$-almost coefficient field $a$, is actually constant, that is $F(a)=\langle F \rangle$
for $\langle\cdot\rangle$-almost every coefficient field~$a$. 
 
\medskip

Under assumptions~\eqref{f.56}, \eqref{f.40}, stationarity, and ergodicity, homogenization (in the sense of Murat and Tartar's notion of H-convergence, see \cite{Murat-78}) holds (for Dirichlet boundary data in the
case of systems due to the weak notion of ellipticity), and the homogenized coefficients also satisfy 
\begin{equation*}
\int\nabla\zeta\cdot a_{\ho}\nabla\zeta\ge\lambda\int|\nabla\zeta|^2\quad
\mbox{for all smooth and compactly supported}\;\zeta 
\end{equation*}
and $|a_{\ho}\xi|\le \frac{(1+\lambda^2)^\frac12}{\lambda}|\xi|$. In particular, $a_\ho$ is uniformly elliptic in the scalar case,
\begin{equation}\label{si.30}
\xi\cdot a_{\ho}\xi\ge\lambda|\xi|^2\quad\mbox{and}\quad|a_{\ho}\xi|\le\frac{1}{\lambda}|\xi|
\quad\mbox{for all}\;\xi\in\mathbb{R}^d,
\end{equation}
and satisfies the Legendre-Hadamard condition in the case of systems. The proof of these statements is the same as the proof given by Murat and Tartar in the case of a second-order elliptic equation (e.g.~see \cite{Murat-78, Murat-97}). (In both cases, the elliptic operator $-\nabla \cdot a_\ho \nabla$ enjoys a full regularity theory.)

\medskip

{\bf Extended corrector}. 
Throughout this paragraph $i=1,\cdots,d$ denotes a coordinate
direction.
We recall the definition of the extended corrector $(\phi_i,\sigma_i)$ in the following lemma, the (rather standard) proof of which is displayed for the reader's convenience (see in particular   \cite[Section 7.2]{JKO-94}).
\begin{lemma}\label{si}
Let $\langle\cdot\rangle$ be stationary and ergodic. Then there exist two random tensor fields
$\{\phi_i\}_{i=1,\cdots,d}$ and $\{\sigma_{ijk}\}_{i,j,k=1,\cdots,d}$ with the following properties:
The gradient fields $\nabla\phi_i$ and $\nabla\sigma_{ijk}$ are stationary, by which we understand that for $\expec{\cdot}$-a.e.~$a$ and any shift vector $z\in\R^d$ we have $\nabla\phi_i(a;\cdot{+}z)=\nabla\phi_i(a(\cdot+z);\cdot)$ and $\nabla\sigma_{ijk}(a;\cdot{+}z)=\nabla\sigma_{ijk}(a(\cdot+z);\cdot)$ a.e.~in $\R^d$, 
and have bounded second moments and vanishing expectations:
\begin{equation}\label{si.2}
\langle|\nabla\phi_i|^2\rangle\le \frac{1}{\lambda^2},\quad
\sum_{j,k=1,\cdots,d}\langle|\nabla\sigma_{ijk}|^2\rangle
\le {4d}(\frac{1}{\lambda^2}+1),\quad
\langle\nabla\phi_i\rangle=\langle\nabla\sigma_{ijk}\rangle=0.
\end{equation}
Moreover, the field $\sigma$ is skew-symmetric in its last indices, that is,
\begin{equation}\label{f.19}
\sigma_{ijk}=-\sigma_{ikj}.
\end{equation}
Finally, we have for $\langle\cdot\rangle$-a.e.~$a$ the equations 
\begin{eqnarray}
-\nabla\cdot a(\nabla\phi_i+e_i)&=&0,\label{f.2}\\
\nabla\cdot\sigma_i&=&q_i,\label{f.5}\\
-\triangle\sigma_{ijk}&=&\partial_jq_{ik}-\partial_kq_{ij},\label{si.5}
\end{eqnarray}
in the distributional sense on $\R^d$
with $\{q_{ij}\}_{i,j=1,\cdots,d}$ given by
\begin{equation}\label{si.6}
q_i:=a(\nabla\phi_i+e_i)-a_{\ho}e_i,\qquad a_{\ho}e_i:=\expec{a(\nabla\phi_i+e_i)},
\end{equation}
where the (distributional) divergence of a tensor field is defined as
$(\nabla\cdot\sigma_i)_j:=\sum_{k=1}^d\partial_k\sigma_{ijk}$.
\qed
\end{lemma}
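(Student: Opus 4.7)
My plan is to follow the classical Papanicolaou--Varadhan construction of $\phi_i$ in the stationary framework, and then to produce the flux corrector $\sigma$ as a random vector potential for the stationary, divergence-free flux $q_i$.

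First I would construct $\phi_i$ by a massive regularization: for $T>0$, the equation $\tfrac{1}{T}\phi_{i,T}-\nabla\cdot a(\nabla\phi_{i,T}+e_i)=0$ admits a unique stationary $L^2(\langle\cdot\rangle)$ solution $\phi_{i,T}$, and testing with $\phi_{i,T}$ in the probabilistic sense, using the stationary form of~(\ref{f.40}), namely $\langle\nabla\zeta\cdot a\nabla\zeta\rangle\ge\lambda\langle|\nabla\zeta|^2\rangle$ for every stationary $\zeta$ with $\nabla\zeta\in L^2(\langle\cdot\rangle)$ (obtained by averaging~(\ref{f.40}) over a large ball and invoking stationarity), gives the uniform bound $\langle|\nabla\phi_{i,T}|^2\rangle\le 1/\lambda^2$. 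Extracting a weak limit $\nabla\phi_i$ in the Hilbert space of stationary potential gradient fields (a closed subspace of $L^2(\langle\cdot\rangle;\R^d)$) and using ergodicity to kill the massive contribution in the limit yields~(\ref{f.2}) together with stationarity of $\nabla\phi_i$, $\langle\nabla\phi_i\rangle=0$, and the first bound in~(\ref{si.2}); the potential $\phi_i$ itself is only defined up to an additive random constant, which is immaterial for the sequel.

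Next I would set $a_{\ho}e_i:=\langle a(\nabla\phi_i+e_i)\rangle$ and $q_i:=a(\nabla\phi_i+e_i)-a_{\ho}e_i$, so that $q_i$ is stationary with $\langle q_i\rangle=0$ and $\nabla\cdot q_i=0$ (which is~(\ref{f.2}) rewritten), and~(\ref{f.56}) combined with the first bound in~(\ref{si.2}) gives $\langle|q_i|^2\rangle\le\langle|\nabla\phi_i+e_i|^2\rangle=\langle|\nabla\phi_i|^2\rangle+1\le 1/\lambda^2+1$. I would then construct $\sigma_{ijk}$ by solving~(\ref{si.5}) componentwise: its right-hand side is the divergence of a stationary, mean-zero, $L^2$ tensor, so the same stationary Helmholtz machinery---now applied to the unperturbed $-\triangle$, which is strictly easier than in the $\phi_i$-step---produces $\sigma_{ijk}$ with stationary, mean-zero, $L^2$-gradient. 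Skew-symmetry~(\ref{f.19}) is inherited from the right-hand side. Testing~(\ref{si.5}) with $\sigma_{ijk}$ in the stationary sense yields $\langle|\nabla\sigma_{ijk}|^2\rangle=-\langle(\partial_j\sigma_{ijk})q_{ik}\rangle+\langle(\partial_k\sigma_{ijk})q_{ij}\rangle$, whence Cauchy--Schwarz gives $\langle|\nabla\sigma_{ijk}|^2\rangle\le 2(\langle q_{ij}^2\rangle+\langle q_{ik}^2\rangle)$; summing over $j,k=1,\dots,d$ and using the $L^2$-bound on $q_i$ then delivers the second bound in~(\ref{si.2}).

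Finally I would verify~(\ref{f.5}): applying $\sum_k\partial_k$ to~(\ref{si.5}) and commuting derivatives gives $-\triangle(\nabla\cdot\sigma_i)_j=\partial_j(\nabla\cdot q_i)-\triangle q_{ij}=-\triangle q_{ij}$, so the difference $w:=(\nabla\cdot\sigma_i)_j-q_{ij}$ is a stationary mean-zero random field with $\nabla w$ stationary $L^2$ and $\triangle w=0$; in the Papanicolaou--Varadhan framework this forces $\nabla w=0$ and hence $w=0$, which is exactly~(\ref{f.5}). The main obstacle I anticipate is the rigorous functional-analytic setup of the Helmholtz decomposition in the probability space---in particular the closedness of the subspace of stationary potential gradients in $L^2(\langle\cdot\rangle;\R^d)$ and the uniqueness statement used in the last step. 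This is classical in stochastic homogenization, but it is the only place where the ergodicity assumption genuinely enters, beyond providing $\langle\cdot\rangle$-almost sure identities.
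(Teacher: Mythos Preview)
Your approach is correct and close in spirit to the paper's, but the construction of $\sigma$ follows a genuinely different route. For $\phi_i$ you use massive regularization where the paper applies Lax--Milgram directly on the space $X$ of stationary curl-free mean-zero fields; these are interchangeable. For $\sigma$, however, the paper does \emph{not} solve~(\ref{si.5}) directly. Instead it introduces an auxiliary third-order tensor $b_{jkl}$ as the $L^2(\langle\cdot\rangle;\R^{d\times d}_{\sym})$-orthogonal projection of $q_jI$ onto the space $Y$ of curl-free symmetric matrix fields of vanishing expectation, proves the trace identities $\sum_k b_{jkk}=q_j$ and $\sum_k b_{kkj}=0$ by the density of $\{\mathrm{trace}\,D^2\zeta\}$ in mean-zero $L^2$, and then \emph{defines} $\partial_l\sigma_{jk}:=b_{jkl}-b_{kjl}$. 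With this definition, (\ref{f.5}) and (\ref{si.5}) drop out as algebraic consequences of the curl-free and symmetry properties of $b$, with no Liouville-type argument needed. Your route---solve~(\ref{si.5}) first, then recover~(\ref{f.5})---is more direct and avoids the auxiliary tensor, at the price of needing a uniqueness argument at the end.

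One point to tighten in your last step: you assert that $w:=(\nabla\cdot\sigma_i)_j-q_{ij}$ has $\nabla w$ stationary in $L^2$, but $q_{ij}$ involves the coefficient field $a$, which need not be differentiable, so $\nabla q_{ij}$ is not available. The conclusion $w=0$ is nonetheless correct: $w$ itself is stationary, mean-zero, and in $L^2(\langle\cdot\rangle)$, and a short computation using the curl-free property of $\nabla\sigma_{ijk}$ and $\nabla\cdot q_i=0$ shows $\langle(\mathrm{trace}\,D^2\zeta)\,w\rangle=0$ for all $\zeta\in H^2(\Omega)$. Ergodicity then gives density of $\{\mathrm{trace}\,D^2\zeta\}$ in $\{f\in L^2:\langle f\rangle=0\}$, hence $w=0$. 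This is precisely the mechanism the paper uses (for its trace identities on $b$), so your instinct that ergodicity is the crux of the last step is right---just replace the $\nabla w\in L^2$ claim by this density argument.
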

In the rest of this paper we use the abbreviations $\phi=(\phi_1,\cdots,\phi_d)$,
$\sigma=(\sigma_{ijk})_{i,j,k=1,\dots,d}$, and $\phi_\xi=\sum_{i=1}^d\xi_i\phi_i$ for $\xi\in\R^d$.

\subsection{Regularity theory and the minimal radius}

In the Euclidean context, the $C^{1,\alpha}$-seminorm of a function
measures its local deviation from linear functions.
As is customary in the $C^{1,\alpha}$-theory based on energy estimates, 
that deviation is measured in the $L^2$-sense at the level of gradients, giving
rise to Campanato spaces that are equivalent to H\"older spaces. 
We name this expression ``excess'', cf.~(\ref{o.8}), in (linear) analogy to the
quantity in the regularity theory for minimal surfaces introduced by De Giorgi,
\cite[Teorema 3.3]{DGCP-72}. In the context
of homogenization, it is natural to replace the space of linear functions (which is
$d$-dimensional once one factors out constants) by 
the $d$-dimensional set of harmonic coordinates, that is, $\{x\mapsto
\xi\cdot x+\phi_\xi(x)\}_{\xi\in\mathbb{R}^d}$.
We therefore define for any square-integrable vector field $g$ and ball $B$ the excess as
\begin{equation}\label{o.8}
  \Exc(g;B):=\inf_{\xi\in\R^d}\fint_B|g-(\xi+\nabla\phi_\xi)|^2,
\end{equation}
which measures the deviation of $g$ from $a$-linear functions on $B$.
The theorem below shows that for an $a$-harmonic function $u$, $\Exc(\nabla u;B)$ 
can be controlled provided the corrector is well-behaved in the sense that
$(\phi,\sigma)$ has sufficiently small linear growth in a spatially averaged, but quantitative, sense.
To make this precise, we associate to a given constant $C>0$ a random variable $r_*$ defined by the expression
\begin{equation}\label{O.1}
    r_*:=\inf\Big\{r>0 \,\Big|
    \ \forall R\ge r\,:\,
    \frac{1}{R^2}\fint_{B_R}|(\phi,\sigma)-\fint_{B_R}(\phi,\sigma)|^2\le\frac{1}{C}\Big\},
\end{equation}
with the understanding that $r_*=\infty$ if the set is empty.
Since the extended corrector $(\phi,\sigma)$ exists in all directions $\xi\in\R^d$ almost surely for ergodic coefficients, we implicitly assume above and in the sequel (and in particular for deterministic estimates) that $a$ belongs to the set of full measure of coefficients for which the extended corrector, or at least its stationary gradient on which $r_*$ only depends, is well-defined.

\medskip

In the rest of the article, we write $C$ for a generic positive constant that may change from line to line in the statements and in the proofs (unless otherwise stated), 
and display its dependence upon the parameters of the problem in the form of $C=C(\cdot)$ (e.g.~$C=C(d,\lambda)$ if $C$ only depends on the dimension $d$ and the ellipticity ratio $\lambda$).


\begin{theorem}\label{O}
For any H\"older exponent $\alpha\in(0,1)$ there exists a constant $C(d,\lambda,\alpha)<\infty$ 
with the following properties: Let $r_*$ \emph{(the ``minimal radius associated with $\alpha$'')} be defined by \eqref{O.1} with $C=C(d,\lambda,\alpha)$.
Let $u\in H^1(B_R)$ with $R\geq r_*$ denote an $a$-harmonic function in $B_R$, that is,
\begin{equation}\label{p.12}
-\nabla\cdot a\nabla u=0\quad\mbox{in}\;B_R.
\end{equation}
Then we have ``excess decay'' in the sense of
\begin{equation}\label{o.6}
  \forall r\in[r_*,R],\quad\Exc(\nabla u;B_r)\;\le\; C(d,\lambda,\alpha)\,(\frac{r}{R})^{2\alpha}\,\Exc(\nabla u;B_R).
\end{equation}
Moreover the correctors enjoy the following non-degeneracy property
\begin{equation}\label{p.22}
 \forall r\ge r_*, \forall \xi \in\R^d, \quad \frac{1}{2}|\xi|^2\le\fint_{B_r}|\xi+\nabla\phi_\xi|^2
\le C(d,\lambda,\alpha)|\xi|^2.
\end{equation}
Finally, we have the mean-value property (for which $\alpha>0$ can be fixed, say, $\alpha=\frac{1}{2}$)
\begin{equation}\label{o.30}
  \forall r\in[r_*,R],\quad\fint_{B_r}|\nabla u|^2\;\le\; C(d,\lambda)\,\fint_{B_R}|\nabla u|^2.
\end{equation}
\qed
\end{theorem}

The regularity result provided by Theorem~\ref{O} is ``quenched'', that is, entirely deterministic 
in the sense that the smallness condition is expressed in terms of the
given ``realization'' of $(\phi,\sigma)$. (In case of thermal randomness, one would speak
of a ``pathwise result''.)  
On the one hand, qualitative ergodicity implies that for almost every coefficient field 
the ``minimal radius'' $r_*$ is finite, see \eqref{l.10} in the proof of Corollary~\ref{c}. On the other hand, mild quantitative ergodicity conditions imply that $r_*$ is a random variable with streched exponential moments. The latter is the content of our second main result, see Theorem~\ref{g} below. 
\begin{remark}
Let us compare Theorems~\ref{O} and~\ref{g} (see below) to the results of Armstrong \& Smart in \cite{Armstrong-Smart-14}.
The random field $\mathcal{Y}$ of \cite[Theorem~1.2]{Armstrong-Smart-14}, which plays a similar role as $r_*$ in terms of ``Lipschitz-regularity'' (cf.~\eqref{o.30} in Theorem~\ref{O}),  is defined there as the smallest radius from which on an \emph{algebraic} decay (arbitrarily small, yet fixed) holds, cf.~\cite[Lemma~5.1]{Armstrong-Smart-14}. 
First, our weaker \emph{quantitative smallness} condition  \eqref{O.1} allows one to treat the borderline case of qualitative ergodicity (for which no algebraic decay is available in general) and to avoid any loss of stochastic integrability.
In particular, for coefficients that satisfy a standard Logarithmic Sobolev Inequality, \cite[Theorem~1.2]{Armstrong-Smart-14} would essentially take the form $\expec{\exp(\mathcal Y^{d-\e})}<\infty$ for any $\e>0$ whereas Theorem~\ref{g} yields $\expec{\exp(\frac1C r_*^d)}\le 2$ for some $0<C<\infty$ large enough.
Second, \cite[Theorem~1.2]{Armstrong-Smart-14} is not formulated using harmonic coordinates, and the estimate that essentially corresponds to \eqref{o.6} only holds on ``mesoscales'' and for exponents $0\le \alpha<\beta$ (where $\beta$ depends on $\lambda$ and $d$), that is typically for $r\in [R^{c\alpha},R]$
for some $0<c<1$ instead of $[r_*,R]$.
\qed
\end{remark}
%


A fairly easy consequence of Theorem~\ref{O} in form of (\ref{o.6}) is the Liouville property
for sub{\it quadratic} functions stated in Corollary~\ref{c}. This partially answers to the affirmative 
a specific version of a question raised
in \cite[Question 5, p.33]{BCKY-11} 
on whether the dimension of the space of $a$-harmonic functions of a given
growth exponent agrees with the dimension in the Euclidean case.
The answer is partial, because only subquadratic growth is treated, and deals with a special case,
because only the case of uniformly elliptic coefficient fields is treated. In the 
even more special case of periodic 
coefficient fields the answer is affirmative for all growth rates
\cite{Avellaneda-Lin-89}. Our qualitative result
holds, as it should, under the purely qualitative condition of ergodicity.

\begin{corollary}\label{c}
Let $\langle\cdot\rangle$ be stationary and ergodic. Then for $\langle\cdot\rangle$-a.e.~coefficient
field $a$, the following Liouville property holds: Suppose that $u$ is $a$-harmonic,
that is $-\nabla\cdot a\nabla u=0$ in all $\mathbb{R}^d$, and that it grows subquadratically in the sense
that there exists an exponent $\alpha<1$ such that
\begin{equation}\label{l.11}
\lim_{R\uparrow\infty}R^{-2(1+\alpha)}\fint_{B_R}u^2=0.
\end{equation}
Then $u$ is $a$-linear in the sense that there exists $(c,\xi)\in\mathbb{R}\times\mathbb{R}^d$ such that
\begin{equation}\label{l.12}
u(x)=c+\xi\cdot x+\phi_\xi(x)\quad\mbox{for Lebesgue-a.\ e.}\;x\in\mathbb{R}^d.
\end{equation}
\qed
\end{corollary}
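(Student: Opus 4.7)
The plan is to reduce matters to a single application of the excess decay (\ref{o.6}) of Lemma~\ref{O} on concentric balls $B_r \subset B_R$ with $r \ge r_*$ fixed and $R \uparrow \infty$. For this to be meaningful, I first need to know that the deterministic smallness hypothesis (\ref{O.1}) holds for $\langle\cdot\rangle$-a.\ e.\ $a$ from some finite (random) dyadic scale $r_*(a)$ onwards. Since by Lemma~\ref{si} the extended corrector $(\phi,\sigma)$ has a stationary, square-integrable and mean-free gradient, the classical argument of Papanicolaou--Varadhan and Kozlov, based on Birkhoff's ergodic theorem applied to $\nabla(\phi,\sigma)$ together with a Poincar\'e-type inequality in the probability space, yields the $L^2$-sublinearity
$$
\lim_{r\uparrow\infty}\frac{1}{r^2}\fint_{B_r}\Big|(\phi,\sigma)-\fint_{B_r}(\phi,\sigma)\Big|^2=0\quad\text{for }\langle\cdot\rangle\text{-a.\ e.\ }a,
$$
which is precisely what (\ref{O.1}) demands.

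Next I would control the excess of $u$ on large balls. Fixing any exponent $\alpha'\in(\alpha,1)$, using that the choice $\xi=0$ in (\ref{o.8}) is admissible, applying the Caccioppoli inequality to the $a$-harmonic function $u$ on $B_{2R}$, and invoking the growth assumption (\ref{l.11}),
$$
\mathrm{Exc}(R)\;\le\;\fint_{B_R}|\nabla u|^2\;\le\;\frac{C(d,\lambda)}{R^2}\fint_{B_{2R}}u^2\;=\;o(R^{2\alpha})\quad\text{as }R\uparrow\infty.
$$
Applying (\ref{o.6}) with the exponent $\alpha'$ and with $R$ any dyadic radius larger than a fixed dyadic $r\ge r_*$,
$$
\mathrm{Exc}(r)\;\le\;C(d,\lambda,\alpha')\Big(\frac{r}{R}\Big)^{2\alpha'}\mathrm{Exc}(R)\;\le\;C(d,\lambda,\alpha')\,r^{2\alpha'}\,R^{-2(\alpha'-\alpha)}\,o(1),
$$
which tends to zero as $R\uparrow\infty$ since $\alpha'>\alpha$. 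Hence $\mathrm{Exc}(r)=0$ for every dyadic $r\ge r_*$.

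To conclude, I would exploit the non-degeneracy estimate (\ref{p.22}). By definition of the excess, for every dyadic $r\ge r_*$ there exists $\xi_r\in\mathbb{R}^d$ such that $\nabla u=\xi_r+\nabla\phi_{\xi_r}$ a.\ e.\ on $B_r$; the lower bound in (\ref{p.22}) applied to the difference shows that $\xi_r$ is the unique such vector, and hence, by restriction to smaller balls, $\xi_r$ is independent of $r\ge r_*$. Call this common value $\xi$. Since $\bigcup_{r\ge r_*}B_r=\mathbb{R}^d$, we have $\nabla u=\xi+\nabla\phi_\xi$ a.\ e.\ on $\mathbb{R}^d$; integration produces a constant $c\in\mathbb{R}$ and yields the representation (\ref{l.12}).

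The main obstacle in this plan is the qualitative input that the $L^2$-Campanato-type quantity in (\ref{O.1}) tends to zero under \emph{mere} stationarity and ergodicity, without any quantitative mixing. For the scalar corrector $\phi$ this is standard qualitative stochastic homogenization; for the flux corrector $\sigma$ the argument is parallel because Lemma~\ref{si} provides the same stationarity, square-integrability, and vanishing expectation for $\nabla\sigma$. Everything else is a deterministic consequence of Lemma~\ref{O}, exploiting only the excess decay (\ref{o.6}) and the non-degeneracy (\ref{p.22}).
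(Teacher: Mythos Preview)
Your proposal is correct and follows essentially the same route as the paper: establish almost-sure sublinearity of $(\phi,\sigma)$ to trigger (\ref{O.1}), pass from (\ref{l.11}) to $\mathrm{Exc}(R)=o(R^{2\alpha})$ via Caccioppoli, use the excess decay (\ref{o.6}) to force $\mathrm{Exc}(r)=0$ for all $r\ge r_*$, and conclude via non-degeneracy. Two minor remarks: the auxiliary exponent $\alpha'>\alpha$ is unnecessary since (\ref{o.6}) with the given $\alpha$ already yields $\mathrm{Exc}(r)\le C\,r^{2\alpha}R^{-2\alpha}\cdot o(R^{2\alpha})=r^{2\alpha}o(1)$; and where you invoke ``Birkhoff plus Poincar\'e'' for the sublinearity, the paper gives a self-contained argument combining von~Neumann's mean ergodic theorem with the maximal ergodic theorem and a two-scale comparison (\ref{l.1}), which is a bit more than the standard spatial ergodic theorem since one needs almost-sure smallness of the normalized $L^2$-oscillation of the \emph{potential} of a stationary gradient field.
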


\medskip

The next corollary establishes a $C^{1,1-}$-a priori estimate for $a$-harmonic functions 
similar to the one for plain harmonic functions. 
There are two restrictions: As expected from Theorem~\ref{O}, such an estimate only holds 
on scales that are large with respect to the minimal radius $r_*$, see (\ref{m.17}). 
Moreover, it only holds for an {\it effective} gradient
which is the projection of the microscopic gradient onto $a$-linear
functions, a projection localized at the level of the minimal radius $r_*$, cf.\  (\ref{m.15}). 

\begin{corollary}\label{co}
Let $\alpha\in(0,1)$ be given and let $r_*$ denote the associated minimal radius (see Theorem~\ref{O}). We denote by $r_*(a,x):=r_*(a(\cdot+x))$ the stationary extension of the minimal radius, so that
\begin{equation}\label{m.17}
\forall\;x\in\mathbb{R}^d,\;\;\forall r\ge r_*(x),  \quad
\frac{1}{r^2}\fint_{B_r(x)}|(\phi,\sigma)-\fint_{B_r(x)}(\phi,\sigma)|^2\,\le\,
\frac{1}{C(d,\lambda,\alpha)} .
\end{equation}
For any $a$-harmonic function $u$ in a ball $B_R$,
cf.~(\ref{p.12}), consider the vectors $\xi_+$ and $\xi_-$ characterized by
\begin{equation}\label{m.15}
\fint_{B_{r_*(\pm x)}(\pm x)}|\nabla u-(\xi_\pm+\nabla\phi_{\xi_\pm})|^2
=\Exc(\nabla u;B_{r_*(\pm x)}(\pm x)),
\end{equation}
which we think of as the effective gradient of $u$ in $x$ and $-x$ at scale $r_*$,
respectively. Then we have, provided $R\ge 8\max\{|x|,r_*(\pm x)\}$,
\begin{equation}\label{m.11}
|\xi_+-\xi_-|^2
\le C(d,\lambda,\alpha)(\frac{\max\{|x|,r_*(\pm x)\}}{R})^{2\alpha}\Exc(\nabla u;B_R).
\end{equation}
\qed
\end{corollary}
Another application of Theorem~1 are intrinsic Schauder estimates for elliptic systems in divergence form.
\begin{corollary}[Large-scale Schauder estimates]\label{cor:Lip}
Let a H\"older exponent $\alpha\in(0,1)$ be given and let $r_*$ denote the minimal radius  (\ref{O.1}) associated 
with the constant $C=C(d,\lambda,\alpha')$ of Theorem~\ref{O} for some fixed $\alpha'\in(\alpha,1)$. Below the notation $\lesssim$ stands for $\le C$ for a generic multiplicative 
constant $C$ that only depends on $d$, $\lambda$, and $\alpha$.
Let the function $u$ (with square-integrable gradient) and the 
(square-integrable) vector fields $g$ and $h$ on $B_R$ be related by
\begin{equation}\label{cw51}
-\nabla\cdot a(\nabla u+g)=\nabla\cdot h\qquad\text{in }B_R.
\end{equation}
Then we have  
\begin{align}\label{eq:SE:1}
  \lefteqn{\sup_{r\in[r_*,R]}   \frac1{r^{2\alpha}}  \Exc(\nabla u+g;B_{r})}\nonumber\\
&\lesssim\frac{1}{R^{2\alpha}} \Exc(\nabla u+g;B_{R})
+\sup_{r\in [r_*,R]}\frac1{r^{2\alpha}}\fint_{B_r}(|g-\fint_{B_r}g|^2+|h-\fint_{B_r}h|^2).
\end{align}
If $R=+\infty$, we obtain
\begin{equation}\label{eq:SE:3}
\sup_{r\ge r_*}\frac1{r^{2\alpha}}  \Exc(\nabla u+g;B_{r})
\lesssim\sup_{r\ge r_*}\frac1{r^{2\alpha}}\fint_{B_r}(|g-\fint_{B_r}g|^2+|h-\fint_{B_r}h|^2)
\end{equation}
(with the understanding that $\nabla u+g$ is square-integrable on $\R^d$).
For later purpose, we also state the following extension of \eqref{o.30}
\begin{align}\label{eq:SE:2}
\lefteqn{\sup_{r\in[r_*,R]}      \fint_{B_{r}}|\nabla u+g|^2}\nonumber\\
&\lesssim
\fint_{B_{R}}|\nabla u+g|^2 + \sup_{r\in [r_*,R]}
\big(\frac{R}{r}\big)^{2\alpha}\fint_{B_r}(|g-\fint_{B_r}g|^2+|h-\fint_{B_r}h|^2).
\end{align}
\qed
\end{corollary}

A last application of  Theorem~\ref{O} are the following large-scale Calder\'on-Zygmund estimates, 
which are an improved form of a result proved in the first version of \cite{DGO}.
\begin{corollary}[Large-scale Calder\'on-Zygmund estimates]\label{cor:Lp}
Let $C_0=C(d,\lambda,\frac12)<\infty$ be the constant associated with $\alpha=\frac12$ in Theorem~\ref{O}.
There exists a $\felix$-Lipschitz stationary field $\ru$ that satisfies
$r_*(C_0)\le \ru \le r_*(\Felix^{d+2}C_0)$ (where $r_*(C)$ is the minimal radius associated with
the constant $C$ as defined in \eqref{O.1}) and such that for any suitably decaying 
scalar field $u$ and vector field $g$ related in $\R^d$ by
\begin{equation}\label{l1-0}
-\nabla \cdot a \nabla  u=\nabla \cdot g,
\end{equation}
and any exponent $1< p<\infty$, we have 
\begin{equation}\label{I1}
\left(\int\Big(\fint_{B_*(x)}|\nabla u|^2\Big)^\frac{p}{2}dx\right)^{\frac1p}\,\lesssim\,
\left(\int\Big(\fint_{B_*(x)}|g|^2\Big)^\frac{p}{2}dx\right)^{\frac1p},
\end{equation}
where $B_*(x):=B_{\underline{r_*}(x)}(x)$, and  $\lesssim$ means $\le C$ for a constant only depending on $d$, $\lambda$, $p$.
\qed
\end{corollary}
There is nothing particular in the definition of $C_0=C(d,\lambda,\alpha)$, and we may choose $C_0=C(d,\lambda,\alpha)$ for any $\alpha>0$ so that large-scale
$C^{1,\alpha}$-regularity also holds on scales $R\ge \ru$ (which we use in the proof of Corollary~\ref{cor:Lp} only in the form of the mean-value property).
For applications in \cite{GNO-quant} and \cite{DGO-Gaus}, we shall need weighted versions of these large-scale Calder\'on-Zygmund estimates --- which
are, as opposed to standard weighted Calder\'on-Zygmund estimates (e.g.~\cite[Chapter~V, Section~3]{Stein}),
restricted to a very specific class of weights (although for these specific weights, the condition $0\le \gamma < d(p-1)$ below is equivalent 
to the condition $\omega \in A_p$, the correct Muckenhoupt class).
\begin{corollary}[Large-scale weighted Calder\'on-Zygmund estimates]\label{cor:Lp-weight}
Let $2\le p<\infty$, $\gamma<d(p-1)$, and $\omega=\omega(r)>0$ satisfy
\begin{align}\label{EM08}
\omega(r)\le\omega(R)\le(\frac{R}{r})^\gamma\omega(r)\quad\mbox{for all}\;r\le R.
\end{align}
In the notation of Corollary~\ref{cor:Lp}, 
\begin{equation}\label{EM10}
{\Big(\int\omega(|x|+\underline{r_*}(0))\big(\fint_{B_{*}(x)}|\nabla u|^2\big)^\frac{p}{2}dx\Big)^\frac{1}{p}}
\lesssim
\Big(\int\omega(|x|+\underline{r_*}(0))\big(\fint_{B_{*}(x)}|g|^2\big)^\frac{p}{2}dx\Big)^\frac{1}{p},
\end{equation}
where $\lesssim$ means $\le C$ with $C$ only depending on $d$, $\lambda$, $p$, and $\gamma$.
\qed
\end{corollary}
Note that a  weaker (and unweighted) version of Corollary~\ref{cor:Lp} is proved in \cite{AD}: However,  there is a loss in the exponent in the LHS (and only $p\ge 2$ is addressed).
Loosely speaking, Corollaries~\ref{cor:Lip} \&~\ref{cor:Lp} state that from the minimal radius $r_*$ onwards,
one is in the regime of $C^{1,\alpha}$- and $L^p$-maximal regularity, respectively. 
For higher-order regularity, we refer the reader to the subsequent work~\cite{Fischer-Otto-15,Fischer-Otto-17} by Fischer and the third author.
\begin{remark}\label{rem:duality}
At the price of including the adjoint of the extended corrector  (that is, the extended corrector associated with
the pointwise tranpose coefficient field $a^*$) in the definition of $r_*$, one may w.l.o.g.~assume that
the above regularity theory holds for both the operators $-\nabla \cdot a\nabla$ and $-\nabla \cdot a^*\nabla$. 
The estimates of $r_*$ obtained in this article remain unchanged since pointwise transposition is a local linear operation that does not
change statistical properties.
This will be used in the proofs of \eqref{I1} (for the range $1<p\le 2$) and  Corollary~\ref{cor:Lp-weight} for which we argue by duality.
\qed
\end{remark}

\medskip

We close this section by stating the main ingredient for Theorem~\ref{O}; namely Proposition~\ref{F}. It relies on the following observation:
Harmonic functions $u$ have the property that for all radii $r\le R$
there exists a $\xi\in\mathbb{R}^d$ (in fact, $\xi=\nabla u(0)$ or $\xi=\fint_{B_r}\nabla u$ will do) such that 
\begin{equation}\label{F.2}
\fint_{B_r}|\nabla u-\xi|^2\le C(d)(\frac{r}{R})^2\fint_{B_R}|\nabla u|^2.
\end{equation}
Proposition~\ref{F} establishes a perturbation of (\ref{F.2}) for $a$-harmonic functions,
provided the affine function $x\mapsto\xi\cdot x$ is replaced by its
$a$-harmonic version $x\mapsto\xi\cdot x+\phi_\xi(x)$, and 
where the perturbation is controlled by the amount of linear growth of the corrector $(\phi,\sigma)$.

\begin{proposition}\label{F}
There exists an exponent $\e=\e(d,\lambda)>0$ with the following property.
Let $u$ be an $a$-harmonic function in a ball $B_R$ with radius $R>0$, 
cf.~(\ref{p.12}).
Then for all $r\le R$, there exists a vector $\xi\in\mathbb{R}^d$ such
that 
\begin{equation}\label{F.1}
\fint_{B_r}|\nabla u-(\xi+\nabla\phi_\xi)|^2
\,\le\, C(d,\lambda)
\Big((\frac{r}{R})^2+\delta^{2\e}(\frac{R}{r})^{d+2}\Big)\fint_{B_R}|\nabla u|^2,
\end{equation}
where we have set for abbreviation
\begin{equation}\label{p.21}
  \delta:=\frac{1}{R}\Big(\fint_{B_R}|(\phi,\sigma)-\fint_{B_R}(\phi,\sigma)|^2 \Big)^\frac12.
\end{equation}
Moreover, we have the following non-degeneracy property
\begin{equation}\label{p.20}
\frac1{C(d,\lambda)}(1-C(d)\delta) |\xi|\le
\Big(\fint_{B_{\frac R2}}|\xi+\nabla\phi_\xi|^2\Big)^\frac12 \le
C(d,\lambda)(1+\delta)|\xi|\quad\mbox{for all}\;\xi\in\mathbb{R}^d.
\end{equation}
\qed
\end{proposition}

\begin{remark}\label{rem:AL}
Theorem~\ref{O} and its main ingredient, Proposition~\ref{F}, should be compared to the work of Avellaneda \& Lin, more
precisely, to \cite[Section 3.1]{Avellaneda-Lin-87}: Like in~\eqref{F.1}, the distance between $\nabla u$ and $\xi+\nabla\phi_\xi$ for a suitable 
$\xi$ (there, it is given by the spatial average of $\nabla u$) is monitored --- however, on an $L^\infty$ instead of an $H^1$-level, see \cite[Lemma 14]{Avellaneda-Lin-87}, 
which is the analogue of Proposition~\ref{F}. Like for Proposition~\ref{F}, \cite[Lemma~14]{Avellaneda-Lin-87} is a perturbation of an estimate for the 
(constant-coefficients) homogenized operator. In fact, \cite[Lemma~14]{Avellaneda-Lin-87} does not use periodicity in an explicit way, but only H-convergence 
of the elliptic operator $-\nabla\cdot a\nabla$ (see \cite{Murat-78,Murat-97}), in its scaled-down version,
to the homogenized limit $-\nabla\cdot a_{\ho}\nabla$. More precisely, it uses an upgraded version of H-convergence, where the
solutions converge in $L^\infty$, an upgrade which in case of scalar equations may be obtained by appealing to the uniform H\"older
regularity of $a$-harmonic functions (De Giorgi's result) and which in \cite[Section 2.2]{Avellaneda-Lin-87} is obtained in the system's
case by first deriving a $C^{0,\alpha}$-estimate by a similar strategy to the $C^{1,\alpha}$-estimate. Incidentally, \cite[Lemma~14]{Avellaneda-Lin-87}
also uses implicitly the sublinear growth of the corrector $\phi$. The main new ingredient in Proposition~\ref{F}
is a \emph{quantification of H-convergence} (which is a purely qualitative concept) in terms of the \emph{sublinear growth of $\phi$ and
$\sigma$}. 
This also requires a suitable cut-off argument since we want to use the {\it
whole}-space corrector $(\phi,\sigma)$ and thus need to introduce a boundary layer.
The passage from Proposition~\ref{F} to Theorem~\ref{O} mimics the
passage from \cite[Lemma 14]{Avellaneda-Lin-87} to \cite[Lemma~15]{Avellaneda-Lin-87}.  Note that, in contrast to our work, \cite{Avellaneda-Lin-87} assumes smoothness of $a$ which helps handle the small scales.
\qed
\end{remark}

\medskip


In view of Theorem~\ref{O} and Corollaries~\ref{cor:Lip} and~\ref{cor:Lp}, it is clearly of interest to control the size
of the stationary random field $r_*$, which is almost surely finite 
under mere ergodicity of the coefficients, cf.~the proof of Corollary~\ref{c}. In order to obtain a quantitative
control, one needs to make quantitative assumptions.

\subsection{Control of the minimal radius}

In this section $\alpha\in(0,1)$ is fixed and we denote by $r_*$ the associated minimal radius (see Theorem~\ref{O}), so that the mean-value property~\eqref{o.30} for gradients of
$a$-harmonic functions $u$ on $B_R$ holds for balls centered at the origin and of radius larger than $r_*$:
\begin{equation}\label{o37}
\forall\;r\in[r_*,R],\quad\fint_{B_r}|\nabla u|^2\le C(d,\lambda,\alpha)\fint_{B_R}|\nabla u|^2.
\end{equation}
We shall also use the $C^{1,\alpha}$-Schauder estimate from Corollary~\ref{cor:Lip}.

\medskip

We consider two extreme situations on the statistics of $\expec{\cdot}$:
\begin{itemize}
\item Strong decay of correlation that leads to the best integrability of $r_*$ one can expect, cf. Theorem~\ref{g};
\item Arbitrarily slow decay of correlation that leads to weaker (typically stretched exponential or algebraic) integrability of $r_*$, cf. Theorems~\ref{gbis} and~\ref{gbis2}.
\end{itemize}
These results are proved using a mixing condition in the form of functional inequalities,  which ensure strong nonlinear concentration properties (typically stronger than other more standard
mixing conditions).
We split the rest of this section into two parts.
In the first part we specialize to standard functional inequalities, state Theorem~\ref{g}, and describe the general structure of the proof in that setting.
In the second part, we address more general fields based on multiscale functional inequalities (as first introduced 
using non-uniform partitions coarsening away from the origin in the first version of this article, and recently reformulated and extensively 
studied in the form of the multiscale inequalities we use here in \cite{DG1,DG2}), which allows us to treat most coefficient fields considered in the applied
sciences (cf.~e.g.~\cite{Torquato-02}). The structure of the proof is similar for Theorems~\ref{g},~\ref{gbis}, and~\ref{gbis2}.
As a general rule, in the actual proofs of these results, we first focus on standard functional inequalities so that the core argument appears as clear as possible,
and only later on address the multiscale case.

\subsubsection{Control of the minimal radius using standard functional inequalities}

By definition, controlling the minimal radius $r_*$ means controlling the sublinear growth of the corrector $(\phi,\sigma)$.
The sublinear growth of the corrector (a key element to most
homogenization results) is a result of  the cancellations coming from $\langle\nabla(\phi,\sigma)\rangle=0$,
which due to ergodicity translates into $\lim_{r\uparrow\infty}\fint_{B_r}\nabla(\phi,\sigma)=0$, cf.\ the proof of Corollary~\ref{c}.
The quantification of this relies on two distinct ingredients:
\begin{itemize}
\item On the one hand, one needs good {\it locality} properties of  
$\frac{1}{r^2}\fint_{B_r}|(\phi,\sigma)-\fint_{B_r}(\phi,\sigma)|^2$, and thus of $(\phi,\sigma)$.
By this it is meant
that the solution $(\phi,\sigma)$ of the elliptic system (\ref{f.2}) \& (\ref{si.5})
at some point $x$ depends only weakly on the coefficient field $a$ far away from that point.
To establish this locality, we shall use the modified extended corrector $(\phi_T,\sigma_T)$, cf. \eqref{o14}--\eqref{o17} below, 
and relate the sublinear growth of $(\phi,\sigma)$ to that of $(\phi_T,\sigma_T)$, cf.~Proposition~\ref{p1}.
\item On the other hand, one needs good {\it mixing} properties of the ensemble $\langle\cdot\rangle$
of random coefficient fields $a$. By this it is meant that the random value of $a$ at some point $x$ 
statistically depends only weakly on its values far away. For that purpose we appeal to the \textit{multiscale functional inequalities} (MFI) introduced by Duerinckx and the first author in \cite{DG1}.
\end{itemize}
We start by recalling the standard logarithmic Sobolev inequality, cf.~\cite{Ledoux-97,Ledoux-01}.  In what follows, $\sup$ is a shorthand notation for the essential supremum.
\begin{definition}\cite[Section~2]{DG1}\label{def:sLSI}
For all $\ell\ge 0$ and $x\in \R^d$, denote by  $B_{\ell+1}(x)$ the ball centered at $x\in \R^d$ and of radius $\ell+1$.
We consider two types of derivative for a function $F$ on the space of coefficient fields $a$:
\begin{itemize}
\item $|\partial^{\fun}_{x,\ell+1} F|$  denotes  the $L^1(B_{\ell+1}(x))$-norm of the functional (or Malliavin) derivative of $F$ with respect to $a$, that is
$$
|\partial^{\fun}_{x,\ell+1} F|\,:=\,\int_{B_{\ell+1}(x)} |\frac{\partial F}{\partial a(z)}|dz.
$$
\item $|\partial^{\osc}_{x,\ell+1} F|$ denotes the (essential) oscillation\footnote{To make this quantity measurable, the supremum and infimum in the oscillation have to be slightly modified, see \cite{DG1} based on \cite{Barron}. This is not essential in this article since we shall bound such oscillations by measurable quantities.} of $F$ with respect to the restriction of $a$ on $B_{\ell+1}(x)$, that is,
$$
|\partial^{\osc}_{x,\ell+1} F(a)|\,:=\, \sup\big\{F(a')- F(a'')\,:\,a'=a''=a\text{ in }\R^d\setminus B_{\ell+1}(x)\,\big\}.
$$
\end{itemize}
We say that $\expec{\cdot}$ satisfies a standard Logarithmic Sobolev Inequality (LSI) if there exists $\kappa>0$ such that for all random variables $F$ we have
\begin{equation}
\forall L\geq 1,\quad \Ent_L(F):=\expec{F^2\log F^2}_L-\expec{F^2}_L\expec{\log F^2}_L \leq \frac1\kappa \expec{ \int_{\R^d} |\partial^{\fun/\osc}_{x,1} F|^2 dx}_L,\label{e:def-sLSI}
\end{equation}
where $\expec{\cdot}_L$ denotes the ensemble scaled by $L$, i.e.~for rescaled coefficient fields $a(L\cdot)$.
\qed
\end{definition}
\begin{remark}\label{rem:scalingMFI}
Imposing \eqref{e:def-sLSI} for all $L\ge 1$ (instead of just $L=1$) is not restrictive: It is automatically met in the constructive approach of \cite{DG2}, and essentially follows
from the fact that mixing properties improve under such rescaling --- see \cite[Remark~2.2]{DG2} for details. 
\qed
\end{remark}
Under assumption~\eqref{e:def-sLSI}, we have the following result on the integrability of $r_*$.
\begin{theorem}\label{g}
Assume that $\langle\cdot\rangle$ is stationary and satisfies the standard Logarithmic Sobolev Inequality~\eqref{e:def-sLSI}.
For all $\alpha\in(0,1)$, consider $r_*$  defined in (\ref{O.1}) with constant $C(d,\lambda,\alpha)$.
Then there exists a positive constant $C=C(d,\lambda,\kappa,\alpha)$ such that
\begin{equation}\label{g.0}
\big\langle\exp\big({\textstyle\frac{1}{C}}  r_*^d \big)\big\rangle\le  2.
\end{equation}
\qed
\end{theorem}
The following remark shows that \eqref{g.0} is the best stochastic integrability one can hope for.
\begin{remark}
Consider the case of discrete stationarity in
the form of a Bernoulli random checkerboard and a
scalar equation. On a square of side length $R$, the probability to
approximate the coefficients the classical counter example
to (large-scale)  De Giorgi's regularity (e.g.~the quasi-conformal mapping of \cite[Section~12.1]{Gilbarg-Trudinger-98} in dimension 2) is at least $(\frac12)^{R^{d}}$,
in which case $r_*$ has to be larger than $R$. As already observed
by Armstrong and Smart in \cite{Armstrong-Smart-14}, this directly
implies that $\expec{\exp(\frac{1}{C}g(r_*))}=\infty$ if $g\ge 0$ is
such that $\liminf_{r\to \infty}\frac{g(r)}{r^d}=\infty$. On the
other hand, the fact that \eqref{g.0} holds owes to the large
constant $0<C<\infty$. Indeed, this constant quantifies the fraction
of the subset of coefficients that do not satisfy the
Lipschitz regularity at size $R$, which is best seen by writing \eqref{g.0} in form of
$
\expec{I(r_*\ge R)}\,\leq \, 2 \exp(-\frac{1}{C}R^d).
$
In the case of the random checkerboard, this quantifies the ratio
between the number of ``bad" coefficients on a square of size $R$ and the total
number $2^{R^d}$ of realizations: It 
does not exceed $2\big(\exp(-\frac{1}{C})\big)^{R^d}$.
\qed 
\end{remark}

Let us now describe the main ingredients to the proof of Theorem~\ref{g}, which relies on the one hand on five deterministic results, and on the other hand on (nonlinear) concentration properties.
We start with the five deterministic ingredients to the proof of Theorem~\ref{g}, namely
\begin{itemize}
\item Proposition~\ref{p1}, which states that the sublinear growth (in a locally square-averaged form) of the extended corrector is controlled by the sublinear growth of the \textit{modified} extended corrector
$(\phi_T,\sigma_T)$ introduced below;
\item Proposition~\ref{p1bis}, which shows that  the {modified} extended corrector can be controlled by the \emph{average} of the (squared) $H^{-1}$-norm of field
and flux of a \emph{more localized} modified extended corrector $(\phi_t,\sigma_t)$;
\item Proposition~\ref{p1ter}, which shows that this squared $H^{-1}$-norm is indeed approximately local -- a property that we use to control fluctuations of its spatial averages by concentration of measure (cf.~Lemma~\ref{lem:concLSI});
\item Lemma~\ref{L:cw}, which allows to control the $H^{-1}$-norm of field and flux by their  spatial averages, on the level of second stochastic moments;
\item Lemma~\ref{lem:sensitivity}, which establishes a deterministic sensitivity estimate for these spatial averages.
\end{itemize}
The modified extended corrector $(\phi_T,\sigma_T)$ associated with a fixed direction $e$ and a cut-off scale $\sqrt T\geq 1$ is defined as the unique solution to
\begin{eqnarray}\label{o14}
\frac{1}{T}\phi_T-\nabla\cdot a(\nabla\phi_T+e)&=&0,
\\
\label{o16}
q_T&:=&a(\nabla\phi_T+e),\\
\label{o17}
\frac{1}{T}\sigma_T-\triangle\sigma_T&=&\nabla\times q_T,
\end{eqnarray}
(in the distributional sense on $\R^d$) in the class
\begin{equation*}
\Big\{v\in H^1_\loc(\R^d)\,:\, \sup_{x\in \R^d} \int_{B_1(x)} |v|^2+|\nabla v|^2<\infty\,\Big\},
\end{equation*}
where $B_1(x)$ denotes the unit ball centered at $x$.
In \eqref{o17}  and in the rest of this paper, we use the notation $\nabla\times$ inspired by the case of $d=3$ for the differential operator in the RHS of \eqref{si.5}.
Note that the ``massive'' term $\frac1T\phi_T$ introduces a length scale $R=\sqrt{T}$ that plays a role in the sequel.
These whole-space problems are well-posed for all coefficient fields on a purely deterministic level (see e.g.~\cite[Appendix~A.1]{Gloria-Otto-10b}). Moreover,
as it is well-known in homogenization, if $\expec{\cdot}$ is stationary and ergodic, then $\expec{\cdot}$-almost surely  $(\nabla\phi,\nabla\sigma)$ can be recovered as the weak limit as $T\uparrow\infty$ in $L^2_\loc(\R^d)$ of $(\nabla\phi_T,\nabla\sigma_T)$, while $\frac1T \phi_T$ and $\frac1T \sigma_T$ converge to zero in $L^2_\loc(\R^d)$
(see for instance Step~1 in the proof of Proposition~\ref{p1}).

\medskip

Proposition~\ref{p1} below states that the sublinear growth of the extended
corrector $(\phi,\sigma)$ and thus $r_*$ is controlled by the sublinear growth of the modified extended corrector
$\{(\phi_T,\sigma_T)\}_T$, provided both are slightly quantified.
\begin{proposition}\label{p1}
Suppose that for some exponent $\nu>0$ and radius $r_{**}$ we have
for all dyadic $R=2^k$
\begin{align}\label{gr05}
\fint_{B_R}\frac{1}{T}|(\phi_T,\sigma_T)|^2\le(\frac{r_{**}}{R})^{2\nu}\quad\mbox{for all}\;R\ge r_{**}\;\mbox{and}\;T=R^2.
\end{align}
Then there exists a constant $C=C(d,\lambda,\alpha,\nu)$ such that 
\begin{align}\label{gr01}
\frac{1}{R^2}\fint_{B_R}|(\phi,\sigma)-\fint_{B_R}(\phi,\sigma)|^2\le C(\frac{r_{**}}{R})^{2\nu}
\quad\mbox{for all}\;R\ge r_{**};
\end{align}
furthermore,
\begin{align}\label{gr02}
r_*\le Cr_{**}.
\end{align}
\qed
\end{proposition}
The following Proposition \ref{p1bis} relates the sublinear growth of the modified extended corrector $(\phi_T,\sigma_T)$
to the smallness of a negative norm of the corresponding field $\nabla\phi_T$ and flux $q_T=a(\nabla\phi_T+e)$. 
More precisely, the negative norm monitors the fluctuations
$q_T-\langle q_T\rangle$ (note that $\nabla\phi_T-\langle\nabla\phi_T\rangle=\nabla\phi_T$).
Whereas the (homogeneous) $H^{-1}$-norm of $\nabla\phi_T$ is conveniently given by the $L^2$-norm of $\phi_T$ itself, 
for the negative norm of $q_T-\langle q_T\rangle$ we introduce the vector field
\begin{align}\label{cw01}
\frac{1}{T}g_T-\triangle g_T=\frac{1}{\sqrt{T}}(q_T-\langle q_T\rangle)
\end{align}
and take the $L^2$-norm of $(g_T,\sqrt{T}\nabla g_T)$ as a version of
the $H^{-1}$-norm of $q_T-\langle q_T\rangle$ with a cut-off for scales $\gtrsim\sqrt{T}$ (the latter being
important for the locality property).
The normalization in (\ref{cw01}) is chosen such that $g_T$ and $\phi_T$ live on the same footing.
The point of Proposition \ref{p1bis} is that the sublinear growth of $(\phi_T,\sigma_T)$ is controlled
by negative norms of $(\nabla\phi_t,q_t-\langle q_t\rangle)$ for any $t\le T$.
It is convenient to introduce a scale of exponential averaging functions 
\begin{align}\label{cw15}
\omega_T(x):={\textstyle\frac{1}{|\partial B_1|(d-1)!}}\frac{1}{(C\sqrt{T})^d}\exp(-\frac{|x|}{C\sqrt{T}}),
\end{align}
with a constant $C=C(d,\lambda)$ chosen such that the localized energy estimate for
$\frac{1}{T}-\nabla\cdot a\nabla$ holds, see (\ref{cw16}) below --- for some estimates, we shall further need to increase the constant $C$
without changing notation.

\begin{proposition}\label{p1bis}
For all $0<t\le T$ we have
\begin{align}\label{cw02}
\int\omega_T\frac{1}{T}|(\phi_T,\sigma_T)|^2\le C(d,\lambda)
\int\omega_T\big(\frac{1}{t}\phi_t^2+\frac{1}{t}|g_t|^2+|\nabla g_t|^2\big).
\end{align}
\qed
\end{proposition}
Proposition \ref{p1ter} establishes the locality of the RHS integrand $F_t$ in (\ref{cw02}).
More precisely, it considers local averages $F_t$ on scale $\sqrt{t}$ of the integrand, cf.~(\ref{cw03}). 
By locality of such a random variable, i.e.~a function(al) $F_t=F_t(a)$ of the coefficient field $a$,
we understand that it essentially does not depend on $a$ at distances $\gg\sqrt{t}$. 
Proposition \ref{p1ter} establishes that this is true up to an exponential error, see (\ref{cw04}).
\begin{proposition}\label{p1ter}
For all $t>0$ consider the function $F_t=F_t(a)$ given by
\begin{align}\label{cw03}
F_t:=\int\omega_t\big(\frac{1}{t}\phi_t^2+\frac{1}{t}|g_t|^2+|\nabla g_t|^2\big).
\end{align}
Then for every $\lambda$-uniformly elliptic coefficient field $a$ we have
\begin{align}\label{cw20}
|F_t(a)|\le C(d,\lambda),
\end{align}
and $F_t$ is approximately $\sqrt t$-local in the sense that for all balls $B_R$
and all $\lambda$-uniformly elliptic coefficient fields $a,a'$ we have
\begin{equation}\label{cw04}
  \begin{aligned}
    &a=a'\text{ in }B_R\ \Rightarrow\ |F_t(a)-F_t(a')|\le
    C(d,\lambda)\exp\big(-\frac{1}{C(d,\lambda)}\frac{R}{\sqrt{t}}\big).
  \end{aligned}
\end{equation}
\qed
\end{proposition}
It remains to provide the deterministic elements for the estimate of the expectation of
$F_t$ defined in (\ref{cw03}), which by stationarity is given by
$\langle\frac{1}{t}\phi_t^2+\frac{1}{t}|g_t|^2+|\nabla g_t|^2\rangle$. 
The following lemma shows that this truncated version of the $H^{-1}$-norm of the field/flux pair $(\nabla\phi_t,q_t-\langle q_t\rangle)$ can be estimated by spatial averages $((\nabla \phi_T)_{*t},(q_T)_{*t}-\expec{q_T})$
of  $(\nabla \phi_T,q_T-\expec{q_T})$, where for a field $f$ we denote by $f_{*t}$ the convolution by a Gaussian of variance $t$ (and thus length-scale $\sqrt{t}$). 
Because of the semi-group property, it is indeed convenient to take spatial averages by convolving with Gaussians.
\begin{lemma}\label{L:cw}
For all $T>0$ we have
\begin{equation}\label{cw06}
{\langle\frac{1}{T}\phi_T^2+\frac{1}{T}|g_T|^2+|\nabla g_T|^2\rangle}\,\le\, C(d)
\frac{1}{T}\int_0^T\langle|(\nabla\phi_{T})_{*t}|^2+|(q_{T})_{*t}-\langle q_T\rangle|^2\rangle dt.
\end{equation}
\qed
\end{lemma}
The next result is a (suboptimal) sensitivity estimate for the RHS of~\eqref{cw06}.
\begin{lemma}\label{lem:sensitivity}
There exist an exponent $\e=\e(d,\lambda)>0$ (coming from hole-filling) and a constant $C= C(d,\lambda)$ such that for all $1\leq t\leq T$ we have 
\begin{equation}\label{e.lem:sensitivity}
(\ell+1)^{-d}{\int_{\R^d} \Big|\partial^{\fun/\osc}_{x,\ell+1}\big(\nabla\phi_T,q_T\big)_{*t}\Big|^2dx}\,\leq\, C \Big(\frac{\sqrt T}{\sqrt t}\Big)^d (\frac{\ell+1}{\sqrt{T}}\wedge 1)^{\e d}.
\end{equation}
\qed
\end{lemma}
We conclude with the stochastic arguments.
Equipped with Lemmas~\ref{L:cw} and~\ref{lem:sensitivity}, we obtain using LSI the following control of the expectation of $F_t$:
\begin{corollary}\label{c.fluct.T-LSI}
Let $\langle\cdot\rangle$ be stationary and satisfy the standard Logarithmic Sobolev Inequality~\eqref{e:def-sLSI}.
Then there exists a constant $C=C(d,\lambda,\kappa)$ such that for all ${T}>0$ we have
\begin{equation} \label{o8-LSI}
\langle\frac{1}{T}\phi_T^2+\frac{1}{T}|g_T|^2+|\nabla g_T|^2\rangle\,\leq\, CT^{-\e},
\end{equation}
where $\e=\e(d,\lambda)>0$ is defined in Lemma~\ref{lem:sensitivity}.
\qed
\end{corollary}
We finally recall a result to control fluctuations of random variables that behave like simple averages (cf.~\cite{DG1}, see also \cite{Ledoux-97,Ledoux-01}).
\begin{lemma}[Concentration for averages]\cite[Proposition~4.3]{DG1}\label{lem:concLSI}
Assume that $\langle\cdot\rangle$ is stationary and satisfies the standard Logarithmic Sobolev Inequality~\eqref{e:def-sLSI}. Let $t\ge 1$ and let $F_t$ denote a bounded random variable
that is approximately $\sqrt{t}$-local in the sense of \eqref{cw20} and~\eqref{cw04}.
Then there exists a positive constant $C=C(d,\lambda,\kappa)$ such that 
we have for all $\delta>0$ and $T\geq t$
\begin{equation}\label{e.app-concLSI}
\expec{I\Big(\Big|\int \omega_T(x)F_t(a(\cdot+x))\,dx-\expec{F_t}\Big|>\delta\Big)}\, \le \, \exp\Big(-\frac {\delta^2} {C} \big(\frac{\sqrt{T}}{\sqrt{t}}\big)^{d}\Big).
\end{equation}
\qed
\end{lemma}
The proof of Theorem~\ref{g} is now as follows. By Proposition~\ref{p1}, $r_*$ is controlled by $r_{**}$, a minimal radius
based on the modified corrector.
Averages of the modified corrector in turn can be controlled by averages of an even more localized quantity, cf.~Proposition~\ref{p1bis}.
On the one hand, by Proposition~\ref{p1ter}, this quantity is local enough to apply Lemma~\ref{lem:concLSI} to control the size of its fluctuations. 
On the other hand, its expectation is controlled by Corollary~\ref{c.fluct.T-LSI} using Lemma~\ref{L:cw}.
By a union bound argument, this yields moment bounds for $r_{**}$, and therefore for $r_*$.

\subsubsection{Control of the minimal radius using multiscale functional inequalities}

In this paragraph we extend Theorem~\ref{g} to more general situations where the coefficient field
is more strongly correlated. We shall prove two results (Theorems~\ref{gbis} and~\ref{gbis2}) that address the general case of multiscale functional inequalities with functional derivative and oscillation. 
We start with the definitions of these functional inequalities.
\begin{definition}\cite[Definition~2.2]{DG1}\label{def:MFI}
Let $\pi:\R_+\to \R_+$ be an integrable function. For $L\ge 1$ we denote by $\expec{\cdot}_L$ the scaled ensemble.
\begin{itemize}
\item We say that $\expec{\cdot}$ satisfies a multiscale Logarithmic Sobolev Inequality (MLSI) for the weight $\pi$ with the functional derivative/oscillation if for all random variables $F$ we have
\begin{equation}\label{e:def-MLSI}
  \forall L\geq 1,\  \Ent_L(F)\,\leq\, \expec{  \int_0^\infty \pi(\ell)(\ell+1)^{-d}\int_{\R^d} |\partial^{\fun/\osc}_{x,\ell+1} F|^2 dxd\ell}_L.
\end{equation}
\item We say that $\expec{\cdot}$ satisfies a multiscale covariance inequality (MCI) for the weight $\pi$ with the oscillation  if for all random variables $F,G$ we have
\begin{align}
  \forall L\geq 1,\ &\covL{F}{G}\ :=\ \expec{(F-\expec{F}_L)(G-\expec{G}_L)}_L\nonumber
  \\ &\quad \leq \,     \int_0^\infty \pi(\ell)(\ell+1)^{-d}\int_{\R^d} \expec{  |\partial^{\osc}_{x,\ell+1} F|^2}_L^\frac12\expec{  |\partial^{\osc}_{x,\ell+1} G|^2}^\frac12_L  dxd\ell.\label{e:def-MCI}
\end{align}
\item We say that $\expec{\cdot}$ satisfies a multiscale spectral gap (MSG) for the weight $\pi$ with the oscillation  if for all random variables $F$ we have
\begin{align}
  \forall L\geq 1,\ &\varL{F}\ :=\ \expec{F^2}_L-\expec{F}_L^2\nonumber \\
  &\quad \leq\,   \expec{ \int_0^\infty \pi(\ell)(\ell+1)^{-d}\int_{\R^d}   |\partial^{\osc}_{x,\ell+1} F|^2dxd\ell}_L.\label{e:def-MSG}
\end{align}
\end{itemize}
\qed
\end{definition}
\begin{remark}
Note that both \eqref{e:def-MLSI} (for the oscillation) and \eqref{e:def-MCI} imply \eqref{e:def-MSG}.
If an ensemble satisfies MLSI with a compactly supported weight, then it satisfies the standard LSI (after suitable rescaling of space).
\qed
\end{remark}
The weight $\pi$ typically is related to the covariance function of $\expec{\cdot}$ (see in particular \eqref{e.weight-Gaussian} below for details).
In order to state our main result on the integrability of $r_*$, we need to introduce some further quantities.
Given a weight $\pi$, we consider the antiderative $\gamma$ of $-\pi$ defined by
\begin{subequations}
\begin{equation}\label{m2}
\gamma(\ell):=\int_{\ell}^\infty \pi(s)\,ds,
\end{equation}
and assume that it decays at least algebraically in the sense that there exist $0<\beta\ll 1$ and $C(\beta)$ (both depending only on $\pi$) such that
\begin{equation}\label{min.decay}
\gamma(\ell)\le C(\beta) (\ell+1)^{-\beta}\qquad\text{and}\qquad\liminf\limits_{\ell\to\infty}\frac{\gamma(\ell \theta)}{\gamma(\ell)}\geq \theta^{-\beta} \text{ for all }\theta\in(0,1).
\end{equation}
We finally introduce the function
\begin{equation}\label{m_*}
\pi_*(r):=\Big(\fint_{B_r}\gamma(|x|)\,dx\Big)^{-1},
\end{equation}
\end{subequations}
which,  as the following result shows, captures the stochastic integrability of $r_*$ for  MLSI with functional derivative.
\begin{theorem}\label{gbis}
Assume that $\langle\cdot\rangle$ is stationary and satisfies the  MLSI \eqref{e:def-MLSI}  for the functional derivative
with integrable weight $\pi$, where $\pi$ satisfies \eqref{m2} \& \eqref{min.decay}, and let
$\pi_*$ be as in~\eqref{m_*}.
For all $\alpha\in(0,1)$, consider $r_*$  defined in (\ref{O.1}) with constant $C(d,\lambda,\alpha)$.
Then there exists a positive  constant $C=C(d,\lambda,\pi,\alpha)$  such that 
\begin{equation}
\big\langle\exp\big({\textstyle\frac{1}{C}}  \pi_*(r_*) \big)\big\rangle\le 2.
\label{g.0bis}
\end{equation}
\qed
\end{theorem}
Before we turn to MFI with the oscillation, let us present the prototypical example of Gaussian coefficient fields.
Apply a local pointwise Lipschitz nonlinear transform to possibly several independent copies of a stationary 
Gaussian scalar field with covariance function $|c(x)|\le  \gamma(|x|)$ where $\gamma$ is non-increasing and decays at least algebraically at infinity. Then, relying on the Brascamp-Lieb inequality,  one can prove (\cite[Section~3.1]{DG2})  that the ensemble $\expec{\cdot}$ is stationary and satisfies MLSI \eqref{e:def-MLSI} with weight
\begin{equation}\label{e.weight-Gaussian}
\pi(\ell)\sim -\gamma'(\ell).
\end{equation}
We then abusively say that $\expec{\cdot}$ is Gaussian with correlation function $\gamma$.
The application of Theorem~\ref{gbis} to this example takes the following form.
\begin{corollary}\label{coro:Gauss}
Let $\expec{\cdot}$ be Gaussian with correlation function $\gamma(\ell)= (\ell+1)^{-\beta}$
for some $\beta>0$. Then  
\begin{equation}\label{e.pi*-Gaus}
\pi_*(r) \sim
\begin{cases}
     (r+1)^{\beta}&\beta<d,\\
     {(r+1)^d}\log^{-1}(r+2)&\beta=d,\\
     (r+1)^{d}&\beta>d,
\end{cases}
\end{equation}
and \eqref{g.0bis} thus takes the form
\begin{itemize}
\item subcritical case $\beta>d$: 
$\big\langle\exp\big({\textstyle\frac{1}{C}} r_*^d \big)\big\rangle\le 2$;
\item critical case $\beta=d$:  $\big\langle\exp\big({\textstyle\frac{1}{C}} \frac{r_*^d}{\log(r_*+1)} \big)\big\rangle\le 2$;
\item supercritical case  $0<\beta<d$: $\big\langle\exp\big({\textstyle\frac{1}{C}} r_*^\beta \big)\big\rangle\le 2$,
\end{itemize}
for some positive constant $C=C(d,\lambda, \beta,\alpha)$.
\qed
\end{corollary}
This corollary directly follows from Theorem~\ref{gbis} provided we prove \eqref{e.pi*-Gaus}, which
is itself a consequence of the definition~\eqref{m_*} of $\pi_*$ and of the elementary calculation 
$$
\fint_{B_r}\gamma(|x|)\,dx\, =\,r^{-d}  \int_0^r(\ell+1)^{-\beta}\ell^{d-1}d\ell
\sim
\begin{cases}
     (r+1)^{-\beta}&\beta<d,\\
     (r+1)^{-d}\log (r+2)&\beta=d,\\
     (r+1)^{-d}&\beta>d.
\end{cases}
$$
Note that a stronger version of Corollary~\ref{coro:Gauss} was obtained by Fischer and the third author in \cite{Fischer-Otto-17}
in the range $0<\beta\ll 1$. There, by a more direct use of the Brascamp-Lieb inequality, the same stochastic integrability (up to the optimal iterated logarithm)
as in Corollary~\ref{coro:Gauss} is established for a minimal radius $r_{**}$ that satisfies \eqref{gr01} and \eqref{gr02} for $\nu=1$.

\medskip

For MFI with the oscillation, the control of the moments of $r_*$ is weaker than for  MFI with the functional derivative.
In terms of stochastic integrability of $r_*$,  MLSI yields stronger results than  MCI which in turn yields stronger results than  MSG (for the same weight).
\begin{theorem}\label{gbis2}
Assume that $\langle\cdot\rangle$ is stationary.
Let $\pi$ be an integrable weight and  let $\gamma$ be as in~\eqref{m2}.
For all $\alpha\in(0,1)$, consider $r_*$  defined in (\ref{O.1}) with constant $C(d,\lambda,\alpha)$.
Then there exists a positive constant $C=C(d,\lambda,\pi,\alpha)$ such that
\begin{itemize}
\item if $\gamma$ has algebraic decay at infinity, and $\langle\cdot\rangle$ satisfies the MSG \eqref{e:def-MSG},
then
\begin{equation}
\big\langle {\textstyle\frac{1}{C}} \gamma(r_*)^{-1} \big\rangle\le 2;
\label{g.1bis}
\end{equation}
\item if $\gamma (\ell)\le C_\pi \exp(-\frac1{C_\pi} \ell^\beta)$ for some $\beta>0$, and $\langle\cdot\rangle$ satisfies the MSG \eqref{e:def-MSG}, then
\begin{equation}
\big\langle \exp\big({\textstyle\frac{1}{C}}r_*^{\beta \wedge \frac d2}\big)\big\rangle\le 2;
\label{g.2bis}
\end{equation}
\item if $\gamma (\ell)\le C_\pi \exp(-\frac1{C_\pi} \ell^\beta)$ for some $\beta>0$, and $\langle\cdot\rangle$ satisfies the MCI \eqref{e:def-MCI}, then
\begin{equation}
\big\langle \exp\big({\textstyle\frac{1}{C}}r_*^{(\beta \wedge \frac d2) \vee \frac{\beta d}{\beta+d}}\big)\big\rangle\le 2;
\label{g.3bis}
\end{equation}
\item if $\gamma (\ell)\le C_\pi\exp(-\frac1{C_\pi} \ell^\beta)$ for some $\beta>0$, and $\langle\cdot\rangle$ satisfies the MLSI \eqref{e:def-MLSI} for the oscillation, then
\begin{equation}
\big\langle \exp\big({\textstyle\frac{1}{C}}r_*^{\beta \wedge d}\big)\big\rangle\le 2.
\label{g.4bis}
\end{equation}
\end{itemize}
\qed
\end{theorem}
Before we turn to the ingredients to the proof of Theorems~\ref{gbis} and~\ref{gbis2}, let us present prototypical examples of coefficient fields satisfying MLSI, MCI or MSG with the oscillation (borrowed from \cite[Section~3]{DG2}).
The first example considers \textbf{random tessellations} (RT) and is given by 
\begin{equation}\label{example:Voronoi}
a(x)=\sum_i \lambda_i \Id I(x\in V_i), 
\end{equation}
where $I$ is the indicator function, $\lambda_i$ are i.i.d.~random variables that take values in $[\lambda,1]$, and $V_i$ are the Voronoi cells associated with a Poisson point process (RT-PPP) of fixed intensity or with the random
parking measure (RT-RPM), see \cite{Penrose-01}.
In the case of RT-PPP (resp. RT-RPM), by \cite[Proposition~3.2]{DG2} (resp.  \cite[Proposition~3.3]{DG2}), $\expec{\cdot}$ satisfies MLSI, cf.~\eqref{e:def-MLSI}, with $\pi(\ell)\sim  \exp(-\frac{\ell^d}{C})$ (resp.~with $\pi(\ell)\sim  \exp(-\frac{\ell}{C})$), which yields
$\gamma(\ell)\sim \exp(-\frac{\ell^d}{C})$ (resp.~$\gamma(\ell)\sim \exp(-\frac{\ell}{C})$).
The second example considers \textbf{random inclusions} (RI) and consists of a constant background coefficient field perturbed by random inclusions centered at the points of a Poisson point process of fixed intensity with i.i.d.~random radii. More precisely, if $\{z_k\}_k$ denotes the Poisson points, and $r_k$ denotes the radius of the inclusion $B_{r_k}(z_k)$ centered at $z_k$, we consider the inclusion set $\calI:=\cup_{k} B_{r_k}(z_k)$, and might for instance define the coefficient field as
\begin{equation}\label{example:Inclusions}
a(x)= \lambda \Id +(1-\lambda) \Id \,  I(x\in  \calI).
\end{equation}
Let $\Gamma(\ell):=\expec{I(\ell-\frac12 \le r_1 \le \ell+\frac12)}$ for $\ell\ge 0$ and $\Gamma(\ell):=0$ for $\ell<0$ be the distribution function of the radii on $\R$.
The, by \cite[Proposition~3.4]{DG2},  there is some $C=C(d)>0$ such that $\expec{\cdot}$ satisfies the MCI, cf.~\eqref{e:def-MCI}, with weight
\begin{equation*}
\pi(\ell) \sim \sup_{|\ell'|\le \frac1C} (\ell+\ell')^d \Gamma(\ell+\ell').
\end{equation*}
We refer to this example as (RI-PPP).
If instead of the Poisson point process we consider inclusions centered at the points of the random parking measure (RI-RPM), then, by \cite[Proposition~3.4]{DG2},  $\expec{\cdot}$ satisfies the MCI with weight
\begin{equation*}
\pi(\ell)\sim  \sup_{|\ell'|\le \frac1C} (\ell+\ell')^d \Gamma(\ell+\ell') + \exp(-\frac1C \ell),
\end{equation*}
where the additional term comes from the correlations of the points (of the RPM) themselves.
Finally, if $\Gamma$ has compact support (that is, if the radii are uniformly bounded), then $\expec{\cdot}$ satisfies MLSI with the same weights as above.
Corollary~\ref{rem:compare} below applies Theorem~\ref{gbis2} to these examples (the proof, which is obvious, is omitted).
Next to these results, we display what can be proved using linear concentration argument based on the associated $\alpha$-mixing. In most of these examples the stochastic integrability of $r_*$ implied by the  MFI are strictly stronger than what would follow from the associated $\alpha$-mixing conditions (cf.~\cite[Remark~4.6]{DG1} for the spatial averages of the coefficient field itself).
\begin{corollary}\label{rem:compare}
{\color{white} rien}
\begin{enumerate}[(i)]
\item For \emph{RT-PPP} and \emph{RI-PPP} with uniformly bounded radii, \eqref{g.4bis} in Theorem~\ref{gbis2} takes the form
  \begin{equation*}
    \expec{\exp(\frac1C r_*^d)}\le 2,\qquad \text{while $\alpha$-mixing yields }\expec{\exp(\frac1C r_*^\frac{d}2)}\le 2.
  \end{equation*}
  For \emph{RT-RPM}, \eqref{g.2bis} in Theorem~\ref{gbis2} takes the form
  \begin{equation*}
    \expec{\exp(\frac1C r_*)}\le 2,\qquad\text{while $\alpha$-mixing yields }\expec{\exp(\frac1C r_*^\frac{d}{d+1})}\le 2.
  \end{equation*}
\item For the example of random inclusions \eqref{example:Inclusions} with random radii
and distribution function $\overline\Gamma:\R_+\to [0,1]$, Theorem~\ref{gbis2} takes the form
\begin{enumerate}
\item If $\overline \Gamma(\ell) \stackrel{\ell\gg 1}{\sim} \ell^{-d-1-\beta}$ for some $\beta>0$, then we have $\Gamma(\ell)\sim (\ell+1)^{-d-1-\beta}$, $\pi(\ell) \sim (\ell+1)^{-1-\beta}$, $\gamma(\ell)\sim (\ell+1)^{-\beta}$, and 
\eqref{g.1bis} yields
\begin{equation*}
\expec{\frac1C r_*^\beta}\le 2 \text{ (same with $\alpha$-mixing)}.
\end{equation*}
\item If $\overline\Gamma(\ell) \stackrel{\ell\gg 1}{\sim} e^{-\frac1{C} \ell^\beta}$ for some $\beta,C>0$, 
then $\Gamma(\ell)\sim e^{-\frac1C \ell^\beta}$, and \eqref{g.3bis} yields
\begin{eqnarray*}
\emph{RI-PPP}&:&\expec{\exp(\frac1C r_*^{\big(\beta \wedge \frac{d}{2}\big)\vee \frac{\beta d}{\beta+d}})}\le 2
\text{, while $\alpha$-mixing yields }\expec{\exp(\frac1C r_*^{\frac{\beta d}{\beta+d}})}\le 2;
\\
\emph{RI-RPM}&:&\expec{\exp(\frac1C r_*^{ \beta \wedge 1})}\le 2\text{, while $\alpha$-mixing yields }\expec{\exp(\frac1C r_*^{\frac{(\beta\wedge 1) d}{\beta\wedge 1+d}})}\le 2.
\end{eqnarray*}
\end{enumerate}
\end{enumerate}
\qed
\end{corollary}

\medskip

We finally turn to the proof of Theorems~\ref{gbis} and~\ref{gbis2}. The general structure is similar to that of Theorem~\ref{g}.
Two ingredients need to be refined: The concentration result of Lemma~\ref{lem:concLSI} and the control of 
the second moment of the extended corrector in Corollary~\ref{c.fluct.T-LSI}. 
We start with the former.
\begin{lemma}[Concentration for averages]\cite[Propositions~4.3 \&~4.5, and Remark~4.4]{DG1}\label{lem:conc}
Assume that  $\expec{\cdot}$ is stationary. Let $\pi$ denote an integrable weight satisfying \eqref{min.decay}, where  $\gamma$ and $\pi_*$ are defined in \eqref{m2} and \eqref{m_*}.
Let $t\ge 1$ and let $F_t$ denote a bounded random variable
that is approximately $\sqrt{t}$-local in the sense of \eqref{cw04}.
For all $T\ge t$,  set $(F_t)_T(a):=\int \omega_T(x)F_t(a(\cdot+x))\,dx$. Then there exists a positive constant $C=C(d,\lambda,\pi)$ 
such that for all $\delta>0$ and all $T\ge t$:
\begin{enumerate}[(i)]
\item if $\expec{\cdot}$ satisfies the MSG \eqref{e:def-MSG} with weight $\pi(\ell)\le C_\pi (\ell+1)^{-\beta-1}$ for some $\beta>0$, then 
\begin{equation}\label{e.app-conc3-0}
\expec{I\Big((F_t)_T-\expec{F_t}\ge \delta\Big)} \, \le \, C e^{-\delta} (1+\delta^{-2\frac{\beta}d}|\log \delta|) \big(\frac{\sqrt{t}}{\sqrt{T}}\big)^{-\beta};
\end{equation}
\item if $\expec{\cdot}$ satisfies the MSG \eqref{e:def-MSG} with weight $\pi(\ell)\le C_\pi \exp(-\frac1{C_\pi} \ell^\beta)$ for some $\beta>0$, then 
\begin{equation}\label{e.app-conc3}
\expec{I\Big((F_t)_T-\expec{F_t}\ge \delta\Big)}  \, \le \, \exp\Big(-\frac {\delta^2} {C} \big(\frac{\sqrt{t}}{\sqrt{T}}\big)^{\beta\wedge\frac d2}\Big);
\end{equation}
\item if $\expec{\cdot}$ satisfies the MCI \eqref{e:def-MCI} with weight $\pi(\ell)\le C_\pi \exp(-\frac1{C_\pi} \ell^\beta)$ for some $\beta>0$, then 
\begin{align*}
\expec{I\Big((F_t)_T-\expec{F_t}\ge \delta\Big)} \le C\exp\Big(-\frac{\delta^2}C\big(\frac{\sqrt{t}}{\sqrt{T}}\big)^{\beta \wedge \frac d2}-\frac{\delta^2}C\big(\big(\frac{\sqrt{t}}{\sqrt{T}}\big)\frac{1}{|\log \delta|+1}\big)^{\frac{d\beta}{d+\beta}}\Big);
\end{align*}
\item if $\expec{\cdot}$ satisfies the MLSI \eqref{e:def-MLSI} for the oscillation with weight $\pi(\ell)\le C_\pi \exp(-\frac1{C_\pi} \ell^\beta)$ for some $\beta>0$, then  
\begin{equation}\label{e.app-conc4}
\expec{I\Big((F_t)_T-\expec{F_t}\ge \delta\Big)} \, \le \, \exp\Big(-\frac {\delta^2} {C} \big(\frac{\sqrt{t}}{\sqrt{T}}\big)^{\beta\wedge d}\Big);
\end{equation}
\item if $\expec{\cdot}$ satisfies the MLSI \eqref{e:def-MLSI}  with weight $\pi$ for the functional derivative, then 
\begin{equation}\label{e.app-conc2}
\expec{I\Big((F_t)_T-\expec{F_t}\ge \delta\Big)} \, \le \, \exp\Big(-\frac{\delta^2} C \pi_*\big(\frac{\sqrt{t}}{\sqrt{T}}\big)\Big).
\end{equation}
\end{enumerate}
\qed
\end{lemma}
\begin{remark}
For $t=1$, the proof of Lemma~\ref{lem:conc} adapts the Herbst argument
 taking into account the specific properties of averages. For $t>1$ the  general statement reduces to the 
statement for $t=1$ by rescaling and using \eqref{e:def-MLSI}, \eqref{e:def-MCI}, or \eqref{e:def-MSG} with $L=\sqrt{t}$.  
\qed
\end{remark}
We conclude with the extension of Corollary~\ref{c.fluct.T-LSI}.
\begin{corollary}\label{c.fluct.T}
Assume that $\langle\cdot\rangle$ is stationary and satisfies 
the  MLSI  \eqref{e:def-MLSI} with the functional
derivative or the  MSG \eqref{e:def-MSG} with the oscillation for an integrable weight $\pi$ that satisfies \eqref{m2} \& \eqref{min.decay}.
Then there exist an exponent $\e=\e(d,\lambda,\pi)>0$ (depending on the hole-filling exponent) and a constant $C=C(d,\lambda,\pi)$ such that for all ${T}>0$ we have
\begin{equation} \label{o8}
\langle\frac{1}{T}\phi_T^2+\frac{1}{T}|g_T|^2+|\nabla g_T|^2\rangle \leq C  T^{-\e}.
\end{equation}
\qed
\end{corollary}
\begin{remark}
Corollaries~\ref{c.fluct.T-LSI} and~\ref{c.fluct.T} are the only places in our strategy of proof where the functional inequality is applied to a truly nonlinear random variable.
We refer the reader to \cite{GO-16} for a similar result that does not rely on functional inequalities.\qed
\end{remark}
%


\section{Proof of the regularity theory}

\subsection{Proof of Lemma~\ref{si}: Construction of correctors}

This proof is standard and essentially based on \cite[Section 7.2]{JKO-94}.
We could also argue by massive approximation, cf.~\eqref{o14}--\eqref{o17}.
The argument below relies on the Lax-Milgram theorem in the space of potential fields in probability.
Following  \cite{Papanicolaou-Varadhan-79} we define the ``horizontal derivatives'' $D_j$ as the generators of the $d$ (shift) $L^2(\Omega$)-semigroups. More explicitely, we set
\begin{align*}
&H^1(\Omega):=\{\zeta\in L^2(\Omega)\,:\,\lim_{h\rightarrow 0}\frac{1}{h}(\zeta(a(\cdot+he_j))-\zeta(a))\text{ exists as a limit in $L^2(\Omega)$ for all $j$}\,\},\\
&D_j:H^1(\Omega)\to L^2(\Omega),\qquad D_j\zeta:=\lim_{h\rightarrow 0}\frac{1}{h}(\zeta(a(\cdot+he_j))-\zeta(a)).
\end{align*}
In the following argument we suppress the index $i$ (which is fixed throughout the proof) in our notation for the tensor fields $\phi_i$, $\sigma_{ijk}$, and $q_{ij}$.

\medskip

\step{1} Construction of a potential field $g$ (playing the role of $\nabla\phi$) and of a solenoidal field $q$.

Consider the space of curl-free vector fields with vanishing expectation
\begin{equation}\nonumber
X:=\{g\in L^2(\Omega,\mathbb{R}^d)|D_jg_k=D_kg_j\;\mbox{distributionally},\;\langle g_j\rangle=0\}.
\end{equation}
This is a closed subspace of $L^2(\Omega,\R^{d})$. 
Because of stationarity,
$-D_j$ is the (formal) adjoint of $D_j$. By stationarity, ergodicity, and the density of
$\{D \phi|\phi\in H^1(\Omega)\}\subset X$ in $X$, (\ref{f.40}) translates into
\begin{equation}\label{g.30}
\forall g\;\in X\quad \langle g\cdot a(0) g\rangle\ge\lambda\langle|g|^2\rangle.
\end{equation}
By the Lax-Milgram theorem, there thus exists a unique
\begin{equation}\label{f.41}
g\in X\text{ such that }\forall \tilde g\;\in X\quad\langle \tilde g\cdot a(0)(g+e)\rangle=0.
\end{equation}
With help of \eqref{f.56} and (\ref{g.30}), we see that it satisfies the bound
\begin{equation}\label{si.4}
\langle|g|^2\rangle\le\frac{1}{\lambda^2}.
\end{equation}
Since $\{D \phi|\phi\in H^1(\Omega)\}\subset X$, (\ref{f.41}) implies in particular
\begin{equation}\label{f.16}
D\cdot a(0)(g+e)=0
\end{equation}
in a weak sense. 
We define the homogenized coefficients $a_{\ho}$ in direction $e$ as
\begin{equation}\label{f.42}
a_{\ho}e=\langle a(0)(g+e)\rangle.
\end{equation}
In particular, the random vector $q\in L^2(\Omega,\mathbb{R}^d)$
\begin{equation}\label{f.18}
q:=a(0)(g+e)-a_{\ho}e=a(0)(g+e)-\langle a(0)(g+e)\rangle,
\end{equation}
which we may think of as a flux correction, satisfies
\begin{equation}\label{f.13}
\langle|q|^2\rangle\le\frac{1}{\lambda^2}+1,\quad\langle q\rangle=0,
\quad D\cdot q=0,
\end{equation}
(the bound is seen as follows $\langle|q|^2\rangle\stackrel{(\ref{f.18})}{\le}\langle|a(0)(g+e)|^2\rangle
\stackrel{(\ref{f.56})}{\le}\langle|g+e|^2\rangle
=\langle|g|^2\rangle+1\stackrel{(\ref{si.4})}{\le}\frac{1}{\lambda^2}+1$, the $+1$ is the price to
pay for knowing (\ref{g.30}) only for $g$'s with $\langle g\rangle=0$)
which mimics the properties of the field correction, namely
\begin{equation}\nonumber
\langle|g|^2\rangle\le \frac{1}{\lambda^2},\quad\langle g\rangle=0,
\quad D_jg_{k}=D_k g_{j}.
\end{equation}

\medskip

\step{2} Construction of a curl-free matrix field $b$.

For the construction of $\sigma_{jk}$ we first introduce an auxiliary third-order tensor field $b=b_{jkl}$.
In this step, and throughout the article unless explicitly stated, we use the Einstein summation convention on repeated indices.
Let $\R_{sym}^{d\times d}$ denote the space of symmetric matrices and consider the space of curl-free {\it symmetric matrix} fields of vanishing expectation
\begin{equation}\label{f.12}
Y:=\{\tilde b\in L^2(\Omega,\mathbb{R}_{sym}^{d\times d})|
D_k\tilde b_{lm}=D_m\tilde b_{lk}\;\mbox{distributionally},\;\langle \tilde b_{kl}\rangle=0\},
\end{equation}
which is a closed subspace of $L^2(\Omega,\R_{\sym}^{d\times d})$. We denote by $b_j\in Y$ the $L^2(\Omega,\R^{d\times d}_\sym)$-orthogonal projection of the tensor field $q_j \Id$ onto $Y$, where 
$\Id$ denotes the identity matrix in $\R^{d\times d}$. As a projection,  $b_j$ satisfies the estimate
\begin{equation}\label{si.3}
  \langle |b_j|^2\rangle\le\langle|q_j \Id|^2\rangle=d\expec{q_j^2}.
\end{equation}
We claim that the third order tensor $b=b_{jkl}$ satisfies 
\begin{eqnarray}\label{f.10}
b_{jkk}&=&q_j,\\
  \label{f.17}
b_{kkj}&=&0.
\end{eqnarray}
We first prove \eqref{f.10}.
Define $H^2(\Omega)$ as the set of $H^1(\Omega)$ functions $\zeta$ such that $D \zeta \in H^1(\Omega,\R^d)$.
 Since $\{D^2\zeta|\zeta\in H^2(\Omega)\}\subset Y$, we have by orthogonality and the curl-free condition in the definition (\ref{f.12}) of $Y$
in form of $D_m b_{jkl}=D_k b_{jml}$ (that holds by symmetry and that we use for $m=l$),
\begin{multline*}
0=\langle D^2\zeta \cdot (b_j-q_j\Id)\rangle = \langle D_kD_l \zeta ( b_{jkl}-q_j \delta_{kl}) \rangle
=- \langle  D_k \zeta D_l  b_{jkl} \rangle - \langle  D_{kk}^2\zeta q_j \rangle
\\
=\langle D_kD_k \zeta  b_{jll} \rangle- \langle D_{kk} ^2\zeta q_j \rangle
= \langle D_{kk}^2\zeta (b_{jll}-q_j)\rangle.
\end{multline*}
This implies \eqref{f.10}, since by ergodicity the range of $\{D_{kk}^2\zeta\,|\,\zeta\in H^2(\Omega)\}$ is dense in $\{\zeta\in L^2(\Omega)\,|\,\expec{\zeta}=0\,\}$ and both $b_j$ and $q_j$ have vanishing expectation.

\medskip

The remaining identity \eqref{f.17} follows from $D\cdot q=0$, cf.\ (\ref{f.13}). Indeed, by the curl-free and symmetry conditions 
\begin{equation}\nonumber
D_lD_lb_{kkj}=D_lD_jb_{kkl}=D_lD_jb_{klk}=D_jD_kb_{kll} \quad \text{(without summation on repeated indices)}  
\end{equation}
in a distributional sense. Hence, for all $j$ we have
\begin{equation*}
\langle D_lD_l\zeta  b_{kkj}\rangle=\langle D_jD_k\zeta b_{kll} \rangle \stackrel{\eqref{f.10}}{=}\langle DD_j\zeta\cdot q\rangle\stackrel{\eqref{f.13}}{=}0,
\end{equation*}
and \eqref{f.17} follows from ergodicity.

\medskip

\step{3} Construction of potential scalar and vector fields for $g$ and $b$.

By construction of $g$ and $b$, these fields are horizontally curl-free in a distributional sense:
\begin{equation}\nonumber
D_jg_{k}=D_kg_{j}\quad\mbox{and}\quad
D_lb_{jkm}=D_mb_{jkl}.
\end{equation}
We extend the random variables $g$, $q$, and $b$ to stationary fields according to $g(a;x)=g(a(\cdot+x))$,
however keeping the same symbol so that in particular
(\ref{f.18}) is consistent with (\ref{si.6}). By definition of the horizontal derivative, spatial and horizontal derivatives
are then related by $(\partial_jg)(a;x)=(D_jg)(a(\cdot+x))$, so that we obtain in particular
\begin{equation}\nonumber
\partial_jg_{k}=\partial_kg_{j}\quad\mbox{and}\quad
\partial_lb_{jkm}=\partial_mb_{jkl}.
\end{equation}
Therefore, there exist fields $\phi=\phi(a;x)$ and $\sigma_{jk}=\sigma_{jk}(a;x)$
with the property that
\begin{equation}\label{f.15}
  g_{j}=\partial_j\phi,\quad b_{jkl}-b_{kjl}=\partial_l\sigma_{jk}.
\end{equation}
The fields $\phi$ and $\sigma_{jk}$ are uniquely
determined by (\ref{f.15}) up to a random additive constant in $x$, which we may
fix by requiring that their average on the unit ball centered at the origin vanishes, e.g. $\fint_B \phi=\fint_B \sigma_{jk}=0$. This makes the fields (generically) non-stationary and ensures that $\{\sigma_{jk}\}_{jk}$ inherits the build-in skew-symmetry of $\{b_{jkl}-b_{kjl}\}_{jk}$, and thus (\ref{f.19}) follows.
Clearly, the build-in vanishing expectation properties of $g$ and $b$
translate into those in (\ref{si.2}). Moreover, the bounds stated in \eqref{si.2} follow from the moment bounds
on $g$ and $q$, cf.\ (\ref{si.4}),  (\ref{f.13}), and \eqref{si.3}.

\medskip

We note that by definition (\ref{f.15}) and (\ref{f.16}), the latter rewritten in terms
of spatial instead of horizontal derivatives as $\nabla\cdot a(g+e)=0$, 
we obtain (\ref{f.2}). For (\ref{f.5}), we note that
\begin{equation}\nonumber
\partial_l\sigma_{jl}\stackrel{(\ref{f.15})}{=} b_{jll}-b_{ljl}=b_{jll}-b_{llj}\stackrel{(\ref{f.10}),(\ref{f.17})}{=}q_{j}.
\end{equation}
Finally (\ref{si.5}) can be seen as follows
\begin{eqnarray*}
\partial_l\partial_l\sigma_{jk}
&\stackrel{(\ref{f.15})}{=}&\partial_l b_{jkl}-\partial_l b_{kjl}\\
&=&\partial_l b_{jlk}-\partial_l b_{klj}\quad\mbox{by symmetry of}\;b\\
&=&\partial_k b_{jll}-\partial_j b_{kll}\quad\mbox{by curl-freeness of}\;b\\
&\stackrel{(\ref{f.10})}{=}&\partial_k q_{j}-\partial_jq_{k}.
\end{eqnarray*}
%


\subsection{Proof of Proposition~\ref{F}: Large-scale regularity by perturbation}

Following~\cite{Avellaneda-Lin-87}, we recover the improvement (\ref{F.1})
for $a$-harmonic functions as a perturbation of a result for $a_{\ho}$-harmonic
functions. 
By a scaling argument, we may assume $R=1$. To ease notation, we also assume $\fint_{B_1}(\phi,\sigma)=0$.

\medskip

\step1 Two PDE ingredients. 

\nobreak
On the one hand, we claim that there exists an exponent $\e=\e(d,\lambda)>0$ such that if $w$ and $g$ satisfy
\begin{align}\label{wi02}
-\nabla\cdot a\nabla w=\nabla\cdot g\;\;\mbox{in}\;B_1,\quad w=0\;\;\mbox{on}\;\partial B_1,
\end{align}
then we have the weighted energy estimate
\begin{align}\label{wi01}
\int_{B_1}(1-|x|)^\e|\nabla w|^2\lesssim\int_{B_1}(1-|x|)^\e |g|^2.
\end{align}
On the other hand, for any function $u_{\ho}$ with
\begin{align}\label{wi03}
-\nabla\cdot a_{\ho}\nabla u_{\ho}=0\quad\mbox{in}\;B_1,
\end{align}
we claim the inner regularity estimate
\begin{align}\label{wi05}
\sup_{B_{1-\rho}}\big(\rho|\nabla^2 u_{\ho}|+|\nabla u_{\ho}|\big)
\lesssim\big(\frac{1}{\rho^d}\int_{B_1}|\nabla u_{\ho}|^2\big)^\frac{1}{2}
\end{align}
for any boundary layer width $\rho\le1$. 

\medskip

We first address (\ref{wi01}). By Caccioppoli's estimate we have for any cut-off function $\eta$ in $B_1$
\begin{align}\label{wi11}
\int_{B_1}\eta^2|\nabla w|^2\lesssim\int_{B_1}\eta^2|g|^2+\int_{B_1}|\nabla\eta|^2w^2,
\end{align}
see \eqref{e.add-Caccio} in the proof of Lemma~\ref{L:reg} in Appendix~\ref{append:Cacc}.
Choosing $\eta^2=(1-|x|)^\e$ for some $\e\in(0,1)$ to be fixed later this turns into 
\begin{align*}
\int_{B_1}(1-|x|)^\e|\nabla w|^2\lesssim\int_{B_1}(1-|x|)^\e|g|^2+\e^2\int_{B_1}(1-|x|)^{\e-2}w^2,
\end{align*}
where $\lesssim$ stands for a constant that depends on $d$ and $\lambda$ but not on $\e$. In order to
absorb the second RHS term for $\e\ll 1$, we appeal to Hardy's inequality
\begin{equation}\label{hardy}
\int_{B_1}(1-|x|)^{\e-2}w^2\,\lesssim\, \int_{B_1}(1-|x|)^{\e}|\nabla w|^2.
\end{equation}
For the convenience of the reader, we display the standard argument. By polar coordinates, it is enough to establish
\begin{equation}\label{hardy-1}
  \int_0^1 (1-r)^{\e-2} w^2 r^{d-1}dr \,\le\, \frac{4}{(1-\e)^2} \int_0^1 (1-r)^\e (\partial_r w)^2 r^{d-1}dr,
\end{equation}
provided $w(1)=0$ (note that the finiteness of the RHS of \eqref{hardy-1} is enough to guarantee this trace). To this purpose we note $(1-r)^{\e-2}r^{d-1}\leq \frac{d}{dr}\big(\frac{1}{1-\e}(1-r)^{\e-1}r^{d-1}\big)$ so that
\begin{equation*}
  (1-r)^{\e-2}w^2 r^{d-1}\leq \frac{d}{dr}\big(\frac{1}{1-\e}(1-r)^{\e-1}w^2r^{d-1}\big)-\frac{2}{1-\e}(1-r)^{\e-1}w\partial_r w\, r^{d-1}.
\end{equation*}
When integrating this inequality over $r\in(0,1)$, we note that the boundary contribution from $r=0$ is non-positive, while the one from $r=1$ vanishes, since the finiteness of the RHS of \eqref{hardy-1} implies $w^2=o(1-r)$. By Cauchy-Schwarz' inequality we thus obtain
 $$
 \int_0^1 (1-r)^{\e-2} w^2 r^{d-1}dr \,\le\,\frac 2{1-\e}\Big(\int_0^1 (1-r)^{\e-2}w^2\,r^{d-1}\,dr\, \int_0^1(1-r)^{\e}(\partial_r w)^2\,r^{d-1}\,dr\Big)^\frac12,
 $$
 which implies \eqref{hardy-1} since by $w^2=o(1-r)$ the LHS is finite.

\medskip

We now turn to (\ref{wi05}) which is a consequence of the estimate 
\begin{align*}
  \rho|\nabla^2 u_{\ho}(z)|+|\nabla u_{\ho}(z)|
  \lesssim\big(\fint_{B_\rho(z)}|\nabla u_{\ho}|^2\big)^\frac{1}{2}\qquad\text{for all $z\in B_{1-\rho}$}.
\end{align*}
By translation and rescaling the latter follows from the inner regularity estimate
\begin{equation*}
\sup_{B_1}|\nabla^2 v|^2+\sup_{B_1}|\nabla v|^2\lesssim \int_{B_2}|\nabla v|^2,
\end{equation*}
for any $a_\ho$-harmonic function $v$ on $B_2$.
For the sake of brevity we focus on the first estimate, that is
\begin{equation}\nonumber
\sup_{B_1}|\nabla^2 v|^2\lesssim \int_{B_2}|\nabla v|^2.
\end{equation}
Since the coefficients $a_{\ho}$ are constant, to the effect that also the components of
$\nabla v$ are harmonic, this amounts to showing
\begin{equation}\nonumber
\sup_{B_1}|\nabla v|^2\lesssim \int_{B_2}|v|^2.
\end{equation}
By Sobolev's embedding, it is enough to show for some integer $k$ with $k>\frac{d}{2}+1$ that
\begin{equation}\nonumber
\int_{B_1}(|\nabla^k v|^2+|\nabla^{k-1}v|^2+\cdots+|\nabla v|^2)\lesssim \int_{B_2}|v|^2.
\end{equation}
Again, since the components of the tensor $\nabla^{m}v$, $m=0,\cdots,k-1$, are 
$a_{\ho}$-harmonic, this follows
from a $k$-fold application of the Caccioppoli estimate (\ref{f.51}) in Appendix~\ref{append:Cacc}, where the radius decreases
at every step by the amount of $\frac{1}{k}$.

\medskip

\step2 The harmonic approximation. 

\nobreak
We consider the Lax-Milgram solution $u_{\ho}$ to
\begin{align}\label{wi10}
-\nabla\cdot a_{\ho}\nabla u_{\ho}=0\;\;\mbox{in}\;B_1,\quad
u_{\ho}=u\;\;\mbox{on}\;\partial B_1.
\end{align}
We claim that
\begin{align}\label{wi06}
\int_{B_1}|\nabla u_{\ho}|^2\lesssim\int_{B_1}|\nabla u|^2.
\end{align}
Indeed, we rewrite (\ref{wi10}) as $-\nabla\cdot a_\ho\nabla(u_\ho-u)$ $=\nabla\cdot a_\ho\nabla u$
in $B_1$ with $u_\ho-u=0$ on $\partial B_1$, so that by testing with $u_\ho-u$ we obtain from the
ellipticity of $a_\ho$ that $\int_{B_1}|\nabla(u_\ho-u)|^2$ $\lesssim\int_{B_1}|\nabla u|^2$.
Now (\ref{wi06}) follows by the triangle inequality.

\medskip

\step3 Representation formula in conservative form.

For a given boundary layer thickness $\rho\in(0,\frac{1}{2}]$ we select a cut-off 
function $\eta$ with
\begin{align}\label{wi12}
\eta=1\;\mbox{in}\;B_{1-2\rho},\quad\eta=0\;\mbox{outside of}\;B_{1-\rho},\quad|\nabla\eta|\lesssim\frac{1}{\rho},
\end{align}
and consider the error in the two-scale expansion
\begin{align}\label{wi14}
w:=u-(1+\eta\phi_i\partial_i)u_{\ho},
\end{align}
which thanks to $\eta$ vanishes on $\partial B_1$. We claim that we have (\ref{wi02}) with RHS
\begin{align}\label{wi13}
g:=(1-\eta)(a-a_\ho)\nabla u_\ho+(\phi_ia-\sigma_i)\nabla(\eta\partial_iu_\ho).
\end{align}
Indeed, applying the gradient to (\ref{wi14}), we obtain by Leibniz' rule
\begin{equation}\label{F.5}
\nabla w=\nabla u-(\nabla u_{\ho}+\eta\partial_iu_{\ho}\nabla\phi_i+\phi_i\nabla(\eta\partial_iu_{\ho})).
\end{equation}
Applying $-\nabla\cdot a$, this yields because of (\ref{p.12})
\begin{eqnarray*}\nonumber
\lefteqn{-\nabla\cdot a\nabla w}\\
&=&\nabla\cdot\big(a\nabla u_{\ho}+\eta\partial_iu_{\ho}a\nabla\phi_i+\phi_i a\nabla(\eta\partial_iu_{\ho})\big)\\
&=&\nabla\cdot\big((1-\eta)a\nabla u_{\ho}+\eta\partial_iu_{\ho}a(\nabla\phi_i+e_i)+\phi_i a\nabla(\eta\partial_iu_{\ho})\big)\\
&\stackrel{(\ref{f.2})}{=}&\nabla\cdot\big((1-\eta)a\nabla u_{\ho}+\phi_i a\nabla(\eta\partial_iu_{\ho})\big)
+\nabla(\eta\partial_iu_{\ho})\cdot a(\nabla\phi_i+e_i).
\end{eqnarray*}
Writing $\nabla(\eta\partial_iu_{\ho})\cdot a_{\ho}e_i=\nabla\cdot(\eta\partial_iu_{\ho}a_{\ho}e_i)
=\nabla\cdot(\eta a_{\ho}\nabla u_{\ho})$, and appealing to (\ref{wi10})
in form of $\nabla\cdot(\eta a_{\ho}\nabla u_{\ho})=-\nabla\cdot((1-\eta) a_{\ho}\nabla u_{\ho})$,
we see that the above turns into
\begin{eqnarray*}\nonumber
-\nabla\cdot a\nabla w
&=&\nabla\cdot\big((1-\eta)(a-a_{\ho})\nabla u_{\ho})+\phi_i a\nabla(\eta\partial_iu_{\ho})\big)\\
&&+\nabla(\eta\partial_iu_{\ho})\cdot (a(\nabla\phi_i+e_i)-a_{\ho}e_i).
\end{eqnarray*}
Using $\nabla\cdot \sigma_i=q_i=a(\nabla\phi_i+e_i)-a_{\ho}e_i$, cf.\ (\ref{f.5}), and the skew-symmetry
of $\sigma_i$, cf.\ (\ref{f.19}), in form of
\begin{equation*}
\nabla\zeta\cdot(\nabla\cdot\sigma_i)\,=\,\partial_j\zeta\partial_k \sigma_{ijk}
\,\stackrel{(\ref{f.19})}{=}\, \partial_k(\partial_j\zeta \, \sigma_{ijk})\,\stackrel{(\ref{f.19})}{=}
\,-\nabla\cdot(\sigma_i\nabla\zeta)
,
\end{equation*}
we obtain (\ref{wi02}) with $g$ defined as in (\ref{wi13}).

\medskip

\step4 Estimate of $g$. 

We claim that
\begin{align}\label{wi09}
\int_{B_1}(1-|x|)^\e|g|^2
\lesssim\big(\rho^\e+\rho^{-d-2}\int_{B_1}|(\phi,\sigma)|^2\big)\int_{B_{1}}|\nabla u|^2.
\end{align}
Indeed, by definition (\ref{wi13}) of $g$ and definition (\ref{wi12}) of $\eta$ we have
\begin{multline*}
{\int_{B_1}(1-|x|)^\e|g|^2}
\,\lesssim \int_{B_{1}\setminus B_{1-2\rho}}(1-|x|)^\e|\nabla u_\ho|^2
\\
+\sup_{B_{1-\rho}}(|\nabla^2 u_\ho|^2+\rho^{-2}|\nabla u_\ho|^2)\int_{B_1}|(\phi,\sigma)|^2,
\end{multline*}
so that by (\ref{wi05})
\begin{align*}
\int_{B_1}(1-|x|)^\e|g|^2\lesssim\big(\rho^\e+\rho^{-d-2}\int_{B_1}|(\phi,\sigma)|^2\big)
\int_{B_{1}}|\nabla u_\ho|^2.
\end{align*}
Inserting (\ref{wi06}) yields (\ref{wi09}).

\medskip

\step5 Estimate by $w$. 

\nobreak
We claim that for any $r\le\frac{1}{4}$ we have
\begin{align}\label{wi07}
\lefteqn{\fint_{B_r}|\nabla u-\partial_iu_{\ho}(0)(e_i+\nabla\phi_i)|^2}\nonumber\\
&\lesssim\big(r^2+r^{-d}\fint_{B_1}|\phi|^2\big)\int_{B_1}|\nabla u|^2
+r^{-d-2}\fint_{B_1}(1-|x|)^\e|\nabla w|^2.
\end{align}
Indeed, since $u-(u_\ho(0)+\partial_iu_{\ho}(0)(x_i+\phi_i))$ is an $a$-harmonic function, we have by Caccioppoli's estimate
\begin{multline}\label{wi08}
{\fint_{B_r}|\nabla u-\partial_iu_{\ho}(0)(e_i+\nabla\phi_i)|^2}\\
\lesssim \, r^{-2}\fint_{B_{2r}}\Big(u-\big(u_\ho(0)+\partial_iu_{\ho}(0)(x_i+\phi_i)\big)\Big)^2.
\end{multline}
Using that $w=u-(1+\phi_i\partial_i)u_\ho$ on $B_{2r}$, cf.~(\ref{wi12}) \& (\ref{wi14}),
we obtain by the triangle inequality in $L^2$
\begin{multline*}
\fint_{B_{2r}}\Big(u-\big(u_{\ho}(0)+\partial_iu_{\ho}(0)(x_i+\phi_i)\big)\Big)^2
\\
\lesssim\fint_{B_{2r}}w^2
+\sup_{B_{2r}}\big(u_\ho-(u_\ho(0)+\partial_iu_\ho(0) x_i)\big)^2
+\sup_{B_{2r}}(\partial_iu_\ho-\partial_iu_\ho(0))^2\fint_{B_{2r}}\phi_i^2.
\end{multline*}
Combining  Taylor's formula, \eqref{wi05} with $\rho=\frac{1}{2}$, and
and \eqref{wi06}, yields
\begin{multline*}
\sup_{B_{2r}}\big(u_\ho-(u_\ho(0)+\partial_iu_\ho(0) x_i)\big)^2
+r^2 \sup_{B_{2r}}(\partial_iu_\ho-\partial_iu_\ho(0))^2\\
\lesssim r^4\sup_{B_\frac{1}{2}}|\nabla^2 u_\ho|^2\lesssim r^4\fint_{B_1}|\nabla u_\ho|^2
\lesssim r^4\fint_{B_1}|\nabla u|^2,
\end{multline*}
and thus
\begin{multline*}
{\fint_{B_{2r}}\Big(u-\big(u_{\ho}(0)+\partial_iu_{\ho}(0)(x_i+\phi_i)\big)\Big)^2}\nonumber\\
\,\lesssim \fint_{B_{2r}}w^2+\big(r^4+r^{2-d}\int_{B_{1}}|\phi|^2\big)\fint_{B_1}|\nabla u|^2.
\end{multline*}
We combine this with \eqref{hardy} 
\begin{equation*}
  \fint_{B_{2r}}w^2\lesssim r^{-d}\fint_{B_1}(1-|x|)^{\e-2} w^2\lesssim r^{-d}\fint_{B_1}(1-|x|)^\e |\nabla w|^2.
\end{equation*}
The combination with \eqref{wi08} yields \eqref{wi07}.

\medskip

\step6 Proof of (\ref{F.1}). 

\nobreak
Recall that by scaling we may assume $R=1$ so that $\delta=\left(\fint_{B_1}|(\phi,\sigma)|^2\right)^\frac12$. Inserting (\ref{wi01}) and (\ref{wi09}) into (\ref{wi07})
gives for $0<r,\rho\le\frac{1}{4}$ 
\begin{align*}
\fint_{B_r}|\nabla u-\partial_iu_{\ho}(0)(e_i+\nabla\phi_i)|^2\,\lesssim\,\Big(r^2+r^{-d-2}\big(\rho^\e+\rho^{-d-2}\fint_{B_1}|(\phi,\sigma)|^2\big)\Big)\fint_{B_1}|\nabla u|^2.
\end{align*}
Provided $\fint_{B_1}|(\phi,\sigma)|^2\ll 1$, we may choose $\rho=(\fint_{B_1}|(\phi,\sigma)|^2)^\frac{1}{d+2+\e}$
and obtain (\ref{F.1}) with $\frac{\e}{d+2+\e}$ playing the role of $\e$ 
and $\nabla u_\ho(0)$ the role of $\xi$.
If $\fint_{B_1}|(\phi,\sigma)|^2\gtrsim 1$ or $r\ge\frac{1}{4}$ we may choose $\xi=0$ and thus trivially
obtain (\ref{F.1}).

\medskip

\step7 Proof of (\ref{p.20}). 

\nobreak
The upper bound follows from Caccioppoli's estimate, cf.~(\ref{wi11}),
applied to the $a$-harmonic function $\xi\cdot x+\phi_\xi$ in form of
\begin{align*}
\big(\fint_{B_\frac{1}{2}}|\xi+\nabla\phi_\xi|^2\big)^\frac{1}{2}\lesssim
\big(\fint_{B_1}(\xi\cdot x+\phi_\xi)^2\big)^\frac{1}{2},
\end{align*}
followed by the triangle inequality in $L^2$. The lower bound follows from Poincar\'e's inequality
(with mean-value zero) and the triangle inequality in $L^2$
\begin{eqnarray*}
\big(\fint_{B_\frac{1}{2}}|\xi+\nabla\phi_\xi|^2\big)^\frac{1}{2}&\gtrsim&
\big(\fint_{B_\frac{1}{2}}(\xi\cdot x+\phi_\xi-\fint_{B_\frac{1}{2}}\phi_\xi)^2\big)^\frac{1}{2}
\\
&\ge& \big(\fint_{B_\frac{1}{2}}(\xi\cdot x)^2\big)^\frac12-\big(\fint_{B_\frac{1}{2}}(\phi_\xi-\fint_{B_\frac{1}{2}}\phi_\xi)^2\big)^\frac{1}{2}
\\
&\ge& \big(\fint_{B_\frac{1}{2}}(\xi\cdot x)^2\big)^\frac12-\big(\fint_{B_\frac{1}{2}} \phi_\xi^2\big)^\frac{1}{2}
\\
&\ge&C(d)|\xi|-\frac1{C(d)} \big(\fint_{B_1} \phi_\xi^2\big)^\frac{1}{2}.
\end{eqnarray*}
%



\subsection{Proof of Theorem~\ref{O}: Excess-decay and the minimal radius} 
We split the proof into two steps, and make use of the short-hand notation $\Exc(r):=\Exc(\nabla u;B_r)$.

\medskip

\step{1} Proof of \eqref{o.6} and \eqref{p.22}.

\noindent 

Given a $\delta\le 1$ to be fixed in the sequel as a function of $d$, $\lambda>0$, and $\alpha<1$,
we define $r_*$ in line with \eqref{O.1} as
\begin{align*}
r_*=\inf\Big\{r>0\;|\;\forall \rho \ge r \quad \frac{1}{\rho^2}\fint_{B_\rho}|(\phi,\sigma)-\fint_{B_\rho}(\phi,\sigma)|^2\le\delta\Big\}.
\end{align*}
Let $R\ge r_*$.
With this notation, \eqref{F.1} in Proposition~\ref{F} assumes the form
\begin{align}\label{f01}
\Exc(r)\le C_1\big((\frac{r}{R'})^2+\delta^{2\e}(\frac{R'}{r})^{d+2}\big)\fint_{B_{R'}}|\nabla u|^2
\end{align}
for all $a$-harmonic functions $u$ in $B_R$ and all radii $r_*\le r\le R'\le R$, 
where $C_1$ denotes some constant only depending on $d$, $\lambda$, and $\alpha$
the value of which we retain momentarily. Replacing the $a$-harmonic function $u$
by the $a$-harmonic function $x\mapsto u(x)-(\xi\cdot x+\phi_\xi(x))$, cf.~\eqref{f.2}, and optimizing in $\xi$, 
\eqref{f01} yields
\begin{align}\label{f02}
\Exc(r)\le C_1\big((\frac{r}{R'})^2+\delta^{2\e}(\frac{R'}{r})^{d+2}\big)\Exc(R').
\end{align}
We now first choose $\theta\le 1$, 
which is a placeholder for the ratio $\frac{r}{R'}$, so small that $C_1\theta^{2}\le\frac{1}{2}\theta^{2\alpha}$.
Since $\alpha<1$, this can be done and $\theta$ just depends on $d$, $\lambda$, and $\alpha$. 
We then choose $\delta\le 1$ so small that $C_1\delta^{2\e}(\frac{R'}{r})^{d+2}\le\frac{1}{2}\theta^{2\alpha}$;
again, this $\delta$ just depends on $d$, $\lambda$, and $\alpha$. With these choices, (\ref{f02}) assumes
the form
\begin{align*}
\Exc(\theta R')\le \theta^{2\alpha}\Exc(R')
\end{align*}
for all radii $R'\le R$ with $R'\ge r_*$. It is this form that may be iterated to yield
\begin{align*}
\Exc(\theta^n R)\le (\theta^n)^{2\alpha}\Exc(R)
\end{align*}
for all $n\in\mathbb{N}$ with $\theta^{n-1} R\ge r_*$. 

\medskip

For  $r_*\le r\le R$, choose now $n$ such that $\theta^{n+1}R<r\le\theta^n R$ and thus on the one hand
$\theta^n\le\theta^{-1}\frac{r}{R}$ while on the other hand $\Exc(r)\le\theta^{-d}\Exc(\theta^n R)$.
This implies the desired estimate (\ref{o.6}) 
\begin{equation}\nonumber
\Exc(r)\le\theta^{-(d+2\alpha)}(\frac{r}{R})^{2\alpha} \Exc(R).
\end{equation}

\medskip

Clearly, (\ref{p.22}) is an immediate consequence of (\ref{p.20}), possibly
further reducing the constant in (\ref{O.1}).

\medskip

\step{2} Proof of (\ref{o.30}). 

\noindent In view of the non-degeneracy condition (\ref{p.22}), for any $r_*\le \rho\le R$, there exists a unique
$\xi_{\rho}\in\mathbb{R}^d$ such that
\begin{equation}\label{o.33}
\fint_{B_\rho}|\nabla u-(\xi_{\rho}+\nabla\phi_{\xi_{\rho}})|^2
=\Exc(\rho),
\end{equation}
so that $\xi_{\rho}$ can be interpreted as an effective gradient of $u$ on scale $\rho$.
We claim that the dependence of $\xi_{\rho}$ on the scale $\rho$ is well-controlled by the excess
in the sense that for all $R\ge R'\ge r\ge r_*$
\begin{equation}\label{o.32}
|\xi_{r}-\xi_{R'}|^2\lesssim
\Exc(R'), 
\end{equation}
here and below $\lesssim$ denotes $\le$ up to a generic constant that
only depends on $d$ and $\alpha>0$. By a dyadic argument which we will sketch presently,
it is enough to consider two radii $\rho$ and $R'$
that are close in the sense of $\rho\le R'\le 2\rho$ and to show
\begin{equation}\label{o.31}
|\xi_{\rho}-\xi_{R'}|^2\lesssim\Exc(R').
\end{equation}
Here comes the dyadic argument: Let $N$ be the non-negative integer such that $2^{-(N+1)}R'< \rho\le 2^{-N}R'$.
By (\ref{o.31}) for $n=0,\cdots,N-1$ we have
\begin{equation}\nonumber
|\xi_{\rho}-\xi_{2^{-N}R'}|^2\lesssim\Exc(2^{-N}R'),\quad
|\xi_{2^{-(n+1)}R'}-\xi_{2^{-n}R'}|^2\lesssim\Exc(2^{-n}R'),
\end{equation}
and thus by the triangle inequality and since $\alpha>0$, we obtain (\ref{o.32}):
\begin{equation}\nonumber
|\xi_{\rho}-\xi_{R'}|^2\lesssim \left(\sum_{n=0}^N\sqrt{\Exc(2^{-n}R')}\right)^2
\stackrel{(\ref{o.6})}{\lesssim} \left(\sum_{n=0}^{N} (2^{-n})^{\alpha}\sqrt{\Exc(R')}\right)^2
\stackrel{\alpha>0}{\lesssim}\Exc(R').
\end{equation}
We now turn to the argument for (\ref{o.31}):
By the non-degeneracy condition (\ref{p.22}) on scale $\rho$ 
applied to $\xi_{\rho}-\xi_{R'}$, we have
\begin{eqnarray*}
|\xi_{\rho}-\xi_{R'}|^2&\lesssim&\fint_{B_\rho}|(\xi_{\rho}-\xi_{R'})+\nabla\phi_{\xi_{\rho}-\xi_{R'}}|^2,
\end{eqnarray*}
which by linearity we may rewrite as
\begin{eqnarray*}
|\xi_{\rho}-\xi_{R'}|^2&\lesssim&\fint_{B_\rho}|(\xi_{\rho}+\nabla\phi_{\xi_{\rho}})-(\xi_{R'}+\nabla\phi_{\xi_{R'}})|^2,
\end{eqnarray*}
so that by the triangle inequality in $L^2(B_\rho)$, and using $\rho\sim R'$, we obtain
\begin{eqnarray*}
|\xi_{\rho}-\xi_{R'}|^2&\lesssim&\fint_{B_\rho}|\nabla u-(\xi_{\rho}+\nabla\phi_{\xi_{\rho}})|^2
+\fint_{B_{R'}}|\nabla u-(\xi_{R'}+\nabla\phi_{\xi_{R'}})|^2.
\end{eqnarray*}
By definition (\ref{o.33}), and using once more $\rho\sim R'$  this turns as desired into
\begin{eqnarray*}
|\xi_{\rho}-\xi_{R'}|^2&\lesssim&\Exc(\rho)+\Exc(R')\lesssim \Exc(R').
\end{eqnarray*}

\medskip

We now may conclude the argument for (\ref{o.30}). By the triangle inequality in $L^2$, the definition \eqref{o.8} of the excess, and the non-degeneracy condition (\ref{p.22}), we get the two estimates
\begin{eqnarray}
  \fint_{B_r}|\nabla u|^2&\lesssim&\Exc(r)+|\xi_r|^2,\nonumber \\
  \Exc(R)+|\xi_R|^2&\lesssim& \fint_{B_R}|\nabla u|^2,\label{f08}
\end{eqnarray}
which combined with \eqref{o.6} in form of $\Exc(r)\lesssim\Exc(R)\leq\fint_{B_R}|\nabla u|^2$ and \eqref{o.32} (with $R'=R$) yields  (\ref{o.30}) as desired.


\subsection{Proof of Corollary~\ref{c}: Almost-sure Liouville property}

The Liouville property is a fairly simple consequence of
Theorem~\ref{O} and the following sublinear growth property
\begin{equation}\label{l.10}
\lim_{r\uparrow\infty}\frac{1}{r^2}\fint_{B_r}|(\phi,\sigma)-\fint_{B_r}(\phi,\sigma)|^2=0
\quad\mbox{for a.\ e.}\;a.
\end{equation}
For $\phi$ this statement (in a more involved form) is a key ingredient for the quenched invariance
principle and can be established based on ergodicity and stationarity, see \cite{Sidoravicius-Sznitman-04}.
We argue in Step~1 that the same argument can be used to establish
this property for $\sigma$.
\medskip

\step{1} Proof of \eqref{l.10}.

\nobreak

To keep notation lean, we just focus on $\sigma$ and consider only one of the components $\sigma_{ijk}$
of the tensor field $\sigma$. We drop the indices. The key property of the random, typically non-stationary 
field $\sigma(a,x)$ is that
\begin{equation}\nonumber
\nabla\sigma\quad\mbox{is stationary and of zero expectation and finite variance},
\end{equation}
see (\ref{si.2}) in the statement of Lemma~\ref{si}. 
For all $r>0$, define the rescaled tensor field $\sigma_r(x):=r^{-1} \Big(\sigma (rx)-\fint_{B} \sigma(ry)dy\Big)$.
On the one hand, by the pointwise ergodic theorem,  
\begin{equation}\label{l.9}
\nabla \sigma_r = (\nabla \sigma)(r\cdot) \,\stackrel{r\uparrow \infty}{\rightharpoonup} \, 0
\end{equation}
weakly in $L^2(B)$ almost surely, so that $\int_B |\nabla \sigma_r|^2$ is a bounded sequence almost surely.
On the other hand, by Poincar\'e's inequality on $B$ with mean value zero
\begin{equation}\label{I.9bis}
\int_B |\sigma_r|^2 \,\lesssim\, \int_B |\nabla \sigma_r|^2 \,\le \sup_{\rho \ge 1} \int_B |\nabla \sigma_\rho|^2 \,<\infty.
\end{equation}
By \eqref{l.9}, \eqref{I.9bis}, and the Rellich theorem, $\sigma_r$ thus converges strongly to 0 in $L^2(B)$ as $r\uparrow \infty$  almost surely.
Rescaling back, this yields \eqref{l.10}.

\medskip

\step 2 Conclusion.

We now give the argument for the almost-sure Liouville property. Recall
our short-hand notation $\Exc(r):=\Exc(\nabla u;B_r)$.
By
(\ref{l.10}), we may restrict ourselves to those coefficient fields  for which 
$\lim_{r\uparrow\infty}\frac{1}{r^{2}}\fint_{B_r}|(\phi,\sigma)-\fint_{B_r}(\phi,\sigma)|^2=0$.
Hence there exists a radius $r<\infty$ such
that (\ref{O.1}) holds for $C(\alpha,d,\lambda)$. Now we are given an $a$-harmonic
function $u$ with (\ref{l.11}). By Caccioppoli's estimate \eqref{f.51} (cf.~Lemma~\ref{L:reg} in Appendix~\ref{append:Cacc}), this can be upgraded to
\begin{equation}
\lim_{R\uparrow\infty}\frac{1}{R^{2\alpha}}\fint_{B_R}|\nabla u|^2=0,
\end{equation}
which in turn trivially yields
\begin{equation}
\lim_{R\uparrow\infty}\frac{1}{R^{2\alpha}}\Exc(R)=0.
\end{equation}
By (\ref{o.6}) this implies for all $\rho\ge r$
\begin{equation}
\inf_{\xi\in\mathbb{R}^d}\fint_{B_\rho}|\nabla u-(\xi+\nabla\phi_\xi)|^2=\Exc(\rho)=0,
\end{equation}
that is
\begin{equation}
\forall \rho<\infty\quad\exists \xi\in\mathbb{R}^d\quad\text{ s.t. }\quad
\nabla u=\xi+\nabla\phi_\xi\quad\mbox{a.e.~in}\;B_\rho,
\end{equation}
which upgrades to
\begin{equation}
\exists \xi\in\mathbb{R}^d\quad\text{s.t.}\quad
\nabla u=\xi+\nabla\phi_\xi\quad\mbox{a.e.~in}\;\mathbb{R}^d,
\end{equation}
and thus in turn implies (\ref{l.12}).


\subsection{Proof of Corollary~\ref{co}: Intrinsic large-scale $C^{1,1-}$-regularity}

In this proof, we use the short-hand notation $\Exc(D):=\Exc(\nabla u;D)$ for any domain $D$.
In view of the non-degeneracy condition (\ref{p.22}), for any $\rho\ge r_*(\pm x)$, 
there exists a unique $\xi_{\rho,\pm}\in\mathbb{R}^d$ such that
\begin{equation}\label{m.6}
\fint_{B_\rho(x_\pm)}|\nabla u-(\xi_{\rho,\pm}+\nabla\phi_{\xi_{\rho,\pm}})|^2
=\Exc(B_\rho(x_\pm)),
\end{equation}
so that $\xi_{\rho,\pm}$ can be interpreted as an effective gradient of $u$ at $\pm x$ on scale $\rho$.
Recall that we use the shorthand notation $\xi_{\pm}=\xi_{r_*,\pm}$.
As in (\ref{o.32}) in the proof of Theorem~\ref{O} we have
that the dependence of $\xi_{\rho,\pm}$ on the scale $\rho$ is well-controlled by the excess
in the sense that we have for all $r\ge r_*(\pm x)$
\begin{equation}\label{m.4}
|\xi_{\pm}-\xi_{r,\pm}|^2\lesssim\Exc(B_r(\pm x)),
\end{equation}
where here and in the remainder of the proof, $\lesssim$ denotes $\le$ up to a generic constant that
only depends on $d$, $\lambda$, and $\alpha$.

\medskip

We set for abbreviation
\begin{equation}\label{m.7}
r:=\max\{4|x|,2r_*(x),r_*(-x)\}\quad\mbox{so that}\;\frac{r}{4}\ge|x|,\;\frac{r}{2}\ge r_*(x),\;
r\ge r_*(-x).
\end{equation}
We now claim that on this scale $r$ (which up to the cut-off $r_*$ is essentially
the distance between the points $x$ and $-x$),
the difference of the corresponding effective gradients $\xi_{r,+}$ and $\xi_{r,-}$ is well-controlled
by the excess on that scale in the sense of
\begin{equation}\label{m.8}
|\xi_{r,+}-\xi_{r,-}|^2\lesssim \Exc(B_r(x))+\Exc(B_r(-x)).
\end{equation}
Indeed, by the non-degeneracy condition (\ref{p.22}) and thanks to (\ref{m.7}),
we have
\begin{equation}\nonumber
|\xi_{r,+}-\xi_{r,-}|^2\lesssim\fint_{B_\frac{r}{2}(x)}|(\xi_{r,+}-\xi_{r,-})+\nabla\phi_{\xi_{r,+}-\xi_{r,-}}|^2.
\end{equation}
By linearity of $\nabla\phi_\xi$ in $\xi$, the triangle inequality, 
and $B_\frac{r}{2}(x)\stackrel{(\ref{m.7})}{\subset} B_r(\pm x)$, this yields
\begin{equation}\nonumber
|\xi_{r,+}-\xi_{r,-}|^2\lesssim\fint_{B_r(x)}|\nabla u-(\xi_{r,+}+\nabla\phi_{\xi_{r,+}})|^2
+\fint_{B_r(-x)}|\nabla u-(\xi_{r,-}+\nabla\phi_{\xi_{r,-}})|^2,
\end{equation}
which turns into (\ref{m.8}) by definition of $\xi_r$ and of the excess.

\medskip

By the triangle inequality, estimates (\ref{m.4}) and (\ref{m.8}) combine to
\begin{equation}\label{m.10}
|\xi_{+}-\xi_{-}|^2\lesssim
\Exc(B_r(x))+\Exc(B_r(-x)).
\end{equation}
Since by (\ref{m.7}) we have $r\ge r_*(\pm x)$, and by assumption on $R$ we have $r\le R$,
we may apply Theorem~\ref{O} to the effect of
\begin{equation}\nonumber
\Exc(B_r(x))+\Exc(B_r(-x))\lesssim(\frac{r}{R})^{2\alpha}\big(\Exc(B_\frac{R}{2}(-x))
+\Exc(B_\frac{R}{2}(x))\big).
\end{equation}
Since by assumption $R\ge 4|x|$ we have in particular $B_\frac{R}{2}(\pm x)\subset B_{R}$ so that
trivially by definition of the excess,
\begin{equation}\nonumber
\Exc(B_\frac{R}{2}(x))
+\Exc(B_\frac{R}{2}(-x))\lesssim \Exc(B_R).
\end{equation}
The combination of the three last estimates turns into (\ref{m.11}).


\subsection{Proof of Corollary~\ref{cor:Lip}: Intrinsic large-scale Schauder-estimates}

We select $\alpha'\in(\alpha,1)$, say $\alpha'$ $:=\frac{1+\alpha}{2}$, and choose $C$ in the definition (\ref{O.1})
of $r_*$ so small that Theorem~\ref{O} holds with $\alpha'$ playing the role of $\alpha$.

\medskip

\step{1} Proof of \eqref{eq:SE:1} and \eqref{eq:SE:3}.

Let $r_*\le r\le \rho\le R$. We first argue that for some constant $C_1=C_1(d,\lambda,\alpha)$ we have
\begin{align}\label{f04}
\Exc(\nabla u+g;B_r)&\le C_1\Big((\frac{r}{\rho})^{2\alpha'}\Exc(\nabla u+g;B_{\rho})\nonumber\\
&+(\frac{\rho}{r})^d\fint_{B_{\rho}}(|g-\fint_{B_{\rho}}g|^2+|h-\fint_{B_{\rho}}h|^2)\Big).
\end{align}
To this end for $\xi:=\fint_{B_{\rho}}g$ we consider the Lax-Milgram solution $w$ of
\begin{align*}
\begin{array}{rcll}
  -\nabla\cdot a\nabla w&=&\nabla\cdot (a(g-\xi)+h) &\text{in }B_{\rho},\\
    w&=&0&\text{on }\partial B_{\rho},
\end{array}
\end{align*}
which is made such that on the one hand, by (\ref{cw51})
$x\mapsto u+\xi\cdot x-w$ is $a$-harmonic in $B_{\rho}$, so that by (\ref{o.6}),
\begin{align*}
\Exc(\nabla u+\xi-\nabla w;B_r)\lesssim(\frac{r}{\rho})^{2\alpha'}\Exc(\nabla u+\xi-\nabla w;B_{\rho}),
\end{align*}
and on the other hand, one has the energy estimate
\begin{align*}
\fint_{B_{\rho}}|\nabla w|^2\lesssim\fint_{B_{\rho}}(|g-\fint_{B_{\rho}}g|^2+|h-\fint_{B_{\rho}}h|^2).
\end{align*}
By the triangle inequality in $L^2$ and $\fint_{B_r}\le(\frac{\rho}{r})^d\fint_{B_{\rho}}$,
the combination of these implies~(\ref{f04}).

\medskip

We then argue in favor of \eqref{eq:SE:1} based on \eqref{f04}, which we rewrite in terms of $\theta=\frac{r}{\rho}$:
\begin{align*}
\Exc(\nabla u+g;B_{\theta \rho})
&\le C_1\big(\theta^{2\alpha'}\Exc(\nabla u+g;B_{\rho})\nonumber\\
&+\theta^{-d}\fint_{B_{\rho}}(|g-\fint_{B_{\rho}}g|^2+|h-\fint_{B_{\rho}}h|^2)\big),
\end{align*}
divide by $(\theta \rho)^{2\alpha}$, and take the supremum over $\rho\in[\frac{r_*}{\theta},R]$:
\begin{align*}
\sup_{r\in[r_*,\theta R]}\frac{1}{r^{2\alpha}}\Exc(\nabla u+g;B_{r})
&\le C_1\big(\theta^{2(\alpha'-\alpha)}\sup_{r\in[r_*,R]}\frac{1}{r^{2\alpha}}\Exc(\nabla u+g;B_r)\nonumber\\
&+\theta^{-d-2\alpha}
\sup_{r\in[r_*,R]}\frac{1}{r^{2\alpha}}\fint_{B_{r}}(|g-\fint_{B_r}g|^2+|h-\fint_{B_r}h|^2)\big).
\end{align*}
We now choose $\theta=\theta(d,\lambda,\alpha)\le 1$ so small that 
$C_1\theta^{2(\alpha'-\alpha)}\le\frac{1}{2}$; which yields 
\begin{align*}
\sup_{r\in[r_*,\theta R]}\frac{1}{r^{2\alpha}}\Exc(\nabla u+g;B_{r})
&\lesssim \sup_{r\in[\theta R,R]}\frac{1}{r^{2\alpha}}\Exc(\nabla u+g;B_r)\nonumber\\
&+\sup_{r\in[r_*,R]}\frac{1}{r^{2\alpha}}\fint_{B_{r}}(|g-\fint_{B_r}g|^2+|h-\fint_{B_r}h|^2)\big).
\end{align*}
Since $\sup_{r\in[\theta R,R]}\frac{1}{r^{2\alpha}}\Exc(\nabla u+g;B_r)$
$\lesssim \frac{1}{R^{2\alpha}}\Exc(\nabla u+g;B_R)$, this yields 
(\ref{eq:SE:1}) in case of $R<\infty$. 
In case of $R=\infty$ we obtain (\ref{eq:SE:3}) from (\ref{eq:SE:1}) in the limit $R\uparrow\infty$
by the square integrability of $\nabla u+g$ on $\mathbb{R}^d$ in form of 
$\Exc(\nabla u+g,B_R)\le\fint_{B_R}|\nabla u+g|^2\downarrow 0$.

\medskip

\step{2} Proof of \eqref{eq:SE:2}.

\nobreak
Starting point is \eqref{eq:SE:1}, which also holds in the more general form of: For all $r\ge r_*$,
\begin{align*}
\sup_{\rho\in[r,R]}(\frac{R}{\rho})^{2\alpha}\Exc(\nabla u+g;B_\rho)
&\lesssim \Exc(\nabla u+g;B_{R})\nonumber\\
&+\sup_{\rho\in[r,R]}(\frac{R}{\rho})^{2\alpha}\fint_{B_\rho}(|g-\fint_{B_\rho}g|^2+|h-\fint_{B_\rho}h|^2)
\end{align*}
since we may increase $r_*$ at our pleasure.
As in Step 2 of the proof of Theorem~\ref{O} we denote by $\xi_r$ the optimal $\xi$ in the definition
of $\Exc(\nabla u+g;B_r)$. An inspection of the proof of \eqref{o.32} in that step shows that we have
\begin{align*}
|\xi_r-\xi_R|^2\lesssim \sup_{\rho\in[r,R]}(\frac{R}{\rho})^{2\alpha}\Exc(\nabla u+g;B_\rho)
\end{align*}
as soon as $\alpha>0$.
Giving away some and using the triangle inequality in $\mathbb{R}^d$, the two last estimates combine to
\begin{align}\label{f10}
|\xi_r|^2+\Exc(\nabla u+g;B_r)
&\lesssim|\xi_R|^2+\Exc(\nabla u+g;B_{R})\nonumber\\
&+\sup_{\rho\in[r,R]}(\frac{R}{\rho})^{2\alpha}\fint_{B_\rho}(|g-\fint_{B_\rho}g|^2+|h-\fint_{B_\rho}h|^2).
\end{align}
Combined with the triangle inequality in $L^2$, the definition of the excess, and the non-degeneracy
property in the form of
\begin{align*}
\fint_{B_r}|\nabla u+g|^2&\lesssim |\xi_r|^2+\Exc(\nabla u+g;B_r),\nonumber\\
|\xi_R|^2+\Exc(\nabla u+g;B_R)&\lesssim \fint_{B_R}|\nabla u+g|^2,
\end{align*}
cf.~(\ref{f08}), we may pass from \eqref{f10} to \eqref{eq:SE:2}.


\subsection{Proof of Corollary~\ref{cor:Lp}: Large-scale Calder\'on-Zygmund estimates}

We follow the standard approach to Calder\'on-Zygmund in the constant-coefficients case 
that passes via a BMO-estimate (see for instance
\cite[Section 7.1.1]{Giaquinta-Martinazzi-05}). The main ingredient is excess decay for solutions of the homogeneous equation (cf.~the excess-decay estimate \eqref{eq:SE:1} in Corollary~\ref{cor:Lip}
in our variable-coefficients case).
There are two main differences with respect to \cite[Section 7.1.1]{Giaquinta-Martinazzi-05}: First, the excess decay is limited to the scale $r_*$,
and second, we work on $L^2$-based quantities rather than $L^1$-based quantities.
In Step~1, we choose a suitable minimal radius $\ru$ for the estimate (which we then simply call $r_*$ in the rest of the proof).
In Step~2, we show that we control the  energy by the  intrinsic excess on dyadic cubes.
In Step~3, we turn to the control of sub-level sets, which yields control of the  $L^p$-norm in Step~4. In Step~5 we 
prove the equivalence of discrete and continuous norms, which we use in Step~6 to prove \eqref{I1} in the range $2\le p<\infty$. In Step~7 we argue by duality to derive \eqref{I1}
in the remaining range of exponents $1<p\le 2$. 

\medskip

\step1 Choice of $\ru$.

For all $0<L\le 1$, it turns out that we may choose $\ru$ to be the largest function with Lipschitz constant $L$
below $r_*$, that is,
\begin{equation}\label{fff54}
\ru(x)=\inf_{y\in\mathbb{R}}(r_*(y)+L|x-y|).
\end{equation}
This implies that (\ref{O.1}) survives with $r_*$ replaced by $\ru$ at the expense of a worse
constant~$C$:
\begin{equation}\nonumber
\frac{1}{R}\Big(\fint_{B_R(x)}|(\phi,\sigma)-\fint_{B_R(x)}(\phi,\sigma)|^2\Big)^\frac{1}{2}\le(\frac{1}{L}+1)^{\frac{d}{2}+1}\frac1C
\quad\mbox{for all}\;R\ge\ru(x)\;\mbox{and}\;x\in\mathbb{R}^d,
\end{equation}
Indeed, for a point $x\in\mathbb{R}^d$ and a radius $R<\infty$ with $\ru(x)<R$, 
by definition (\ref{fff54}), there exists $y\in\mathbb{R}$ such that $|x-y|\le\frac{R}{L}$ and $r_*(y)\le R$.
The former implies $B_R(x)\subset B_{\bar R}(y)$ where $\bar R:=(\frac{1}{L}+1)R$ so that
$$
\frac{1}{R}\Big(\fint_{B_{R}(x)}|(\phi,\sigma)
-\fint_{B_{R}(x)}(\phi,\sigma)|^2\Big)^\frac{1}{2}
\le (\frac{\bar R}{R})^{\frac{d}{2}+1}\frac{1}{\bar R}\Big(\fint_{B_{\bar R}(y)}|(\phi,\sigma)
-\fint_{B_{\bar R}(y)}(\phi,\sigma)|^2\Big)^\frac{1}{2}.
$$
The latter implies 
$$
\frac{1}{\bar R}\Big(\fint_{B_{\bar R}(y)}|(\phi,\sigma)
-\fint_{B_{\bar R}(y)}(\phi,\sigma)|^2\Big)^\frac{1}{2}\le(\frac{1}{L}+1)^{\frac{d}{2}+1}\frac1C.
$$
Choosing $L=\felix$ and $C=\Felix^{d+2}C_0$, we thus have that $\ru$ is $\felix$-Lipschitz and satisfies
$r_*(C_0)\le \ru \le r_*(\Felix^{d+2}C_0)$, as claimed. In particular, we have the mean-value property for all $R\ge \ru$.
In the rest of the proof, we use the short-hand notation $r_*$ for $\ru$.

\medskip

\step2 Control of the energy via the intrinsic excess.

In the standard approach to Calder\'on-Zygmund for constant coefficients, 
see for instance \cite[Proposition 6.31]{Giaquinta-Martinazzi-05}, the main part consists in controlling the
(standard) energy density $\rho_{stan}$ by the (standard) excess density $e_{stan}$ (the ``sharp function'' of $\nabla u$, see
for instance \cite[Section 6.3.4]{Giaquinta-Martinazzi-05}) where
\begin{equation}\label{fff70}
\rho_{stan}:=|\nabla u|^2\quad\mbox{and}\quad
e_{stan}(x):=\sup_{r} \Big(\inf_\xi\fint_{B_r(x)}|\nabla u-\xi|^2\Big),
\end{equation}
which amounts to a result by Fefferman \& Stein,
see for instance \cite[Theorem 6.30]{Giaquinta-Martinazzi-05}. More precisely, the next simple 
step in the standard approach consists in deriving the weak-strong estimate
\begin{equation}\label{fff62}
\left.\begin{array}{l}
|B\cap\{\rho_{stan}\le 1\}|\sim|B|\\[1ex]
M\gg 1,\;\;\int_Be_{stan}\lesssim |B|
\end{array}\right\}\quad\Longrightarrow\quad|B\cap\{\rho_{stan}\ge M\}|\lesssim\frac{1}{M}\int_Be_{stan}
\end{equation}
for any ball $B\subset\mathbb{R}^d$.
In the standard theory, this is clearly based on the fact the $\xi$ in the definition of the excess is constant;
whereas in our case it is replaced by $\xi_i(e_i+\nabla \phi_i)$ and thus is only approximately constant on
scales $\gtrsim r_*$. In fact, we shall appeal to \eqref{p.22} in Theorem~\ref{O}, namely
\begin{equation}\label{fff60}
\fint_{B_R(x)}|\xi_i(e_i+\nabla \phi_i)|^2\sim|\xi|^2\quad\mbox{provided}\;R\gg  r_*(x).
\end{equation}
In what follows, it will be convenient to have a partition $\partn$ of $\mathbb{R}^d$ into dyadic cubes $Q\in \{2^k (\Z^d+[-\frac12,\frac12)^d),k\in \Z\}$ such that
\begin{equation}\label{fff75}
r_*\sim\diam Q\quad\mbox{on}\;Q.
\end{equation}
Such a partition can be constructed like a Calder\'on-Zygmund decomposition: ${\mathcal Q}$ consists of those dyadic 
cubes $Q$ which are such that $Q$ and all its ancestors $Q'$ (that is, $Q'\in  \{2^k (\Z^d+[-\frac12,\frac12)^d),k\in \Z\}$ and $Q\subset Q'$) satisfy
\begin{align*}
\fint_{Q}r_*\ge \diam Q\quad\mbox{and}\quad \fint_{Q'}r_*<\diam {Q'}.
\end{align*}
The second inequality implies
\begin{align*}
\fint_{Q}r_*\le 2^{d+1}\diam Q.
\end{align*}
Since $r_*$ is $\felix$-Lipschitz-continuous and since we may w.l.o.g.~assume that $r_*$ is bounded away from zero, ${\mathcal Q}$ defines indeed a (countable) partition 
of $\mathbb{R}^d$.
Since $r_*$ is $\felix$-Lipschitz continuous, the last inequalities imply (\ref{fff75}).
We now may pass from balls to cubes, more precisely, from  (\ref{fff60}) to
\begin{equation}\label{fff63}
\fint_{Q}|\xi_i(e_i+\nabla \phi_i)|^2\sim|\xi|^2\quad\mbox{for all}\;Q\in{\mathcal Q}.
\end{equation}
Indeed, select an $x\in Q$; by (\ref{fff75}) we have $B_r(x)\subset Q\subset B_R(x)$ for two radii $r,R\sim r_*(x)$,
so that (\ref{fff60}) translates into (\ref{fff63}).

\medskip

The price to pay for the lower scale $r_*$
is a softening of the standard version $\rho_{stan}$ of the energy density as follows
\begin{equation}\label{fff66}
\rho:=\fint_Q|\nabla u|^2 \quad\mbox{on}\;Q\quad\mbox{for all}\;Q\in{\partn}.
\end{equation}
Equipped with this modification, we recover (\ref{fff62}) for our objects: We will argue that for any 
dyadic $D$
\begin{equation}\label{fff66bis}
\left.\begin{array}{l}
|D\cap\{\rho\le 1\}| \sim|D|\\[1ex]
M\gg 1,\;\;e(D)\lesssim |D|
\end{array}\right\}\quad\Longrightarrow\quad|D\cap\{\rho\ge M\}| \lesssim\frac{1}{M}e(D),
\end{equation}
where we have set for abbreviation
\begin{equation}\label{fff73}
e(D):=  \inf_\xi \int_D |\nabla u-\xi_i(e_i+\nabla \phi_i)|^2.
\end{equation}
Note that because of the fact that $\rho$ is piecewise constant on $\partn$, cf.~\eqref{fff66}, \eqref{fff66bis}
is trivially satisfied for a $D$ that is contained in $\partn$ or finer.
We thus consider a dyadic $D$ coarser than elements of $\partn$, and let $Q\in{\partn}$ be an arbitrary cube contained in $D$.
On the one hand, we have by the upper bound
in (\ref{fff63}) and the definition (\ref{fff66}) of $\rho$:
\begin{align}\label{fff67}
\rho=\fint_Q|\nabla u|^2 \lesssim \fint_Q|\nabla u-\xi_i(e_i+\nabla \phi_i)|^2+|\xi|^2\quad\mbox{on}\;Q.
\end{align}
On the other hand, we have by the lower bound in (\ref{fff63})
\begin{align*}
|\xi|^2 \lesssim \fint_Q|\nabla u-\xi_i(e_i+\nabla \phi_i)|^2+\rho\quad\mbox{on}\;Q.
\end{align*}
Summing the integral of the latter over $Q$ for all $Q\in \partn$ such that $Q\subset D$ and $\rho_{|Q}\le 1$, we obtain
\begin{align*}
|D\cap\{\rho\le 1\}||\xi|^2 \lesssim \int_D |\nabla u-\xi_i(e_i+\nabla \phi_i)|^2 +|D|.
\end{align*}
Choosing $\xi$ to be the minimizer in (\ref{fff73}), this turns into
$|D\cap\{\rho\le 1\}||\xi|^2 \lesssim e(D)+|D|$,
so that by the assumptions in (\ref{fff66bis}) we have $|\xi|^2\lesssim 1$.
The combination of this with (\ref{fff67}) yields because of $M\gg 1$
\begin{align*}
\rho\ge M\;\mbox{on}\;Q\quad\Longrightarrow\quad \fint_Q|\nabla u-\xi_i(e_i+\nabla \phi_i)|^2  \gtrsim M,
\end{align*}
which we rewrite as (recall that $\rho$ is constant on $Q$)
\begin{align*}
|Q\cap\{\rho\ge M\}|\lesssim\frac{1}{M}  \int_Q|\nabla u-\xi_i(e_i+\nabla \phi_i)|^2.
\end{align*}
Summing over all $Q\subset D$ with $Q\in{\partn}$ yields the RHS of (\ref{fff66bis}).

\medskip

\step3 Control of the sub-level sets of the energy by the sub-level sets of the intrinsic excess.

The next step in the standard theory starts from (\ref{fff62}) and
establishes control of the global measure of sub-level sets of $\rho_{stan}$ by the one of sub-level sets of $e_{stan}$.
More precisely, it consists in passing from (\ref{fff62}) 
to
\begin{align}\label{fff71}
|\{\rho_{stan}\ge M\}|\lesssim|\{e_{stan}\ge\theta\}|+\frac{\theta}{M}|\{\rho_{stan} \ge 1\}|\quad
\mbox{for}\;\theta\ll 1\ll M,
\end{align}
see for instance \cite[Proposition 6.31]{Giaquinta-Martinazzi-05}.
This holds verbatim also in our case
\begin{align}\label{fff68}
|\{\rho\ge M\}| \lesssim|\{e\ge\theta\}|+\frac{\theta}{M}|\{\rho \ge 1\}| \quad
\mbox{for}\;\theta\ll 1\ll M,
\end{align}
where we even may relax the definition of the standard version $e_{stan}$ of the excess density,
cf.~(\ref{fff70}),
which could also be defined with help of the family of dyadic cubes $D$ as
$e_{stan}(x):=\sup_{D\ni x}\inf_\xi\Big(\fint_D|\nabla u-\xi|^2\Big)$
by restricting the supremum over dyadic cubes to those that are ancestors of cubes in the
decomposition ${\partn}$:
\begin{align}\label{fff65}
e(x):=\sup_{D\ni x}\Big\{\frac{e(D)}{|D|}\Big|Q\subset D\;\mbox{for some}\;Q\in{\partn}\Big\},
\end{align}
where $e(D)$ is defined in (\ref{fff73}).
The argument in passing from (\ref{fff66bis}) to (\ref{fff68}) is identical
to the argument for passing from (\ref{fff62}) to (\ref{fff71}).
By successive divisions we construct a Calder\'on-Zygmund partition ${\mathcal D}$ 
based on the characteristic function of $\{\rho\ge 1\}$. 
In other words, ${\mathcal D}$ consists of
those dyadic cubes $D$ such that it and all its ancestors $D'\supset D$ satisfy
\begin{align}\label{fff69}
|D\cap\{\rho\ge 1\}|>\frac{1}{2^{d+1}}|D|\quad\mbox{and}\quad|D'\cap\{\rho\ge 1\}|\le\frac{1}{2^{d+1}}|D'|.
\end{align}
This yields a disjoint decomposition of $\mathbb{R}^d$ into $\{D\}_{D\in {\mathcal D}}$
and a set where $\rho< 1$. Hence for (\ref{fff68}) it is enough to show for every
cube $D\in{\mathcal D}$:
\begin{align}\label{fff70b}
|D\cap\{\rho\ge M\}| \lesssim|D\cap\{e\ge\theta\}|+\frac{\theta}{M}|D\cap\{\rho \ge 1\}| \quad
\mbox{for}\;\theta\ll 1\ll M.
\end{align}
Note that the first and second properties in (\ref{fff69}) imply in particular
\begin{align}\label{fff74}
\frac{1}{2^{d+1}}|D| \le |D\cap\{\rho\ge 1\}| \le\frac{1}{2}|D|,
\end{align}
and therefore one of the LHS conditions in (\ref{fff66bis}). In addition, this yields that $\rho$ is not constant on $D$. By the definition of the dyadic decomposition $\partn$ and by
definition \eqref{fff66} of $\rho$
this implies that there is a $Q\in{\partn}$ such that $Q\subset D$ (strictly, in fact). Hence
in view of the definition (\ref{fff65}),
\begin{equation}\label{fff75bis}
|D| e(x)\ge e(D)\quad\mbox{for all }\;x\in Q.
\end{equation}
To recover the other LHS condition in (\ref{fff66bis}), we first consider the case 
$e(D)<\theta |D|\le |D|$
in which case the RHS of (\ref{fff66bis}) assumes the form $|D\cap\{\rho\ge M\}|\lesssim\frac{\theta}{M}|D|$,
which together with (\ref{fff74}) yields (\ref{fff70b}). It remains to consider the case $e(D)\ge\theta |D|$,
which by (\ref{fff75bis}) implies $D\cap\{e\ge\theta\}=D$ so that (\ref{fff70b}) is automatically met.

\medskip

\step4 Conversion of the control of the sub-level sets to an $L^p$-estimate.

The previous-to-last step in the standard argument is to convert (\ref{fff71})
into the $L^p$-estimate $\int\rho_{stan}^p\lesssim\int e_{stan}^p$ for any $1<p<\infty$, see for instance \cite[Theorem 6.30]{Giaquinta-Martinazzi-05}. By the same argument we may pass
from (\ref{fff68}) to
\begin{align}\label{fff73bis}
\int\rho^p  \lesssim\int e^p .
\end{align}
Indeed, by a scaling argument in form of $u=t\hat u$ we may upgrade (\ref{fff68}) to
\begin{align*}
|\{\rho\ge Mt\}| \lesssim|\{e\ge\theta t\}|+\frac{\theta}{M}|\{\rho \ge t\}|\quad
\mbox{for}\;\theta\ll 1\ll M\;\mbox{and all}\;t>0.
\end{align*}
Integrating against $t^{p-1}$ we obtain
\begin{align*}
\frac{1}{M^p}\int\rho^p \lesssim\frac{1}{\theta^p}\int e^p +\frac{\theta}{M}\int\rho^p \quad
\mbox{for}\;\theta\ll 1\ll M.
\end{align*}
This yields \eqref{fff73bis} by first fixing an $M\sim 1$ sufficiently large for which this inequality holds, and then choosing
$\theta\sim 1$ sufficiently small so that the last term may be absorbed.

\medskip

\step5 Equivalence of discrete and continuous norms.

In this step we prove that for all $1<p<\infty$ and non-negative functions $h$ we have
\begin{equation}
\Big(\int \Big( \fint_{B_{*}(x)} h\Big)^\frac p2dx\Big)^\frac2p \,\sim\, \Big(\sum_{Q\in \partn} |Q| \Big(\fint_Q h\Big)^\frac p2\Big)^\frac2p ,
\label{e.equiv-rP-1}
\end{equation}
where and $A\sim B$ means $\frac1C A \le B \le C B$ for a generic constant $C$ depending only on $d$ (and not on $p$).
In particular, for $p= 2$, this takes the form
\begin{eqnarray}
\int \fint_{B_*(x)} h \ dx&\sim& \int h.
\label{e.equiv-rP-2}
\end{eqnarray}
We split the rest of this step into two parts.

\medskip

\substep{5.1} Proof that for all $1\le p<\infty$,
\begin{equation}
\Big(\int \Big( \fint_{B_{*}(x)} h\Big)^\frac p2dx\Big)^\frac2p \,\lesssim\, \Big(\sum_{Q\in \partn} |Q| \Big(\fint_Q h\Big)^\frac p2\Big)^\frac2p .
\label{e.equiv-rP-11}
\end{equation}
For all $Q,Q'\in \partn$, we write $Q'\leadsto Q$ if there exists $x\in Q$ such that $B_*(x)\cap Q'\ne \emptyset$,
and first claim that if $Q'\leadsto Q$ then
\begin{eqnarray}
\diam {Q'}&\sim &\diam Q,
\label{e.eq-norms-2.1}\\
\dist (Q,Q')&\lesssim& \diam {Q'}.
\label{e.eq-norms-2.2}\
\end{eqnarray}
We first note that \eqref{e.eq-norms-2.1} implies \eqref{e.eq-norms-2.2}
in the form $\dist (Q,Q')\le r_*(x) \stackrel{\eqref{fff75}}\sim \diam Q\sim \diam{Q'}$.
We then prove  \eqref{e.eq-norms-2.1}, and let $y\in Q'$ be such that $y\in B_*(x)$.
By the Lipschitz continuity of $r_*$ we have
$$
|r_*(y)-r_*(x)| \leq \frac18 |x-y| \le \frac18 r_*(x),
$$
so that $\diam{Q'}\sim r_*(y) \sim r_*(x) \sim \diam Q$, that is, \eqref{e.eq-norms-2.1}.
We now argue that \eqref{e.eq-norms-2.1} and  \eqref{e.eq-norms-2.2} imply 
that
\begin{equation}\label{e.eq-norms-2.3}
\sup_{Q' \in \partn} \#\{Q \in \partn \,|\, Q' \leadsto Q\}\,\lesssim \, 1,\qquad \sup_{Q\in \partn} \#\{Q' \in \partn \,|\, Q' \leadsto Q\}\,\lesssim\, 1.
\end{equation}
We only prove the first estimate: From \eqref{e.eq-norms-2.2} we learn that $\cup_{Q:Q'\leadsto Q} Q \subset B_{C \diam {Q'}}(Q')$ for some generic $C=C(d)<\infty$ (which may change
from line to line in the estimates below),
whereas from \eqref{e.eq-norms-2.1} we learn that $|Q|\gtrsim \diam{Q'}^d$ for all $Q$ with $Q'\leadsto Q$. The combination
of these properties yields \eqref{e.eq-norms-2.3}.

We are in the position to conclude the proof of \eqref{e.equiv-rP-11}.
Let $Q \in \partn$. For all $x\in Q$, we have
$$
\fint_{B_*(x)} h \,\le \, \sum_{Q':Q'\leadsto Q} \frac{|Q'|}{|B_*(x)|} \fint_{Q'} h \stackrel{\eqref{e.eq-norms-2.1},\eqref{fff75}}\lesssim   \sum_{Q':Q'\leadsto Q} \fint_{Q'} h,
$$
so that
\begin{equation}\label{e.eq-norms-2.4}
\int \Big(\fint_{B_*(x)} h\Big)^{\frac p2} \,\le\, C^p\sum_{Q \in \partn} |Q|\Big( \sum_{Q':Q'\leadsto Q} \fint_{Q'} h \Big)^{\frac p2}.
\end{equation}
%
This implies for all $1\le p <\infty$
\begin{eqnarray*}
\int \Big(\fint_{B_*(x)} h\Big)^{\frac p2} dx &\le&C^p \sum_{Q\in \partn}   |Q|\sum_{Q':Q'\leadsto Q} \big(\sup_{Q\in \partn} \#\{Q' \in \partn \,|\, Q' \leadsto Q\}\big)^{\frac p2} \Big(\fint_{Q'} h  \Big)^{\frac p2}
\\
&\stackrel{\eqref{e.eq-norms-2.3}}\le &C^p\sum_{Q' \in \partn} \sum_{Q:Q'\leadsto Q} |Q|\Big(\fint_{Q'} h  \Big)^{\frac p2}
\\
&\stackrel{ \eqref{e.eq-norms-2.3},\eqref{e.eq-norms-2.1}}\le &C^p  \sum_{Q' \in \partn} |Q'|\Big(\fint_{Q'} h  \Big)^{\frac p2},
\end{eqnarray*}
as claimed.

\medskip
\substep{5.2} Proof that for all $1\le p<\infty$,
\begin{equation}
\Big(\int \Big( \fint_{B_{*}(x)} h\Big)^{\frac p2} dx\Big)^{\frac 2p} \,\gtrsim\, \Big(\sum_{Q\in \partn} |Q| \Big(\fint_Q h\Big)^{\frac p2}\Big)^{\frac 2p}.
\label{e.equiv-rP-12}
\end{equation}
Let $Q \in \partn$, $\ell=\diam Q$, and set $Q_r(x):=x+[-\frac{r}{2},\frac{r}{2})^d$.
Since $r_* \sim \ell$ on $Q$, for $0<\e\ll 1$ small enough, we have
\begin{equation}\label{zztop}
\fint_Q \Big(\fint_{B_*(x)} h\Big)^{\frac p2} dx\,\ge\,  C^{-p}\e^{d \frac p2  } \fint_Q \Big(\fint_{Q_{\e \ell}(x)} h\Big)^{\frac p2} dx,
\end{equation}
where $C$ denotes a constant depending only on $d$ (and that may change from line to line like in Substep~5.1).
W.l.o.g. we may assume that $\e$ is chosen such that $\{Q_{\frac12\e\ell}(z)\}_{z\in Z_\e}$ with $Z_\e:=\{z\in\frac12\e\ell\Z^d\,:\,Q_{\frac 12\e\ell}(z)\subset Q\}$ is a partition of $Q$ into disjoint cubes. Thus, \eqref{zztop} turns into
\begin{equation*}
  \fint_Q \Big( \fint_{B_{*}(x)} h\Big)^{\frac p2} dx \,\ge\, C^{-p}\e^{d\frac p2}\ell^{-d}\sum_{z\in Z_\e}\int_{Q_{\frac 12\e\ell}(z)}\Big(\fint_{Q_{\e \ell}(x)} h\Big)^{\frac p2} dx.
\end{equation*}
Since
$$
x\in Q_{\frac{1}{2}\e\ell}(z)\quad\Rightarrow\quad
 \Big(\fint_{Q_{\e \ell}(x)} h\Big)^{\frac p2}\,\ge\,2^{-d\frac{p}{2}}\Big(\fint_{Q_{\frac12 \e \ell}(z)} h\Big)^{\frac p2} \,,
$$
we conclude that
\begin{equation}\label{e.eq-norms-1.1}
  \fint_Q \Big( \fint_{B_{*}(x)} h\Big)^{\frac p2} dx \,\ge\, C^{-p}\e^{d\frac p2}\ \e^d\sum_{z\in Z_\e}\Big(\fint_{Q_{\frac12\e \ell}(z)} h\Big)^{\frac p2}.
\end{equation}
For $2\leq p<\infty$ Jensen's inequality yields
\begin{eqnarray*}
\fint_Q \Big(\fint_{B_*(x)} h\Big)^{\frac p2}\,\ge\,C^{-p} \e^{d\frac p2}  \Big(\e^d \sum_{z\in Z_\e}\fint_{Q_{\frac12\e \ell}(z)} h\Big)^{\frac p2}\, \ge\, C^{-p} \e^{d\frac p 2} \Big(\fint_Q h\Big)^{\frac p2},
\end{eqnarray*}
while for $1\le p<2$ the discrete estimate $\|\cdot\|_{\ell^\frac{2}{p}}\leq\|\cdot\|_{\ell^1}$ yields
\begin{eqnarray*}
\fint_Q \Big(\fint_{B_*(x)} h\Big)^{\frac p2}\,\ge\,C^{-p} \e^{d\frac p2}\e^d\Big(\sum_{z\in Z_\e}\fint_{Q_{\frac12\e \ell}(z)} h\Big)^{\frac p2}\, \ge\, C^{-p} \e^{d} \Big(\fint_Q h\Big)^{\frac p2}.
\end{eqnarray*}
Since $\e$ can be chosen only depending on $d$, \eqref{e.equiv-rP-12} follows.
\medskip

\step6 Proof of  \eqref{I1} for $p\ge 2$.

We now return from cubes to balls and start with the excess. 
Based on the Lipschitz continuity of $r_*$ we claim that for any point $x$
\begin{align}\label{fff84}
e(x)\lesssim\sup_{R> r_*(x)}\inf_\xi \Big(\fint_{B_R(x)}|\nabla u-\xi_i(e_i+\nabla \phi_i)|^2\Big).
\end{align}
The definition (\ref{fff65}) prompts us to prove that for any dyadic cube $D$ with $x\in D$ 
and such that there exists a $Q\in{\partn}$ with $Q\subset D$, we have
\begin{align*}
\frac{e(D)}{|D|}\lesssim\frac{e(B_R(x))}{|B_R|}\quad\mbox{for some}\;R\ge r_*(x).
\end{align*}
Since $Q\subset D$ for some $Q\in{\partn}$,
we have $\inf_Dr_*\lesssim \diam  Q\le \diam  D$ in view of (\ref{fff75}). 
By the Lipschitz continuity of $r_*$, $x\in D$ then yields $r_*(x)\lesssim \diam  D$,
which implies $D\subset B_R(x)$ for some $r_*(x)\le R\sim \diam  D$. This gives
both $e(D)\le e(B_R(x))$, cf.~\eqref{fff73}, and $|D| \gtrsim |B_R|$, so that the claim \eqref{fff84} follows.

\medskip

We continue to revert back to balls from cubes and look at the energy density.
By \eqref{e.equiv-rP-1} for $h=|\nabla u|^2$, we have 
\begin{align}\label{fff76}
\int(\fint_{B_{*}(x)}|\nabla u|^2)^\frac{p}{2}dx\lesssim\sum_{Q\in{\partn}} |Q| (\fint_Q|\nabla u|^2)^\frac{p}{2}\stackrel{(\ref{fff66})}{=}
\int\rho^p .
\end{align}
We may now conclude: We use (\ref{eq:SE:1}) with $\alpha=0$ and combine it with (\ref{fff84})
in the form
\begin{align*}
e(x)\lesssim\sup_{R>r_*(x)}\Big(\fint_{B_R(x)}|g|^2\Big)  .
\end{align*}
By \eqref{e.equiv-rP-1} for $p=2$, we have 
$$
\sup_{R>r_*(x)} \fint_{B_R(x)}|g|^2\,\lesssim\, \sup_{R>0} \fint_{B_R(x)} |g_*|^2,
$$
where $g_*(x):=\big(\fint_{B_{*}(x)} |g|^2\big)^\frac12$.
We now combine (\ref{fff73bis}) with the Maximal Function estimate applied  to $|g_*|^2$  with exponent $\frac p2>1$
to obtain for all $p>2$ 
\begin{align*}
\int \rho^p  \lesssim\int  \Big(\fint_{B_{*}(y)} |g|^2\Big)^\frac{p}{2}dy \,\lesssim \, \int   |g|^p,
\end{align*}
which in combination with (\ref{fff76}) yields the claim for $p>2$. 
For $p=2$, \eqref{fff76} takes the simpler form
\begin{align*}
\int \Big(\fint_{B_{*}(x)}|\nabla u|^2\Big) dx\sim \sum_{Q\in{\partn}} \int_Q|\nabla u|^2 =\int |\nabla u|^2,
\end{align*}
so that the result is a consequence of the simple energy estimate for \eqref{l1-0}.
The claim then follows by the Riesz-Thorin interpolation theorem.

\medskip

\step7 Proof of   \eqref{I1} for $1<p< 2$.

By  \eqref{e.equiv-rP-1}, it is enough to prove the claim by replacing the integral of averages on $\R^d$ by the sum of averages on the partition $\partn$.
We argue by a standard duality argument that appeals  to  \eqref{I1} for the dual problem (cf.~Remark~\ref{rem:duality}): By Step~6, for any exponent $2\le q<\infty$
and any decaying $v,h$ related through $-\nabla \cdot a^* \nabla v=\nabla \cdot h$, we have
\begin{equation}\label{e.for-duality}
\sum_{Q\in \partn} |Q|\Big(\fint_Q |\nabla v|^2\Big)^\frac q2 \,\lesssim \, \sum_{Q\in \partn} |Q|\Big(\fint_Q |h|^2\Big)^\frac q2.
\end{equation}
Now, by discrete duality, we have for our solution $u$ and all $1<p\le 2$ and $q=\frac{p}{p-1}$,
\begin{equation}\label{e.for-duality-2}
\Big(\sum_{Q\in \partn} |Q|\Big(\fint_Q |\nabla u|^2\Big)^\frac p2\Big)^\frac1p \,=\, \sup_{h\not\equiv 0} \frac{\sum_{Q \in \partn} |Q| \fint_Q \nabla u \cdot h }  {\Big( \sum_{Q\in \partn} |Q|\Big(\fint_Q |h|^2\Big)^\frac q2\Big)^\frac1q} .
\end{equation}
Let $h$ be a test function.
We then consider the solution $v$ of $-\nabla \cdot a^* \nabla v=\nabla \cdot h$  and compute
using the defining equations for $u$ and $v$, and H\"older's inequality with exponents~$(p,q)$
\begin{eqnarray*}
\Big|\sum_{Q \in \partn} |Q| \fint_Q \nabla u \cdot h\Big| &=&\Big|\int \nabla u \cdot h\Big|\,=\,\Big|\int \nabla u \cdot a^* \nabla v\Big|\,=\,\Big|\int \nabla v\cdot g\Big|
\\
&\le&\sum_{Q\in \partn} |Q| \Big(\fint_Q |\nabla v|^2\Big)^\frac12 \Big(\fint_Q |g|^2\Big)^\frac12 
\\
&\le&\bigg( \sum_{Q\in \partn} |Q| \Big(\fint_Q |\nabla v|^2\Big)^\frac q2\bigg)^\frac1q \bigg( \sum_{Q\in \partn} |Q| \Big(\fint_Q |g|^2\Big)^\frac p2\bigg)^\frac1p
\\
&\stackrel{\eqref{e.for-duality}}{\lesssim}&\bigg( \sum_{Q\in \partn} |Q| \Big(\fint_Q |h|^2\Big)^\frac q2\bigg)^\frac1q \bigg( \sum_{Q\in \partn} |Q| \Big(\fint_Q |g|^2\Big)^\frac p2\bigg)^\frac1p,
\end{eqnarray*}
from which   \eqref{I1}  on the level of $\partn$ follows by \eqref{e.for-duality-2} and the arbitrariness of $h$

\subsection{Proof of Corollary~\ref{cor:Lp-weight}: Large-scale weighted Calder\'on-Zygmund estimates}

We split the proof into four steps.

\medskip

\step1 Decay property in $L^2$. 

Suppose in addition that $\supp g\subset B_r$ for some $r\ge \underline{r_*}(0)$.
Then we claim for all $R\ge r$
\begin{align}\label{EM01}
\big(\frac{1}{R^d}\int_{|x|>R}|\nabla u|^2\big)^\frac{1}{2}
\lesssim(\frac{r}{R})^d\big(\frac{1}{r^d}\int_{|x|<r}|g|^2\big)^\frac{1}{2}.
\end{align}
We argue by duality: Given a square-integrable vector field $h$ supported in $\{|x|>R\}$ we denote by $v$ the Lax-Milgram
solution of $-\nabla\cdot a^*\nabla v=\nabla\cdot h$, so that we have the identity
$\int h\cdot\nabla u=\int g\cdot\nabla v$, which by the support condition on $g$ implies
\begin{align*}
\int h\cdot\nabla u\le\big(\int_{|x|<r}|g|^2\big)^\frac{1}{2}\big(\int_{|x|<r}|\nabla v|^2\big)^\frac{1}{2}.
\end{align*}
Since by the support condition on $h$, $v$ is $a^*$-harmonic in $\{|x|<R\}$, we may apply (\ref{o.30})
(recall $r\ge\underline{r_*}(0)\ge r_*(0)$)
to the effect of $\fint_{|x|<r}|\nabla v|^2\lesssim\fint_{|x|<R}|\nabla v|^2$.
Combined with the energy estimate, this yields
\begin{align*}
\int_{|x|<r}|\nabla v|^2\lesssim(\frac{r}{R})^{d}\int|h|^2.
\end{align*} 
Choosing $h$ to be the restriction of $\nabla u$ on $\{|x|>R\}$, we obtain (\ref{EM01}).

\medskip

\step2 Decay property in $L^p$. 

As in Step 1 suppose that $\supp g\subset B_r$ for some $r\ge \underline{r_*}(0)$.
Then we claim for all $R\ge 4r$ 
\begin{align}\label{EM02}
\Big(\frac{1}{R^d}\int_{|x|>R}\big(\fint_{B_*(x)}|\nabla u|^2\big)^\frac{p}{2}dx\Big)^\frac{1}{p}
\lesssim(\frac{r}{R})^d\Big(\frac{1}{r^d}\int_{|x|<4r}\big(\fint_{B_*(x)}|g|^2\big)^\frac{p}{2}dx\Big)^\frac{1}{p}.
\end{align}
By a decomposition of $\{|x|>R\}$ in dyadic annuli, it is enough to show for $R\ge 4r$
\begin{align*}
\sup_{R<|x|<2R}\fint_{B_*(x)}|\nabla u|^2
\lesssim(\frac{r}{R})^{2d}\Big(\fint_{|x|<4r}\big(\fint_{B_*(x)}|g|^2\big)^\frac{p}{2}dx\Big)^\frac{2}{p}.
\end{align*}
We note that for $x$ with $R<|x|<2R$ we have $\underline{r_*}(x)\le \underline{r_*}(0)+\frac{1}{8}|x|\le \frac{R}{2}$, so that
$B_*(x)\subset B_{\frac R2}(x)\subset\{\frac{R}{2}<|y|<4R\}$, which in turn is contained in $\{|y|>r\}$. Hence by our
support assumption on $g$, $u$ is $a$-harmonic so that by (\ref{o.30})
\begin{align*}
\sup_{R<|x|<2R}\fint_{B_*(x)}|\nabla u|^2\lesssim\sup_{R<|x|<2R}\fint_{B_\frac{R}{2}(x)}|\nabla u|^2
\lesssim\fint_{\frac{R}{2}<|y|<4R}|\nabla u|^2.
\end{align*}
According to (\ref{EM01}) in the previous step (applied to three neighboring dyadic annuli) we obtain
\begin{align*}
\fint_{\frac{R}{2}<|y|<4R}|\nabla u|^2\lesssim(\frac{r}{R})^{2d}\fint_{|x|<r}|g|^2.
\end{align*}
Finally by \eqref{e.equiv-rP-2}, c.f.~Step~5 in the proof of Corollary \ref{cor:Lp}, in conjunction with $B_*(x)\cap\{|y|<r\}=\emptyset$ provided $|x|>4r$
(which in turn relies on $\underline{r_*}(x)\le r+\frac{1}{8}|x|$),
we have
\begin{align*}
\fint_{|x|<r}|g|^2\lesssim\fint_{|x|<4r}\fint_{B_*(x)}|g|^2\le\Big(\fint_{|x|<4r}\big(\fint_{B_*(x)}|g|^2\big)^\frac{p}{2}\Big)^\frac{2}{p}.
\end{align*}
%

\medskip

\step3 Under the assumptions of the corollary we claim for $R\ge8\underline{r_*}(0)$
\begin{align}\nonumber
  &{\Big(\int_{|x|> R}\big(\fint_{B_*(x)}|\nabla u|^2\big)^\frac{p}{2}dx\Big)^\frac{1}{p}}
\\\label{EM04}
&\qquad\lesssim\Big(\int_{|x|> R}\big(\fint_{B_*(x)}|g|^2\big)^\frac{p}{2}dx\Big)^\frac{1}{p}
+\Big(\int_{|x|< R}(\frac{|x|}{R})^\gamma\big(\fint_{B_*(x)}|g|^2\big)^\frac{p}{2}dx\Big)^\frac{1}{p}.
\end{align}
W.l.o.g.~we may assume that $R$ is a dyadic multiple of $\underline{r_*}(0)$; also $r$ below runs over dyadic multiple of $\underline{r_*}(0)$.
We decompose $g$ into
\begin{align*}
&g_R:=I(|x|>\frac{R}{4})g,\\
&g_r:=I(\frac{r}{2}<|x|<r)g\;\;\mbox{for}\;\underline{r_*}(0)<r\le\frac{R}{4},\quad g_{\underline{r_*}(0)}:=I(|x|<\underline{r_*}(0))g.
\end{align*}
Let $u_r$ denote the corresponding Lax-Milgram solutions (to $-\nabla \cdot a \nabla u_r=\nabla \cdot g_r$) so that we have by the triangle inequality
\begin{eqnarray}
\Big(\int_{|x|>R}\big(\fint_{B_*(x)}|\nabla u|^2\big)^\frac{p}{2}dx\Big)^\frac{1}{p}
&\le&\Big(\int_{|x|>R}\big(\fint_{B_*(x)}|\nabla u_R|^2\big)^\frac{p}{2}dx\Big)^\frac{1}{p}\label{EM05}\\
&&+\sum_{\underline{r_*}(0)\le r\le\frac{R}{4}}\Big(\int_{|x|>R}\big(\fint_{B_*(x)}|\nabla u_r|^2\big)^\frac{p}{2}dx\Big)^\frac{1}{p}\label{EM06}.
\end{eqnarray}
We start with the RHS term in line (\ref{EM05}):
Replacing the integration over the set $\{|x|>R\}$ by the integration over the whole space and appealing to (\ref{I1}) in Corollary
\ref{cor:Lp} we have
\begin{align*}
\Big(\int_{|x|>R}\big(\fint_{B_*(x)}|\nabla u_R|^2\big)^\frac{p}{2}dx\Big)^\frac{1}{p}
\lesssim \Big(\int\big(\fint_{B_*(x)\cap\{|y|>\frac{R}{4}\}}|g|^2\big)^\frac{p}{2}dx\Big)^\frac{1}{p}.
\end{align*}
Using that $B_*(x)\cap\{|y|>\frac{R}{4}\}=\emptyset$ for $|x|<\frac{R}{8}$, we see that this term is indeed contained in
(both terms of) the RHS of (\ref{EM04}).
We now turn to the terms in line (\ref{EM06}). Since for $r\le\frac{R}{4}$, $g_r$ is supported in $\{|x|<r\}$ we apply (\ref{EM02}) in the previous step
to the couple $(u_r,g_r)$:
\begin{eqnarray}
\Big(\int_{|x|>R}\big(\fint_{B_*(x)}|\nabla u_r|^2\big)^\frac{p}{2}dx\Big)^\frac{1}{p}
&\lesssim& (\frac{r}{R})^d \Big((\frac{R}{r})^d\int_{|x|<4r}\big(\fint_{B_*(x)}|g_r|^2\big)^\frac{p}{2}dx\Big)^\frac{1}{p}\nonumber\\
&\le& (\frac{r}{R})^{d-\frac{1}{p}(d+\gamma)}\Big(\int_{|x|<R}(\frac{|x|}{R})^\gamma\big(\fint_{B_*(x)}|g|^2\big)^\frac{p}{2}dx\Big)^\frac{1}{p}.
\nonumber
\end{eqnarray}
By our assumption on $\gamma$, the exponent is positive, so that the sum over dyadic $r<R$ converges, to the effect that
also the contribution from (\ref{EM06}) is estimated by the (second term on the) RHS of (\ref{EM04}). 

\medskip

\step4 Conclusion. 

We set for abbreviation $U:=\big(\fint_{B_*(x)}|\nabla u|^2\big)^\frac{p}{2}$,
$G:=\big(\fint_{B_*(x)}|g|^2\big)^\frac{p}{2}$, and $r:=8\underline{r_*}(0)$; we fix a $\gamma'\in(\gamma,d(p-1))$. 
By (\ref{EM04}) in the previous step and by (\ref{I1}) in Corollary \ref{cor:Lp} we have
\begin{align}\label{EM09}
\int_{|x|>R}U\lesssim\int_{|x|>R}G+\int_{|x|<R}(\frac{|x|}{R})^{\gamma'}G\;\;\mbox{for}\;R\ge r,\quad
\int U\lesssim\int G.
\end{align}
From the properties (\ref{EM08}) of $\omega$ we infer by division into dyadic annuli
\begin{align*}
\int_{|x|>r}\omega(|x|)U\sim\omega(r)\int_{|x|>r}U+\sum_{R>r}(\omega(R)-\omega(\frac{R}{2}))\int_{|x|>R}U,
\end{align*}
which we also apply with $U$ replaced by $G$, so that by (\ref{EM09}) we get
%
\begin{equation*}
{\int_{|x|>r}\omega(|x|)U\lesssim\int_{|x|>r}\omega(|x|)G}
+\omega(r)\int_{|x|<r}(\frac{|x|}{r})^{\gamma'}G
+\sum_{R>r}(\omega(R)-\omega(\frac{R}{2}))\int_{|x|<R}(\frac{|x|}{R})^{\gamma'}G.
\end{equation*}
Appealing to (\ref{EM08}) in form of 
$\omega(r)I(r>|x|)(\frac{|x|}{r})^{\gamma'}\le\omega(|x|)$ (where we need $\gamma\le\gamma'$)
and in form of $\sum_{R>r}\omega(R)I(R>|x|)(\frac{|x|}{R})^{\gamma'}$ $\lesssim\omega(|x|)$ (where we need $\gamma<\gamma'$),
and recalling $r=8\underline{r_*}(0)$, this turns into
\begin{align*}
\int_{|x|>8\underline{r_*}(0)}\omega(|x|)U\lesssim\int\omega(|x|)G.
\end{align*}
Combining the latter with the last estimate in (\ref{EM09}) and appealing to
$\omega(|x|+\underline{r_*}(0))\lesssim I(|x|>8\underline{r_*}(0))\omega(|x|)+\omega(\underline{r_*}(0))$ 
and $\omega(|x|)+\omega(\underline{r_*}(0))\lesssim\omega(|x|+\underline{r_*}(0))$ (which both
follow from (\ref{EM08})), we obtain (\ref{EM10}) in form of 
$\int\omega(|x|+\underline{r_*}(0))U$ $\lesssim\int\omega(|x|+\underline{r_*}(0))G$.


\section{Optimal stochastic integrability of $r_*$: Proof of deterministic results}


\subsection{Proof of Proposition~\ref{p1}: From modified corrector to corrector}

We split the proof into three steps and start with a reduction argument.

\medskip

\step{1} Reduction.

For notational ease, we replace $\fint_{B_R}|f-\fint_{B_R}f|^2$ by $\inf_{c}\fint_{B_R}|f-c|^2$.
We start with a couple of reductions: We first claim that it is enough to establish
(\ref{gr01}) under the additional condition $R\ge\max\{r_*,r_{**}\}$. Indeed, if $C_0=C_0(d,\lambda,\alpha)$ denotes the constant 
in (\ref{O.1}), then, by definition of $r_{**}$, there exists a constant $C_1=C_1(d,\lambda,\alpha,\nu)$ such that
\begin{align*}
\frac{1}{R^2}\inf_c\fint_{B_R}|(\phi,\sigma)-c|^2\le\frac{1}{2^{d+2}}\frac{1}{C_0}
\quad\mbox{for all}\;R\ge\max\{r_*,C_1r_{**}\}.
\end{align*}
Because of the elementary inequality $\frac{1}{r^2}\fint_{B_r}$ $\le(\frac{R}{r})^{d+2}$ $\frac{1}{R^2}\fint_{B_R}$ this implies
in turn 
\begin{align*}
\frac{1}{R^2}\inf_c\fint_{B_R}|(\phi,\sigma)-c|^2\le\frac{1}{C_0}
\quad\mbox{for all}\;R\ge\frac{1}{2}\max\{r_*,C_1r_{**}\}.
\end{align*}
Since we may take $r_*$ to be the smallest radius with (\ref{O.1}), we obtain\linebreak $r_*$ $\le \frac{1}{2}\max\{r_*,C_1r_{**}\}$ and thus
$r_*$ $\le C_1r_{**}$. This yields both (\ref{gr02}) and the fact that (\ref{gr01}) holds for all $R\ge C_1 r_{**}$,
and thus, at the expense of a worse constant, for all $R\ge r_{**}$.

\medskip

Moreover, using the elementary inequality $\frac{1}{r^2}\fint_{B_r}$ $\le(\frac{R}{r})^{d+2}$ $\frac{1}{R^2}\fint_{B_R}$ again, we find that it suffices to establish (\ref{gr01}) for dyadic radii $R$.

\medskip

We now turn to the last reduction argument. 
By qualitative homogenization (see for instance \cite[Theorem~1]{Gloria-Habibi-16}), we have $$\expec{|\nabla (\phi_{T},\sigma_{T})-\nabla (\phi,\sigma)|^2} \stackrel{T\uparrow \infty}\longrightarrow 0,$$ and thus by stationarity and Poincar\'e's inequality,
\begin{equation*}
  \expec{\frac{1}{R^2}\inf_c\fint_{B_R}|(\phi_T-\phi,\sigma_T-\sigma)-c|^2}\lesssim\expec{\fint_{B_R}|\nabla(\phi_T,\sigma_T)-\nabla(\phi,\sigma)|^2}\stackrel{T\uparrow \infty}\longrightarrow 0.
\end{equation*}
Hence, there exists a sequence $\sqrt{T_k}$ (that is, a subsequence of $(2^m)_{m\in \N}$) such that for all dyadic radii $R$ we have 
\begin{equation*}
\lim\limits_{k\to\infty}\frac{1}{R^2}\inf_c\fint_{B_R}|(\phi_{T_k}-\phi,\sigma_{T_k}-\sigma)-c|^2=0\qquad\text{$\expec{\cdot}$-almost surely}.
\end{equation*}
We may thus conclude that it suffices to establish (\ref{gr01}) in the modified form:  For all 
$T_0$ with $\sqrt T_0$ dyadic we have
\begin{align}\label{gr03}
\frac{1}{R^2}\inf_c\fint_{B_R}|(\phi_{T_0},\sigma_{T_0})-c|^2\lesssim(\frac{r_{**}}{R})^{2\nu}
\quad\mbox{for}\;\max\{r_*,r_{**}\}\le R\le \sqrt{T_0},
\end{align}
where $\lesssim$ stands for $\le C(d,\lambda,\alpha,\nu)$.

\medskip

\step2 Proof of \eqref{gr03} by a Campanato-iteration argument.

\nopagebreak

We shall prove  \eqref{gr03} based on the one-step yet iterable estimate
\begin{align}\nonumber
\lefteqn{\frac{1}{r^2}\Big(\inf_c\fint_{B_r}(\phi_{T_0}-c)^2
+(\frac{r}{R})^d\inf_c\fint_{B_r}|\sigma_{T_0}-c|^2\Big)}\\
&\lesssim(\frac{1}{R^2}+(\frac{R}{r})^d\frac{1}{T_0}\big)\Big(\inf_c\fint_{B_R}(\phi_{T_0}-c)^2
+(\frac{r}{R})^d\inf_c\fint_{B_R}|\sigma_{T_0}-c|^2\Big)\nonumber\\
&+(\frac{R}{r})^{d+2}\fint_{B_R}\frac{1}{T}|(\phi_T,\sigma_T)|^2\quad
\mbox{for}\;\max\{r_*,r_{**}\}\le r\le R\le\sqrt{T_0}\;\mbox{with}\;T=R^2.\label{gr04}
\end{align}
Let us argue how to pass from (\ref{gr04}) to (\ref{gr03}). For some ratio $M=\frac Rr$ to be fixed later we introduce the abbreviation
$E(r)$ $:=\frac{1}{r^2}\inf_c\fint_{B_r}(\phi_{T_0}-c)^2$
$+M^{-d}\frac{1}{r^2}\inf_c\fint_{B_r}|\sigma_{T_0}-c|^2$, so that (\ref{gr04}) turns into
\begin{align*}
E(r)\;\lesssim\;(1+\frac{M^{d+2}r^2}{T_0})E(Mr)+M^{d+2}(\frac{r_{**}}{Mr})^{2\nu}\quad
\mbox{for}\;\max\{r_*,r_{**}\}\le r\le \frac{\sqrt{T_0}}{M},
\end{align*}
where we used (\ref{gr05}) to estimate the last term in (\ref{gr04}). Restricting the range of $r$'s a bit, this simplifies to
\begin{align*}
E(r)\;\lesssim\;E(Mr)+M^{d+2}(\frac{r_{**}}{Mr})^{2\nu}\quad
\mbox{for}\;\max\{r_*,r_{**}\}\le r\le (\frac{1}{M})^{\frac{d}{2}+1}\sqrt{T_0},
\end{align*}
which we multiply with $r^{2\nu}$ and make the constant $C=C(d,\lambda,\alpha,\nu)$ explicit:
\begin{align*}
r^{2\nu}E(r)\;\le C\big(M^{-2\nu}(Mr)^{2\nu}E(Mr)+M^{d+2}(\frac{r_{**}}{M})^{2\nu}\big).
\end{align*}
We now choose $M=M(d,\lambda,\alpha,\nu)$ so large that $CM^{-2\nu}=\frac{1}{2}$, which is possible
because of $\nu>0$, and so obtain
\begin{align*}
\sup_{\max\{r_*,r_{**}\}\le r\le\sqrt{T_0}}r^{2\nu}E(r)\;
\lesssim \sup_{(\frac{1}{M})^{\frac{d}{2}+1}\sqrt{T_0}\le R\le\sqrt{T_0}}R^{2\nu}E(R)
+r_{**}^{2\nu}.
\end{align*}
It is here that we use $T_0<\infty$, so that we only have to take the supremum over a finite range of radii
and may therefore absorb the RHS into the LHS without any a priori assumption of finiteness.
In view of the definition of $E(r)$, which in particular yields using that $R\ge r_{**}$
\begin{align*}
\sup_{(\frac{1}{M})^{\frac{d}{2}+1}\sqrt{T_0}\le R\le\sqrt{T_0}}R^{2\nu}E(R)
\lesssim\sqrt{T_0}^{2\nu}\fint_{B_{\sqrt{T_0}}}\frac{1}{T_0}|(\phi_{T_0},\sigma_{T_0})|^2
\stackrel{(\ref{gr05})}{\lesssim}r_{**}^{2\nu},
\end{align*}
this last estimate turns into (\ref{gr03}).

\medskip

\step3 Proof of \eqref{gr04}

\noindent By the triangle inequality in $L^2$ and $\fint_{B_r}\le(\frac{R}{r})^d\fint_{B_R}$,
it is enough to show
\begin{align*}
\lefteqn{\frac{1}{r^2}\Big(\inf_c\fint_{B_r}(\phi_{T_0}-\phi_T-c)^2
+(\frac{r}{R})^d\inf_c\fint_{B_r}|\sigma_{T_0}-\sigma_T-c|^2\Big)}\nonumber\\
&\lesssim(\frac{1}{R^2}+(\frac{R}{r})^d\frac{1}{T_0}\big)
\Big(\inf_c\fint_{B_R}(\phi_{T_0}-\phi_T-c)^2
+(\frac{r}{R})^d\inf_c\fint_{B_R}|\sigma_{T_0}-\sigma_T-c|^2\Big)\nonumber\\
&+(\frac{R}{r})^d\fint_{B_R}\frac{1}{T}|(\phi_T,\sigma_T)|^2\quad
\mbox{for}\;r_*\le r\le\frac{1}{4}R\le\sqrt{T_0}\;\mbox{with}\;T=R^2.
\end{align*}
(The exponent $d+2$ in the last RHS term of (\ref{gr04}) comes from the control of $\frac1{r^2} \fint_{B_r}\phi_T^2$.)
By Poincar\'e's inequality the latter follows from
\begin{align}\label{gr07}
\lefteqn{\fint_{B_r}|\nabla(\phi_{T_0}-\phi_T)|^2+(\frac{r}{R})^d\fint_{B_r}|\nabla(\sigma_{T_0}-\sigma_T)|^2}\nonumber\\
&\lesssim
(\frac{1}{R^2}+(\frac{R}{r})^d\frac{1}{T_0}\big)
\Big(\inf_c\fint_{B_R}(\phi_{T_0}-\phi_T-c)^2
+(\frac{r}{R})^d\inf_c\fint_{B_R}|\sigma_{T_0}-\sigma_T-c|^2\Big)\nonumber\\
&+(\frac{R}{r})^d\fint_{B_R}\frac{1}{T}|(\phi_T,\sigma_T)|^2,
\end{align}
always in the same range. We note that we may split (\ref{gr07}) into
\begin{align}\label{gr08}
\lefteqn{\fint_{B_r}|\nabla(\phi_{T_0}-\phi_T)|^2}\nonumber\\
&\lesssim(\frac{1}{R^2}+(\frac{R}{r})^d\frac{1}{T_0}\big)\inf_c\fint_{B_R}(\phi_{T_0}-\phi_T-c)^2
+(\frac{R}{r})^d\fint_{B_R}\frac{1}{T}\phi_T^2
\end{align}
and
\begin{align}\label{gr09}
\fint_{B_r}|\nabla(\sigma_{T_0}-\sigma_T)|^2
&\lesssim(\frac{1}{R^2}+(\frac{R}{r})^d\frac{1}{T_0}\big)
\inf_c\fint_{B_\frac{R}{2}}|\sigma_{T_0}-\sigma_T-c|^2\nonumber\\
&+(\frac{R}{r})^d\big(\fint_{B_\frac{R}{2}}\frac{1}{T}|\sigma_T|^2+\fint_{B_\frac{R}{2}}|\nabla(\phi_{T_0}-\phi_T)|^2\big).
\end{align}
Indeed, we first use (\ref{gr08}) for $r=\frac{R}{2}$, insert the result into (\ref{gr09}) and multiply with
$(\frac{r}{R})^d$; we then add (\ref{gr08}) to it to obtain (\ref{gr07}).

\medskip

We claim that both (\ref{gr08}) and (\ref{gr09}) may be inferred from the following a priori estimate: 
Suppose the functions $u$, $f$, and the vector field $g$ are related by
\begin{align}\label{gr10}
\frac{1}{T_0}u-\nabla\cdot a\nabla u=f+\nabla\times g,
\end{align}
then we have
\begin{align}\label{gr11}
\fint_{B_r}|\nabla u|^2&\lesssim(\frac{1}{R^2}+(\frac{R}{r})^d\frac{1}{T_0}\big)
\inf_c\fint_{B_R}(u-c)^2\nonumber\\
&+(\frac{R}{r})^d\big(R^2\fint_{B_R}f^2+\fint_{B_R}|g|^2\big)\quad\mbox{for}\;r_*\le r\le\frac{R}{2}.
\end{align}
In order to obtain (\ref{gr08}), we apply this to $u=\phi_{T_0}-\phi_T$, which by (\ref{o14}) satisfies
(\ref{gr10}) with $g=0$ and $f=(\frac{1}{T}-\frac{1}{T_0})\phi_T$, recalling that $T=R^2$. 
For (\ref{gr09}), we apply (\ref{gr11}) to $u=\sigma_{T_0}-\sigma_T$ and
the identity matrix playing the role of $a$ (so that trivially $r_*=0$) and $R$ replaced by $\frac{R}{2}$. 
Indeed, by (\ref{o17}) we have (\ref{gr10}) with $g=a\nabla(\phi_{T_0}-\phi_T)$
and $f=(\frac{1}{T}-\frac{1}{T_0})\sigma_T$.

\medskip

\step{4} Argument for the a priori estimate (\ref{gr11}). 

\noindent
We start by applying Caccioppoli estimate \eqref{e.add-Caccio} to (\ref{gr10}).
Because of the presence of the massive term, the Caccioppoli estimate is slightly more subtle:
We test (\ref{gr10}) with $\eta^2(u-\bar u)$, where $\eta$ is a smooth cut-off for $B_\frac{R}{2}$
in $B_R$ and where $\bar u:=\frac{\int\eta^2u}{\int\eta^2}$ is the corresponding spatial average, to the effect of
$\int\eta^2(u-\bar u) u$ $=\int\eta^2(u-\bar u)^2$. Hence we obtain
\begin{eqnarray*}
\fint_{B_R}\big(\frac{1}{T_0}\eta^2(u-\bar u)^2+|\nabla(\eta(u-\bar u))|^2\big) 
&\lesssim&\fint_{B_R}\big(\frac{1}{R^2}(u-\bar u)^2+R^2f^2+|g|^2\big)
\\
&\lesssim &\inf_c\fint_{B_R}\big(\frac{1}{R^2}(u-c)^2+R^2f^2+|g|^2\big).
\end{eqnarray*}
From testing (\ref{gr10}) with $\eta^2$ and rewriting the elliptic term like
$\nabla\eta^2\cdot a\nabla u$ $=2\nabla\eta\cdot a\nabla(\eta(u-\bar u))$ $-2(u-\bar u)\nabla\eta\cdot a\nabla\eta$
we obtain
\begin{eqnarray*}
R^2(\frac{1}{T_0}\bar u)^2
&\lesssim& \fint_{B_R}\big(\frac{1}{R^2}(u-\bar u)^2+R^2f^2+|g|^2+|\nabla(\eta(u-\bar u))|^2\big)
\\
&\lesssim& \inf_c\fint_{B_R}\big(\frac{1}{R^2}(u-c)^2+R^2f^2+|g|^2+|\nabla(\eta(u-\bar u))|^2\big).
\end{eqnarray*}
The combination of these two estimates yields
\begin{align}\label{gr13}
{R^2(\frac{1}{T_0}\bar u)^2+\fint_{B_\frac{R}{2}}\big(\frac{1}{T_0}(u-\bar u)^2+|\nabla u|^2\big)}\,\lesssim\inf_c\fint_{B_R}\big(\frac{1}{R^2}(u-c)^2+R^2f^2+|g|^2\big).
\end{align}
We now split $u$ on $B_\frac{R}{2}$ into two 
functions $v$ and $w$ defined through the auxiliary boundary value problems
\begin{align*}
\begin{array}{rl}
-\nabla\cdot a\nabla v=-\frac{1}{T_0}\bar u\;\quad\mbox{in}\;B_\frac{R}{2},&
v=u\;\mbox{on}\;\partial B_\frac{R}{2},\\
-\nabla\cdot a\nabla w=-\frac{1}{T_0}(u-\bar u)+f+\nabla\times g\;\quad\mbox{in}\;B_\frac{R}{2},&
w=0\;\mbox{on}\;\partial B_\frac{R}{2}.
\end{array}
\end{align*}
By the energy estimate, combined with Poincar\'e's estimate with vanishing boundary conditions, we have
\begin{align}
\fint_{B_\frac{R}{2}}|\nabla v|^2&\lesssim R^2(\frac{1}{T_0}\bar u)^2+\fint_{B_\frac{R}{2}}|\nabla u|^2,\label{gr14}\\
\fint_{B_\frac{R}{2}}|\nabla w|^2&\lesssim\fint_{B_\frac{R}{2}}\big(\frac{R^2}{T_0^2}(u-\bar u)^2+R^2f^2+|g|^2\big).\label{gr15}
\end{align}
In order to apply the Schauder theory from Corollary \ref{cor:Lip} to $v$, we note that its RHS 
may be rewritten as $-\nabla\cdot a\nabla v$ $=\nabla\cdot h$ with $h:=\frac{1}{T_0}\bar u\frac{x}{d}$.
Because of
\begin{equation*}
\sup_{1\le\rho\le\frac{R}{2}}(\frac{R}{\rho})^{2\alpha}\inf_{\xi\in\mathbb{R}^d}\fint_{B_\rho}|h-\xi|^2
\stackrel{\alpha\le 1}{\lessim} R^2(\frac{1}{T_0}\bar u)^2,
\end{equation*}
we obtain from (\ref{eq:SE:2}) that for $r_*\le r\le \frac{R}{2}$
\begin{align*}
\fint_{B_r}|\nabla v|^2\lesssim R^2(\frac{1}{T_0}\bar u)^2+\fint_{B_\frac{R}{2}}|\nabla v|^2.
\end{align*}
Hence (\ref{gr14}) \& (\ref{gr15}) turn into
\begin{align*}
\fint_{B_r}|\nabla v|^2&\lesssim R^2(\frac{1}{T_0}\bar u)^2+\fint_{B_\frac{R}{2}}|\nabla u|^2,\\
\fint_{B_r}|\nabla w|^2&\lesssim(\frac{R}{r})^d\fint_{B_\frac{R}{2}}\big(\frac{R^2}{T_0^2}(u-\bar u)^2+R^2f^2+|g|^2\big).
\end{align*}
Inserting (\ref{gr13}) into these estimates, we obtain
\begin{align*}
\fint_{B_r}|\nabla v|^2&\lesssim\inf_c\fint_{B_R}\big(\frac{1}{R^2}(u-c)^2+R^2f^2+|g|^2\big),\\
\fint_{B_r}|\nabla w|^2&\lesssim(\frac{R}{r})^d
\inf_c\fint_{B_R}\big(\frac{1}{T_0}(u-c)^2+R^2f^2+|g|^2\big).
\end{align*}
By the triangle inequality in $L^2$, this yields (\ref{gr11}).


\subsection{Proof of Proposition~\ref{p1bis}: Localization of averages of the modified corrector}

The main building block is the following localized energy estimate.
Suppose the function $u$ is of the class $\sup_{y}\int_{B_1(y)}(u^2+|\nabla u|^2)<\infty$ and 
satisfies $\frac{1}{T}u-\nabla\cdot a\nabla u=\frac{1}{T}f+\nabla\cdot h$ for some scalar field $f$
and some vector field $h$. Then we have
\begin{align}\label{cw16}
\int\omega_T(\frac{1}{T}u^2+|\nabla u|^2)\lesssim\int\omega_T(\frac{1}{T}f^2+|h|^2).
\end{align}
Indeed, this follows from testing the equation with $\eta^2u$ and arguing
like in case of the Caccioppoli estimate, cf.~Appendix~\ref{append:Cacc}, that
$\int(\frac{1}{T}(\eta u)^2+|\nabla(\eta u)|^2)$ $\lesssim\int(\frac{1}{T}f^2+\eta^2|h|^2+|\nabla\eta|^2u^2)$.
We then use this for $\eta=\sqrt{T}^\frac{d}{2}\omega_\frac{T}{2}$ and note that $\eta^2\sim\omega_T$ and
$|\nabla\eta|^2\ll\frac{1}{T}\omega_T$, 
provided the constant $C=C(d,\lambda)$ in (\ref{cw15}) is chosen large enough.

\medskip

We now seek to apply (\ref{cw16}) to $u=\phi_T-\phi_t$, which by (\ref{o14}) satisfies
$\frac{1}{T}u-\nabla\cdot a\nabla u=(\frac{1}{t}-\frac{1}{T})\phi_t$, but want to
bring $\frac{1}{t}\phi_t$ in divergence-form.
We denote the convolution of a function $f$ with the (centered) Gaussian 
$G_t(z)=(\frac{1}{\sqrt{2\pi t}})^d\exp(-\frac{|z|^2}{2t})$ of variance $t$ by $f_{*t}$. Since then $\partial_t f_{*t}$
$=\frac{1}{2}\triangle f_{*t}$
we have $\frac{1}{t}\phi_t$ 
$=\frac{1}{t}(\phi_t)_{*t}-\frac{1}{2t}\int_0^t\triangle(\phi_t)_{*\tau}d\tau$, so that
using (\ref{o14}) and (\ref{o16}) we obtain
\begin{align*}
(\frac{1}{t}-\frac{1}{T})\phi_t=\nabla\cdot h\quad\mbox{with}\quad
h:=(1-\frac{t}{T})\big((q_t-\langle q_t\rangle)_{*t}
-\frac{1}{2t}\int_0^t(\nabla\phi_t)_{*\tau}d\tau\big)\nonumber.
\end{align*}
We now may apply (\ref{cw16}) to $u=\phi_T-\phi_t$ and
the above $h$  
so that by the triangle inequality in $L^2(\omega_T)$ and by the definition of $g_t$, cf. \eqref{cw01},
\begin{align}\nonumber
\lefteqn{\int\omega_T(\frac{1}{T}(\phi_T-\phi_t)^2+|\nabla(\phi_T-\phi_t)|^2)}\\
&\lesssim\int\omega_T|(q_t-\langle q_t\rangle)_{*t}|^2
+\Big(\frac{1}{t}\int_0^t\big(\int\omega_T|(\nabla\phi_t)_{*\tau}|^2\big)^\frac{1}{2}d\tau\Big)^2\nonumber\\
&\stackrel{(\ref{cw01})}{\lesssim}
\frac{1}{t}\int\omega_T|(g_t)_{*t}|^2+t\int\omega_T|(\triangle g_t)_{*t}|^2
+\Big(\frac{1}{t}\int_0^t\big(\int\omega_T|(\nabla\phi_t)_{*\tau}|^2\big)^\frac{1}{2}d\tau\Big)^2.\label{cw17}
\end{align}

\medskip

In order to conclude we appeal to a couple of properties of the convolution operation.
We note that because of Jensen's inequality in form of $(f_{*t})^2\le(f^2)_{*t}$ and the
dominance of Gaussians by exponentials in form of $(\omega_T)_{*t}\lesssim\omega_T$ for $t\le T$ (up to increasing the constant $C$ in \eqref{cw15}
which we implicitly assume without changing notation) we have
\begin{align*}
\int\omega_T(f_{*t})^2\lesssim\int\omega_Tf^2\quad\mbox{for}\;t\le T.
\end{align*}
Furthermore, since for our Gaussian we have $\nabla G_t(z)$ $=-\frac{z}{t}G_t(z)$ and
$\triangle G_t=(\frac{|z|^2}{t^2}-\frac d{t})G_t$, and  thus
$|\nabla G_t|\lesssim\frac{1}{\sqrt{t}}G_\frac{t}{2}$ and $|\triangle G_t|\lesssim\frac{1}{t} G_\frac{t}{2}$, we obtain
\begin{align*}
|\nabla f_{*t}|\lesssim\frac{1}{\sqrt{t}} |f|_{*\frac{t}{2}}, \quad |\triangle  f_{*t}|\lesssim\frac{1}{t} |f|_{*\frac{t}{2}}.
\end{align*}
Equipped with these auxiliary statements, we see that (\ref{cw17}) turns into
\begin{align}\label{cw18}
\int\omega_T(\frac{1}{T}(\phi_T-\phi_t)^2+|\nabla(\phi_T-\phi_t)|^2)
\lesssim
\int\omega_T\frac{1}{t}(|g_t|^2+\phi_t^2).
\end{align}

\medskip

We now turn to the $\sigma$-part. Note that by an application of the differential operator $\sqrt T\nabla\times$ to (\ref{cw01}) we recover (\ref{o17}), and thus
\begin{align}\label{cw61}
\sigma_t=\sqrt{t}\nabla\times g_t.
\end{align}
Hence by (\ref{o16}) and once more by (\ref{o17}) and (\ref{cw01}) we get
\begin{align*}
\frac{1}{T}(\sigma_T-\sigma_t)-\triangle(\sigma_T-\sigma_t)=(1-\frac{t}{T})\nabla\times(\frac{1}{\sqrt{t}}g_t)
+\nabla\times a\nabla(\phi_T-\phi_t),
\end{align*}
so that by (\ref{cw16}) we have in particular
\begin{align*}
\int\omega_T\frac{1}{T}|\sigma_T-\sigma_t|^2\lesssim
\int\omega_T(\frac{1}{t}|g_t|^2+|\nabla(\phi_T-\phi_t)|^2).
\end{align*}
The combination of this with (\ref{cw18}) yields
\begin{align*}
\int\omega_T\frac{1}{T}|(\phi_T,\sigma_T)-(\phi_t,\sigma_t)|^2\lesssim
\int\omega_T\frac{1}{t}|(\phi_t,g_t)|^2.
\end{align*}
By the triangle inequality in $L^2$, this clearly implies (\ref{cw02}) in conjunction with
$\int\omega_T\frac{1}{t}|\sigma_t|^2$ $\lesssim\int\omega_T|\nabla g_t|^2$, which follows from (\ref{cw61}).


\subsection{Proof of Proposition~\ref{p1ter}: Locality of the modified corrector}

We start by noting that (\ref{cw20}) is an easy consequence of (\ref{cw16}) (with $T$ replaced by $t$):
We first apply it to $u=\phi_t$ and thus $h=ae$ and $f=0$, see (\ref{o14}), to the effect of
$\int\omega_t(\frac{1}{t}\phi_t^2+|\nabla\phi_t|^2)$ $\lesssim 1$; for later reference we note
\begin{align}\label{cw21}
\int\omega_t|e+\nabla\phi_t|^2\lesssim 1.
\end{align}
In view of (\ref{o16}) and
stationarity in form of $|\langle q_t\rangle|^2$ $=\langle|q_t|^2\rangle$
$\le\langle\int\omega_t|q_t|^2\rangle$ this yields in particular 
$\int\omega_t|q_t-\langle q_t\rangle|^2$ $\lesssim 1$. We then apply (\ref{cw16}) (with $\Id$ playing the role
of $a$) to
$u=g_t$ and thus $f=\sqrt{t}(q_t-\langle q_t\rangle)$ and $h=0$, see (\ref{cw01}), which yields 
$\int\omega_t(\frac{1}{t}|g_t|^2+|\nabla g_t|^2)$ $\lesssim \int\omega_t|q_t-\langle q_t\rangle|^2$.
The combination gives (\ref{cw20}).

\medskip

We now turn to (\ref{cw04}) and write 
for abbreviation $(\phi_t',q_t',q_t')=(\phi_t,q_t,q_t)(a')$ in order to reserve 
$(\phi_t,q_t,q_t)$ for $(\phi_t,q_t,q_t)(a)$. We first apply (\ref{cw16}) (always with $T$ replaced by $t$) to $u=\phi_t'-\phi_t$
so that $h=(a'-a)(e+\nabla\phi_t)$, see (\ref{o14}), and with $a'$ playing the role of $a$, to obtain
\begin{align*}
\int\omega_t(\frac{1}{t}(\phi_t'-\phi_t)^2+|\nabla(\phi_t'-\phi_t)|^2)
\lesssim\int\omega_t|(a'-a)(e+\nabla\phi_t)|^2.
\end{align*}
We then apply (\ref{cw16}) to $u=g_t'-g_t$ and $f=\sqrt{t}(q_t'-q_t)$, see (\ref{cw01}), to the effect of
\begin{align*}
\int\omega_t(\frac{1}{t}|g_t'-g_t|^2+|\nabla(g_t'-g_t)|^2)\lesssim
\int\omega_t|q_t'-q_t|^2.
\end{align*}
In view of (\ref{o16}) and the triangle inequality in $L^2(\omega_t)$ we obviously have
\begin{align*}
\int\omega_t|q_t'-q_t|^2\lesssim\int\omega_t|\nabla(\phi_t'-\phi_t)|^2+\int\omega_t|(a'-a)(e+\nabla\phi_t)|^2.
\end{align*}
These three estimates combine to
\begin{align*}
\int\omega_t(\frac{1}{t}(\phi_t'-\phi_t)^2+\frac{1}{t}|g_t'-g_t|+|\nabla(g_t'-g_t)|^2)\lesssim
\int\omega_t|(a'-a)(e+\nabla\phi_t)|^2.
\end{align*}
Using our assumption that $a'-a$ vanishes on $B_R$, appealing to the elementary estimate
$\omega_t$ $\lesssim\exp(-\frac{|x|}{2C\sqrt{t}})\omega_\frac{t}{2}$, see (\ref{cw15}),
and using that $\int\omega_\frac{t}{2}|e+\nabla\phi_t|^2\lesssim 1$ (which follows from (\ref{cw21}) since by shift-covariance 
the latter yields $\sup_{y}\int_{B_{\sqrt{t}}(y)}|e+\nabla\phi_t|^2$ $\lesssim 1$), we have
\begin{align*}
\int\omega_t|(a'-a)(e+\nabla\phi_t)|^2\lesssim \exp(-\frac{R}{2C\sqrt{t}}).
\end{align*}
By the triangle inequality in $L^2(\omega_t)$ and by (\ref{cw20}), the two last estimates yield (\ref{cw04}).


\subsection{Proof of Lemma~\ref{L:cw}: Control by averages}

We first argue that
\begin{align}\label{cw13}
\langle\phi_T^2\rangle\lesssim\int_0^T\langle|(\nabla\phi_T)_{*t}|^2
+|(q_T-\langle q_T\rangle)_{*t}|^2\rangle dt.
\end{align}
By definition of $f_{*t}$ as the convolution of $f$ with the Gaussian of variance $t$ we have
$\partial_t f_{*t}$ $=\frac{1}{2}\triangle f_{*t}$ and thus 
$\partial_t (f_{*t})^2$ $=-|\nabla f_{*t}|^2+\nabla\cdot(f_{*t}\nabla f_{*t})$. Applied to the stationary
$f=\phi_T$ we thus obtain
\begin{align}\label{cw10}
\frac{d}{dt}\langle((\phi_T)_{*t})^2\rangle=-\langle|(\nabla\phi_T)_{*t}|^2\rangle.
\end{align}
From (\ref{o14}) and (\ref{o16}) in form of $\phi_T$ $=T\nabla\cdot(q_T-\langle q_T\rangle)$
and the semi-group property of convolution with Gaussians in form of $(\cdot)_{*t}$ $=((\cdot)_{*\frac{t}{2}})_{*\frac{t}{2}}$
we obtain $(\phi_T)_{*t}$ $=T(\nabla G_\frac{t}{2})*(q_T-\langle q_T\rangle)_{*\frac{t}{2}}$, where $G_\frac{t}{2}$
denotes the Gaussian of variance $\frac{t}{2}$, so that by Jensen's inequality
\begin{align}\label{cw11}
\langle((\phi_T)_{*t})^2\rangle\le 
T^2{\textstyle(\int|\nabla G_\frac{t}{2}|)}^2\langle|(q_T-\langle q_T\rangle)_{*\frac{t}{2}}|^2\rangle
\lesssim\frac{T^2}{t}\langle|(q_T-\langle q_T\rangle)_{*\frac{t}{2}}|^2\rangle.
\end{align}
Appealing to the elementary inequality for the function $[0,T]\ni t\mapsto\langle((\phi_T)_{*t})^2\rangle$
\begin{align*}
\langle\phi_T^2\rangle\le\int_0^T|\frac{d}{dt}\langle((\phi_T)_{*t})^2\rangle|dt
+\frac{1}{T}\int_\frac{T}{2}^T\langle((\phi_T)_{*t})^2\rangle dt,
\end{align*}
into which we insert (\ref{cw10}) \& (\ref{cw11}), we obtain (\ref{cw13}).

\medskip

We now argue that
\begin{align}\label{cw14}
\langle|g_T|^2+T|\nabla g_T|^2\rangle\lesssim\int_0^T\langle|(q_T-\langle q_T\rangle)_{*t}|^2\rangle dt.
\end{align}
Indeed, from $\partial_t f_{*t}$ $=\frac{1}{2}\triangle f_{*t}$
it is easy to check that $u=\int_0^\infty\exp(-\frac{t}{2T})f_{*t}dt$ provides the solution of
$\frac{1}{T}u-\triangle u=f$, so that from (\ref{cw01}) we obtain
\begin{align*}
g_T=\frac{1}{\sqrt{T}}\int_0^\infty\exp(-\frac{t}{2T})(q_T-\langle q_T\rangle)_{*t} dt.
\end{align*}
Testing (\ref{cw01}) with $g_T$, using the stationarity of $g_T$ and the above representation, we obtain
\begin{align*}
\langle\frac{1}{T}|g_T|^2+|\nabla g_T|^2\rangle=\frac{1}{T}\int_0^\infty\exp(-\frac{t}{2T})
\langle|(q_T-\langle q_T\rangle)_{*\frac{t}{2}}|^2\rangle dt,
\end{align*}
where on the RHS we used the semi-group property and symmetry of $(\cdot)_{*t}$ in form of
$\langle(q_T-\langle q_T\rangle)_{*t}\cdot(q_T-\langle q_T\rangle)\rangle$ 
$=\langle|(q_T-\langle q_T\rangle)_{*\frac{t}{2}}|^2\rangle$. After the change of variables $\frac{t}{2}=t'$,
splitting the integral into $\int_0^T dt'+\int_T^\infty dt'$ and using that by Jensen's inequality
$\langle|(q_T-\langle q_T\rangle)_{*t'}|^2\rangle$ $\le\langle|(q_T-\langle q_T\rangle)_{*t''}|^2\rangle$ for $t'\ge t''$
we obtain (\ref{cw14}).


\subsection{Proof of Lemma~\ref{lem:sensitivity}: Deterministic sensitivity estimate}
We split the proof into five steps. In Step~1 we reformulate the
carr\'e du champ via partition norms, which enables us to argue by duality. 
In Step 2 we provide a deterministic sensitivity estimate
for $(\nabla\phi_T,q_T)$. It relies on the
hole-filling argument provided in Step~5. In Step~3 we treat the case of the
functional derivative and in Step~4 the case of the oscillation.

\medskip 

\step 1 Reformulation by duality.

Set $\partn:=\{x+Q_0\,:\,x\in\Z^d\}$, where $Q_{0}=[-\frac{1}{2},\frac12)^d$ denotes the unit cube.
Let $F=F(a)$ and $\ell\ge 0$.
On the one hand we argue that for the functional derivative we have the implication 
\begin{equation}\label{e.diff-F-fun}
\sup_{a'\ne a} \frac{|F(a')-F(a)|}{\|a-a'\|_{\ell+1,*}} \leq 1
\,\quad \, \Rightarrow\qquad   (\ell+1)^{-d} \sup_{a}    \int  |\partial^{\fun}_{x,\ell+1} F|^2 dx\lesssim 1,
\end{equation}
where $\|a-a'\|_{\ell+1,*}^2:=\sum_{Q\in (\ell+1)\partn} \sup_{x \in Q} |a-a'|^2$.
On the other hand, for the oscillation, we have the corresponding implication
%
\begin{equation}\label{e.diff-F-osc}
\sup_{z\in \R^d} \sup_{a}\sum_{Q\in z+2(\ell+1)\partn} |\partial^{\osc}_{Q} F|^2\,\le\, 1
\qquad \Rightarrow\qquad  (\ell+1)^{-d} \sup_{a} \int |\partial^{\osc}_{x,\ell+1} F|^2dx \lesssim1,
\end{equation}
where in line with Definition~\ref{def:sLSI}
\begin{equation*}
|\partial^\osc_{Q} F(a)|:=\sup\{F(a')-F(a'')\,:\,a'=a''= a \text{ in } \R^d\setminus Q\,\}.
\end{equation*}

\medskip

We start with the proof of \eqref{e.diff-F-fun}. By Definition~\ref{def:sLSI}, we obviously have for $x\in Q$
%
$$
|\partial^{\fun}_{x,\ell+1}F|\leq\int_{B_{\ell+1}(Q)}|\frac{\partial F}{\partial a(z)}|\,dz,
$$
where $B_{\ell+1}(Q):=\{x\,:\,\dist(x,Q)<\ell+1\}$.
Combined with the additivity of the functional derivative with respect to sets, this yields
\begin{eqnarray*}
{\int \big|\partial^{\fun}_{x,\ell+1}F\big|^2dx}&=& \sum_{Q\in(\ell+1)\partn}\int_{Q}  \big|\partial^{\fun}_{x,\ell+1}F\big|^2dx
\\
&\leq & \sum_{Q\in(\ell+1)\partn} (\ell+1)^d\big(\int_{B_{\ell+1}(Q)}|\frac{\partial F}{\partial a(z)}|\,dz\big)^2\\
&\le& 3^{2d}(\ell+1)^d  \sum_{Q\in(\ell+1)\partn}\big(\int_{Q}|\frac{\partial F}{\partial a(z)}|\,dz\big)^2.
\end{eqnarray*}
Since $(\ell+1)\partn$ is a partition and the norm $\|\cdot \|_{\ell+1,*}$ is adapted to this partition, we have by duality
\begin{equation*}
\Big(\sum_{Q\in(\ell+1)\partn}\big(\int_{Q}|\frac{\partial F}{\partial a(z)}|\,dz\big)^2\Big)^\frac12 \, =\, \sup_{\|\delta a\|_{\ell+1,*}=1}\int \frac{\partial F}{\partial a(x)}\cdot \delta a(x)\,dx,
\end{equation*}
and the desired estimate \eqref{e.diff-F-fun} follows by bounding the functional derivative by the Lip\-schitz norm.

\medskip

We turn now to  the proof of \eqref{e.diff-F-osc}.
For all $x\in\mathcal\R^d$, by Definition~\ref{def:sLSI},
\begin{eqnarray*}
|\partial^\osc_{x,\ell+1} F(a)|=\sup\{F(a')-F(a'')\,:\,a'=a''=a\text{ in }\R^d\setminus B_{\ell+1}(x)\,\}\leq|\partial_{x+2(\ell+1)Q_0}^\osc F(a)|,
\end{eqnarray*}
so that
\begin{eqnarray*}
  \int|\partial^\osc_{x,\ell+1} F|^2\,dx&\leq& (2(\ell+1))^d\sum_{z\in 2(\ell+1)\Z^d}\fint_{z+2(\ell+1)Q_0}|\partial_{x+2(\ell+1)Q_0}^\osc F|^2\,dx\\
&=&(2(\ell+1))^d\fint_{2(\ell+1)Q_0}\sum_{Q\in z+2(\ell+1)\partn}|\partial_{Q}^\osc F|^2\,dz,
\end{eqnarray*}
and  \eqref{e.diff-F-osc} follows.

\medskip

\step 2 Deterministic sensitivity estimate using hole-filling.

Fix two $\lambda$-uniformly elliptic coefficient fields $a,a'$ and set for abbreviation
\begin{equation*}
\delta F:=(\nabla\phi_T(a'),q_T(a'))-(\nabla\phi_T(a),q_T(a)).
\end{equation*}
Then there exists an exponent $\e=\e(d,\lambda)>0$ (coming from hole-filling) such that
\begin{equation}\label{e.dSomegaT}
\int\omega_T|\delta F|^2\lesssim \|a-a'\|_{\ell+1,*}^2 (\frac{\ell+1}{\sqrt{T}}\wedge 1)^{\e d}.
\end{equation}
For the argument set $\delta a:=a'-a$, $\delta\phi_T:=\phi_T(a')-\phi_T(a)$ and $\delta q_T:=q_T(a')-q_T(a)$ and note that
\begin{eqnarray}
\label{aao14}
(\frac{1}{T}-\nabla\cdot a \nabla)\delta\phi_T&=&\nabla\cdot\delta a(\nabla\phi_T(a')+e),\\
\label{aao16}
\delta q_T&=&\delta a(\nabla\phi_T(a')+e)+a\nabla\delta\phi_T.
\end{eqnarray}
The energy estimate~\eqref{cw16} applied to \eqref{aao14} yields
\begin{equation*}
\int \omega_T(\frac{1}{T}\delta\phi_T^2+|\nabla
\delta\phi_T|^2)\,\lesssim\, \int \omega_T|\delta a(\nabla\phi_T(a')+e)|^2,
\end{equation*}
and thus by \eqref{aao16}
\begin{eqnarray*}
\int \omega_T|\delta F|^2&\lesssim& \int \omega_T|\delta a(\nabla\phi_T(a')+e)|^2\\
&\leq&\sum_{Q\in(\ell+1)\partn}\sup_Q|a'-a|^2\int_Q\omega_T|\nabla\phi_T(a')+e|^2\\
&\leq&\|a'-a\|_{\ell+1,*}^2\sup_{Q\in(\ell+1)\partn}\int_Q\omega_T|\nabla\phi_T(a')+e|^2.
\end{eqnarray*}
Hence for \eqref{e.dSomegaT}, it suffices to prove
\begin{equation}\label{cdecay}
\sup_{Q\in(\ell+1)\partn}\int_Q\omega_T|\nabla\phi_T(a')+e|^2\lesssim \big(\frac{\ell+1}{\sqrt T}\wedge 1\big)^{\e d},
\end{equation}
which we postpone to the last step.

\medskip

\step 3 Proof of \eqref{e.lem:sensitivity} for the functional derivative.  

Set $F_{*t}:=(\nabla\phi_T,q_T)_{*t}$. In view of Step~1, cf.~\eqref{e.diff-F-fun}, it suffices to show that for any pair of coefficient fields $a,a'$ we have
\begin{equation}\label{felix+1}
  |F_{*t}(a')-F_{*t}(a)|^2\lesssim \Big(\frac{\sqrt T}{\sqrt t}\Big)^d (\frac{\ell+1}{\sqrt{T}}\wedge 1)^{\e d}\,\|a'-a\|_{\ell+1,*}^2.
\end{equation}
Using Jensen's inequality for the Gaussian measure, followed by the relation $G_t \lesssim \big(\frac{\sqrt T}{\sqrt t}\big)^d \omega_T$ (since $T \ge t$)
between the Gaussian and exponential weights $G_t$ and $\omega_T$, we obtain with $\delta F$ defined as in Step~2,
\begin{equation*}
|F_{*t}(a')-F_{*t}(a)|^2\,\le\, (|\delta F|^2)_{*t} \,\lesssim\, \big(\frac{\sqrt T}{\sqrt t}\big)^d \int \omega_T |\delta S|^2.
\end{equation*}
The desired estimate \eqref{felix+1}  then follows from \eqref{e.dSomegaT} in  Step~2.
\medskip

\step 4 Proof of \eqref{e.lem:sensitivity} for the oscillation.

\nobreak

Set again $F_{*t}:=(\nabla\phi_T,q_T)_{*t}$. In view of Step~1, cf.~\eqref{e.diff-F-osc}, using 
\begin{equation*}
 | \partial^{\osc}_{Q} F_{*t}(a)|\leq 2\sup\{|F_{*t}(a_Q)-F_{*t}(a)|\,\big|\,a_Q=a\text{ in }\R^d\setminus Q\,\},
\end{equation*}
and by discrete duality, it suffices to show that for any coefficient field $a$, any shift $z\in\R^d$, any family $\{a_Q\}_{Q\in z+(\ell+1)\partn}$ of coefficient fields $a_Q$ with $a=a_Q$ outside of $Q$, and any real sequence $\omega=\{\omega_Q\}_{Q\in z+(\ell+1)\partn}$, we have
\begin{equation}\label{e.lem:sensitivity-4.1}
\Big(\sum_{Q\in z+(\ell+1)\partn} \omega_Q \big(F_{*t}(a_Q)-F_{*t}(a)\big)\Big)^2 \lesssim\, \Big(\frac{\sqrt T}{\sqrt t}\Big)^d
    (\frac{\ell+1}{\sqrt{T}}\wedge 1)^{\e d}\,\sum_{Q\in z+(\ell+1)\partn} \omega_Q^2.
\end{equation}
For notational convenience we replace $z$ by $0$.
For any $Q\in(\ell+1)\partn$ set
\begin{equation*}
  \delta_Q a:=a_Q-a,\qquad \delta_Q\phi_T:=\phi_T(a_Q)-\phi_T(a),\qquad \delta_Q q_T:=q_T(a_Q)-q_T(a),
\end{equation*}
and
\begin{gather*}
  \delta F:=\sum_{Q\in(\ell+1)\partn}\omega_Q(\nabla \delta_Q\phi_T,\delta_Qq_T).
\end{gather*}
Then as in Step~3 we have
\begin{equation*}
  \Big(\sum_{Q\in (\ell+1)\partn} \omega_Q \big(F_{*t}(a_Q)-F_{*t}(a)\big)\Big)^2\leq (|\delta F|^2)_{*t}\,\lesssim\, \big(\frac{\sqrt T}{\sqrt t}\big)^d \int \omega_T |\delta S|^2.
\end{equation*}
Moreover,  $(\delta_Q \phi_T,\delta_Q q_T)$ are decaying solutions of
\begin{eqnarray*}
(\frac{1}{T}-\nabla\cdot a \nabla)\delta_Q\phi_T&=&\nabla\cdot\delta_Q a(\nabla\phi_T(a_Q)+e),\\
\delta_Q q_T&=&\delta_Q a(\nabla\phi_T(a_Q)+e)+a\nabla\delta_Q\phi_T.
\end{eqnarray*}
Multiplying with $\omega_Q$ and summing over $Q$, the energy estimate~\eqref{cw16} yields as in Step~2
\begin{equation*}
  \begin{aligned}
    \int \omega_T|\delta F|^2&\lesssim \int \omega_T\big|\sum_{Q\in (\ell+1)\partn} \omega_Q \delta_Q a(\nabla\phi_T(a_Q)+e)\big|^2.
  \end{aligned}
\end{equation*}
Since $\supp \delta_Q a\subset Q$ this yields
\begin{eqnarray*}
  \int \omega_T|\delta F|^2& \lesssim &\sum_{Q\in(\ell+1)\partn} (\sup_{Q}|a_Q-a|)^2 \omega_Q^2 \int_{Q}\omega_T |\nabla\phi_T(a_Q)+e|^2 \\
&\lesssim&\Big(\sum_{Q\in(\ell+1)\partn}\omega_Q^2\Big) \sup_{Q\in (\ell+1)\partn} \int_{Q}\omega_T |\nabla\phi_T(a_Q)+e|^2,
\end{eqnarray*}
so that also \eqref{e.lem:sensitivity-4.1} follows from \eqref{cdecay}.

\medskip

\step5 Proof of  \eqref{cdecay}

In view of the energy estimate \eqref{cw16}, the LHS of \eqref{cdecay} is $\lesssim 1$, so that it is enough to consider the case of $\ell+1\leq\sqrt T$, for which \eqref{cdecay} assumes the form 
\begin{equation}\label{c.sqrtT}
  \fint_Q|\nabla\phi_T+e|^2\lesssim \big(\frac{\sqrt T}{\ell+1}\big)^{d(1-\e)}.
\end{equation}
This estimate follows from 
%
\begin{equation}\label{o23}
\fint_Q\frac{1}{T}\phi_T^2+|\nabla\phi_T+e|^2
  \,  \stackrel{\eqref{cw16}}{\lesssim} \,  (\frac{\sqrt T}{\ell+1})^d
\end{equation}
by Widman's hole-filling argument, see for instance 
\cite[p.81]{Giaquinta-83}. Since we could not find the variant with a massive term in the literature, we give the argument for \eqref{o23} $\Rightarrow$ \eqref{c.sqrtT} presently.
W.l.o.g.~we may assume that $Q$ is centered at $0$ so that $Q\subset B_R$ for $R\sim \ell+1$. 
In view of \eqref{o23}, it is enough to show
\begin{equation}\nonumber
  \int_{B_R}1+\frac{1}{T}\phi_T^2+|\nabla\phi_T+e|^2
  \lesssim (\frac{R}{\sqrt{T}})^{\e d}\int_{B_{\sqrt{T}}}1+\frac{1}{T}\phi_T^2+|\nabla\phi_T+e|^2
\end{equation}
for all (dyadic) radii $R\le\sqrt{T}$.
With $\e=\frac{\ln\frac{1}{\theta}}{\ln 2}$, this estimate is obtained
via iteration from
\begin{equation}\nonumber
  \int_{B_R}1+\frac{1}{T}\phi_T^2+|\nabla\phi_T+e|^2
  \le\theta \int_{B_{2R}}1+\frac{1}{T}\phi_T^2+|\nabla\phi_T+e|^2
\end{equation}
for some $\theta=\theta(d,\lambda)<1$.
In order to derive the latter with $\theta=\frac{C_0}{C_0+1}$, it is enough to establish
\begin{equation}\label{o62}
  \int_{B_R}1+\frac{1}{T}\phi_T^2+|\nabla\phi_T+e|^2
  \le C_0 \int_{B_{2R}\setminus B_R}1+\frac{1}{T}\phi_T^2+|\nabla\phi_T+e|^2
\end{equation}
for some $C_0=C_0(d,\lambda)<\infty$ -- this is the origin of the name ``hole-filling''. The last estimate
is obtained from the following Caccioppoli estimate: Test (\ref{o14}) with $\eta^2(\phi_T-c)$,
where $\eta$ is a cut-off for $B_R$ in $B_{2R}$ and $c=\fint_{B_{2R}\setminus B_{R}}\phi_T$ to the effect of
\begin{equation}\nonumber
  \int\eta^2\frac{1}{T}\phi_T(\phi_T-c)+\eta^2\nabla\phi_T\cdot a(\nabla\phi_T+e)=-2\int\eta(\phi_T-c)\nabla\eta\cdot
  a(\nabla\phi_T+e),
\end{equation}
which by uniform ellipticity implies 
\begin{equation}\nonumber
  \int\eta^2(\frac{1}{T}\phi_T^2+|\nabla\phi_T+e|^2)\lesssim\int\eta^2\frac{1}{T}|c||\phi_T|+
  (\eta^2+\eta|\phi_T-c||\nabla\eta|)
  |\nabla\phi_T+e|.
\end{equation}
By Young's inequality, this yields
\begin{equation}\nonumber
  \int\eta^2(\frac{1}{T}\phi_T^2+|\nabla\phi_T+e|^2)\lesssim\int\eta^2(\frac{1}{T}|c|^2+1)
  +(\phi_T-c)^2|\nabla\eta|^2,
\end{equation}
and thus by the choice of the cut-off $\eta$
\begin{equation}\nonumber
  \int_{B_R}\frac{1}{T}\phi_T^2+|\nabla\phi_T+e|^2\lesssim R^d(\frac{1}{T}|c|^2+1)
  +\frac{1}{R^2}\int_{B_{2R}\setminus B_R}(\phi_T-c)^2.
\end{equation}
Because of the choice of $c$ we obtain by Jensen's inequality for the first RHS term
and by the Poincar\'e estimate with mean value zero
on the annulus $B_{2R}\setminus B_R$ for the last one
\begin{equation}\nonumber
  \int_{B_R}\frac{1}{T}\phi_T^2+|\nabla\phi_T+e|^2\lesssim R^d+ \frac{1}{T}\int_{B_{2R}\setminus B_R}\phi_T^2
  +\int_{B_{2R}\setminus B_R}|\nabla\phi_T|^2,
\end{equation}
which can be rewritten in the iterable form (\ref{o62}).


\section{Optimal stochastic integrability of $r_*$: Proof of stochastic results}

\subsection{Proof of Theorems~\ref{g},~\ref{gbis}, and~\ref{gbis2}: Optimal stochastic integrability of $r_*$}
We only prove Theorems~\ref{g} and~\ref{gbis}. The argument for Theorem~\ref{gbis2} is simpler (we need not prove properties of $\pi_*$ in that case). Unless stated otherwise we use $\lesssim$ for $\leq$ up to a multiplicative constant only depending on $d,\lambda$ and $\pi$.

\medskip

\step 1 Reformulation.

\nobreak


In this proof, for all $\nu>0$ we define $r_{**}$ (implicitly depending on $\nu$) as the smallest (possibly infinite) random variable that satisfies 
\begin{align}\label{e.theo-1}
\int\omega_T \frac{1}{T}|(\phi_T,\sigma_T)|^2\le(\frac{r_{**}}{R})^{2\nu}\quad\mbox{for all dyadic}\;R\ge r_{**}\;\mbox{and}\;T=R^2,
\end{align}
which, in view of \eqref{cw15}, is a slightly stronger form of \eqref{gr05} in Proposition~\ref{p1}.
Proposition~\ref{p1} then implies that $r_*\le Cr_{**}$ for some constant $C$.
Since the function $\pi_*$ is increasing and satisfies the scaling relation \eqref{min.decay}, the above implies
$\frac1{C'}\pi_*(r_*)\le \pi_*(r_{**})$ for some $C'$ depending only on $C$ and $\pi_*$.
Hence, the claim of the theorem follows if we prove that $r_{**}$ satisfies the moment bound \eqref{g.0} (resp. \eqref{g.0bis}) for a suitable range of $\nu>0$ depending on $\pi$ (which also encodes the dependence on  $\kappa$ for the standard LSI).
In this first step of the proof, we focus on super-level sets of $r_{**}$, and shall show that there exist some dyadic threshold $r_0=r_0(d,\lambda,\pi)$ and some (generic) constants $C(d,\lambda,\pi),\bar C(d,\lambda,\pi)$
such that for all $0<\nu\le \e$ (with $\e=\e(d,\lambda,\pi)$ given in Corollaries~\ref{c.fluct.T-LSI} and~\ref{c.fluct.T}), the random variable $r_{**}$ associated with the exponent $\nu$ (through \eqref{e.theo-1})
satisfies
\begin{equation}\label{e.theo-2}
  \forall \text{ dyadic }r\geq r_0\,:\quad \expec{I(r_{**}\geq r)}\leq \sum_{R\geq r\text{ dyadic}}\expec{I\bigg(\int \omega_{T}(F_t-\expec{F_t}) >\frac{1}{C}\big(\frac{r}{R}\big)^{2\nu}\bigg)},
\end{equation}
where $F_t:=\int\omega_t\big(\frac{1}{t}\phi_t^2+\frac{1}{t}|g_t|^2+|\nabla g_t|^2\big)$ and for $R,T,r$, and $t$ related via
\begin{equation}\label{e.def-t}
  \sqrt T=R\qquad\text{ and }\qquad t=\bar C (\frac{R}{r})^{2\frac{\nu}{\e}}\le \frac T2.
\end{equation}
Here comes the argument. By contraposition of the definition \eqref{e.theo-1} of $r_{**}$  and a union bound, we have for all dyadic $r$
\begin{equation*}
  \expec{I(r_{**}\geq r)}\,\le\,\sum_{R\geq r\text{ dyadic}}\expec{I\bigg(\int \omega_{T} \frac1{T} |(\phi_{T},\sigma_{T})|^2>\big(\frac{r}{R}\big)^{2\nu}\bigg)}.
\end{equation*}
Since for all functions $h\ge 0$ and all $t\leq \frac T2$ we have $\int \omega_T h\lesssim \int \omega_T(h*\omega_t)$,  Proposition~\ref{p1bis} yields
for some $C_1=C_1(d,\lambda)$ and all $t\leq T$,
\begin{equation}\label{e.ant-fin-1}
  \int \omega_{T} \frac1{T} |(\phi_{T},\sigma_{T})|^2\leq C_1\int\omega_T F_t.
\end{equation}
In turn, Corollary~\ref{c.fluct.T-LSI} (resp. Corollary~\ref{c.fluct.T}) yields for all $T,t\geq 1$
\begin{equation}\label{e.ant-fin-2}
\expec{F_t}\leq C_2t^{-\e}
\end{equation}
for some constant $C_2=C_2(d,\lambda,\pi)$.
We choose now a dyadic $r_0=r_0(d,\lambda,\pi)$ so large that
\begin{equation}\label{e.ant-fin-3}
(2C_1C_2)^\frac{1}{\e} \frac{1}{r_0^2} \le \frac12.
\end{equation}
Given $0<\nu\le \e$, and dyadic $R\ge r\ge r_0$ we specify $t$ in line with \eqref{e.def-t} as
\begin{equation}\label{e.ant-fin-4}
t:= (2C_1C_2)^\frac{1}{\e} (\frac{R}{r})^{2\frac{\nu}{\e}},
\end{equation}
and note that our choice \eqref{e.ant-fin-3} of $r_0$ ensures that
$$
t\,\stackrel{\nu\le \e,R\ge r}{\le} \,  (2C_1C_2)^\frac{1}{\e}(\frac{R}{r})^{2} \,=\,  (2C_1C_2)^\frac{1}{\e}T \frac{1}{r^2} \,\stackrel{r\ge r_0}\le\,  T (2C_1C_2)^\frac{1}{\e} \frac{1}{r_0^2}\,\stackrel{\eqref{e.ant-fin-3}}\le \, \frac T2.
$$
The combination of \eqref{e.ant-fin-1} and \eqref{e.ant-fin-2} thus yields for all $0<\nu \le \e$ and all 
dyadic $R\ge r\ge r_0$, $\sqrt{T}=R$, and $t$ given by  \eqref{e.ant-fin-4}
\begin{eqnarray*}
  \int \omega_{T} \frac1{T} |(\phi_{T},\sigma_{T})|^2>(\frac{r}{R})^{2\nu}\ &\stackrel{\eqref{e.ant-fin-1}}{\Rightarrow}&\ \int \omega_T F_t>\frac{1}{C_1}(\frac{r}{R})^{2\nu}\\
  &\stackrel{\eqref{e.ant-fin-2},\eqref{e.ant-fin-4}}\Rightarrow&\ \int \omega_T (F_t-\expec{F_t})>\frac{1}{2C_1}(\frac{r}{R})^{2\nu}.
\end{eqnarray*}
This implies \eqref{e.theo-2} in the regime of parameters \eqref{e.def-t} for some $C,\bar C$ depending only on $C_1$, $C_2$, and $\e$.

\medskip

\step2 Proof for the standard LSI.

\nobreak

\noindent In this case, Proposition~\ref{p1ter} ensures that $F_t$ satisfies the assumptions of Lemma~\ref{lem:concLSI},
which yields the existence of a positive constant $C'=C'(d,\lambda,\kappa)$ such that for all $0<\nu \le \e$, all dyadic $\sqrt{T}=R\ge r\ge r_0$
and $t$ given by  \eqref{e.def-t},
\begin{equation}\label{e.sLSI}
  \expec{I\bigg(\int \omega_{T}(F_t-\expec{F_t}) >\frac{1}{C}\big(\frac{r}{R}\big)^{2\nu}\bigg)}\leq \exp\Big(-\frac1{C'}\big(\frac{r}{R}\big)^{4\nu}  \big(\frac{\sqrt{T}}{\sqrt{t}}\big)^d\Big).
\end{equation}
It remains to choose $\nu$.
By \eqref{e.def-t}, 
\begin{equation}\label{e.est-t}
  \big(\frac{\sqrt{T}}{\sqrt{t}}\big)^d\gtrsim r^d(\frac{R}{r})^{d(1-\frac\nu\e)},\qquad\text{and thus }
  \big(\frac{r}{R}\big)^{4\nu} \big(\frac{\sqrt{T}}{\sqrt{t}}\big)^d\gtrsim r^d(\frac{R}{r})^{d-4\nu(1+\frac d{4\e})},
\end{equation}
so that provided $0<\nu<\frac{d\e}{4(\e+d)}$ the RHS of \eqref{e.sLSI} is summable w.r.t. $R\geq r$ dyadic, 
with the contribution from $R=r$ being dominant.
Fixing such a $\nu=\nu(d,\lambda)$, we conclude with help of \eqref{e.theo-2} that for some positive constant $C''=C''(d,\lambda,\kappa)$ and for all dyadic $r\geq r_0$,
$$
\expec{I(r_{**}\ge r)}\,\le\, C''\exp(-\frac1{C''} r^d).
$$
By summation over dyadic $r\ge r_0$, this yields the desired  (stretched) exponential moment bound on $r_{**}$, and therefore on $r_*$ by Step~1.

\medskip

\step3 Proof for MLSI.

As in the previous step, the combination of Proposition~\ref{p1ter} with Lemma~\ref{lem:conc}~\emph{(v)} yields
for some positive constant $C'=C'(d,\lambda,\pi)$, for all $0<\nu \le \e$, all dyadic $\sqrt{T}=R\ge r\ge r_0$
and $t$ given by  \eqref{e.def-t},
$$
 \expec{I\bigg(\int \omega_{T}(F_t-\expec{F_t}) >\frac{1}{C}\big(\frac{r}{R}\big)^{2\nu}\bigg)}
\,\le\,\exp\Big(-\frac1{C'} \big(\frac{r}{R}\big)^{4\nu}\pi_*\big(\frac{\sqrt{T}}{\sqrt{t}}\big) \Big).
$$
In Step~4 below, we shall argue that for any $0<\tilde \beta<\beta$, and in particular for $\tilde \beta =\frac \beta 2$, there exists $\ell_0=\ell_0(\pi)\gg1$ such that
\begin{equation}\label{as:gamma}
\forall K\geq 1\text{ and }\ell\geq \ell_0\,:\,K^{\tilde \beta} \pi_*(\ell)\,\leq \, \pi_*(K\ell) .
\end{equation}
Combined with the definition \eqref{e.def-t} of $t$ and the first estimate in \eqref{e.est-t}, this implies
\begin{eqnarray*}
 \big(\frac{r}{R}\big)^{4\nu}  \pi_*\big(\frac{\sqrt{T}}{\sqrt{t}}\big)
  &\gtrsim&(\frac{R}{r})^{\tilde\beta(1-\frac{\nu}{\e})-4\nu}  \pi_*(r).
\end{eqnarray*}
Choosing $\nu=\nu(d,\lambda,\pi)$ in the range $0<\nu<\frac{\tilde \beta \e}{4\e+\tilde \beta}$, we continue to argue as in Step~2.

\medskip

\step4 Proof of \eqref{as:gamma}.

Recall that $\frac1{\pi_*(\ell)}=\fint_{B_\ell} \gamma(|x|)dx$.
Using spherical coordinates, we have
\begin{equation*}
 \frac1{\pi_*(\ell)\gamma(\ell)}=d\int_0^1\theta^{d-1}\frac{\gamma(\theta\ell)}{\gamma(\ell)}\,d\theta.
\end{equation*}
By appealing to \eqref{min.decay} and Fatou's lemma we deduce that
\begin{equation*}
  \liminf\limits_{\ell\to\infty}\frac1{\pi_*(\ell)\gamma(\ell)}\geq d\int_0^1\theta^{d-1}\liminf\limits_{\ell\to\infty}\frac{\gamma(\theta\ell )}{\gamma(\ell)}\,d\theta\geq d\int_0^1\theta^{d-1-\beta}\,d\theta=\frac{d}{d-\beta}.
\end{equation*}
Hence, for any $0<\tilde\beta<\beta$ we can find $\ell_0\gg 1$ such that
\begin{equation}\label{ZZ.3}
\frac1{\pi_*(\ell)\gamma(\ell)}\geq \frac{d}{d-\tilde\beta}\qquad\text{for all }\ell\geq \ell_0.
\end{equation}
In addition, we have $(\frac{1}{\pi_*})'(\ell)\,=\,d\int_0^1\theta^d \gamma'(\theta\ell)d\theta=-\frac{d}{\ell} \frac{1}{\pi_*}(\ell)(1-\gamma(\ell)\pi_*(\ell))$, so that for $K\ge 1$
\begin{equation*}
 \log \pi_*(\ell)- \log \pi_*(K\ell)=-\int_\ell^{K\ell}\frac{d}{r}(1-{\gamma(r)}{\pi_*(r)})\,dr.
\end{equation*}
Combined with \eqref{ZZ.3}, this yields for $\ell \geq \ell_0$
\begin{equation*}
\log \pi_*(\ell)- \log \pi_*(K\ell)\leq-\int_\ell^{K\ell}\frac{d}{r}(1-\frac{d-\tilde\beta}{d})\,dr=-\tilde\beta\int_1^K\frac{1}{s}\,ds=\log(K^{-\tilde\beta}),
\end{equation*}
which we rewrite as the desired estimate $K^{\tilde \beta} \pi_*(\ell)\,\le \, \pi_*(K\ell)$.
%


\subsection{Proof of Corollaries~\ref{c.fluct.T-LSI} and~\ref{c.fluct.T}: Control of the expectation}\label{sec:clfuctT}

By Lemma~\ref{L:cw}, it is enough to control the quantity 
\begin{equation*}
\frac{1}{T}\int_0^T\langle|(\nabla\phi_{T})_{*t}|^2+|(q_{T})_{*t}-\langle q_T\rangle|^2\rangle dt,
\end{equation*}
which is the integral of a variance. 
For $t\le T^{1-\e}$, by Jensen's inequality and stationarity, we have
\begin{equation}\label{e.fluct-1}
\frac{1}{T}\int_0^{T^{1-\e}} \langle|(\nabla\phi_{T})_{*t}|^2+|(q_{T})_{*t}-\langle q_T\rangle|^2\rangle dt
\,\lesssim\, T^{-\e} \expec{|\nabla \phi_T|^2+1} \,\lesssim\, T^{-\e}.
\end{equation}
For $t\ge T^{1-\e}$, we appeal MSG (which follows from MLSI) in the form
\begin{equation}\label{e.st1.2nd-mom}
\langle|(\nabla\phi_{T})_{*t}|^2+|(q_{T})_{*t}-\langle q_T\rangle|^2\rangle\,\leq\, \expec{\int_0^\infty \pi(\ell) (\ell+1)^{-d}  \int_{\R^d} \big|\partial^{\fun/\osc}_{x,\ell+1} F_{*t} \big|^2dxd\ell}
\end{equation}
with $F_{*t}:=(\nabla\phi_T,q_T)_{*t}$. (For the standard LSI, replace the integral over $\ell$ by the integrand for $\ell=0$.)
We split the rest of the proof into two steps.
In the first step, we prove the claim for the standard LSI, and conclude with the general case in Step~2.

\medskip

\step1 Proof of \eqref{o8-LSI}.

By Lemma~\ref{lem:sensitivity} (applied with $\ell=0$),
\begin{equation*}
 {\int_{\R^d} \big|\partial^{\fun/\osc}_{x,1}F_{*t}\big|^2dx }\,\lesssim\, \Big(\frac{\sqrt T}{\sqrt t}\Big)^d \sqrt{T}^{-\e d},
\end{equation*}
so that
\begin{equation*}
\frac1T \int_{T^{1-\e}}^T\int_{\R^d} \big|\partial^{\fun/\osc}_{x,1}F_{*t} \big|^2dxdt \,\lesssim\,\sqrt{T}^{-2-\e d+d} \int_{T^{1-\e}}^T \sqrt{t}^{-d}dt\,
\lesssim\, T^{-\e},
\end{equation*}
where in dimension $d=2$, we slightly reduced $\e>0$ to absorb the logarithm.
Combined with \eqref{e.fluct-1} and \eqref{e.st1.2nd-mom}, the desired result follows.

\medskip

\step2 Proof of  \eqref{o8}.

Starting from a general partition $(\ell+1)\partn$ for $\ell\ge 0$, Lemma~\ref{lem:sensitivity} 
yields
\begin{equation*}
(\ell+1)^{-d}{\int_{\R^d} \big|\partial^{\fun/\osc}_{x,\ell+1}F_{*t} \big|^2dx}\,\lesssim\, \Big(\frac{\sqrt T}{\sqrt t}\Big)^d (\frac{\ell+1}{\sqrt{T}}\wedge 1)^{\e d}.
\end{equation*}
In particular, this implies
\begin{eqnarray*}
\lefteqn{\int_0^\infty \pi(\ell)(\ell+1)^{-d}  \int_{\R^d} \big|\partial^{\fun/\osc}_{x,\ell+1} F_{*t} \big|^2dxd\ell}
\\
&\lesssim&  \Big(\frac{\sqrt{T}}{\sqrt{t}}\Big)^{d}  \int_0^\infty (\frac{\ell+1}{\sqrt{T}}\wedge 1)^{\e d} \pi(\ell)d\ell
\\
&\lesssim&\Big(\frac{\sqrt{T}}{\sqrt{t}}\Big)^d \Big(\sqrt{T}^{-\e d}\int_0^{\sqrt{T}} \pi(\ell) (\ell+1)^{\e d}\,d\ell+\int_{\sqrt{T}}^\infty \pi(\ell)d\ell\Big).
\end{eqnarray*}
By \eqref{m2} \& \eqref{min.decay}, and an integration by parts,
$$
\int_0^{\infty} \pi(\ell) (\ell+1)^{\e d}\,d\ell\,\stackrel{\eqref{m2}}{\lesssim} \, 1+ \int_0^\infty \gamma(\ell)(\ell+1)^{\e d-1}d\ell
\,\stackrel{\eqref{min.decay}}{\lesssim}\, 1+ \int_0^\infty (\ell+1)^{\e d-1-\beta}d\ell \,\lesssim \,1
$$
provided $\e d-\beta<0$ (which we may assume w.l.o.g.~by reducing the hole-filling exponent $\e$).
By~\eqref{m2}, $\gamma$ is non-increasing, so that for all $R>0$ we have using \eqref{m_*} 
\begin{equation*}
\int_{R}^\infty \pi(\ell)d\ell\,\stackrel{\eqref{m2}}{=}\,\gamma(R)\,\leq \, \fint_{B_R}\gamma(|x|)\,dx
\,\stackrel{\eqref{m_*}}=\,\pi_*(R)^{-1}.
\end{equation*}
Hence,
$$
{\int_0^\infty  \pi(\ell) (\ell+1)^{-d}\int_{\R^d} \big|\partial^{\fun/\osc}_{x,\ell+1}F_{*t} \big|^2dxd\ell}\,\lesssim\, \Big(\frac{\sqrt{T}}{\sqrt{t}}\Big)^d(\sqrt{T}^{-\e d}+\pi_*(\sqrt{T})^{-1}).
$$
Combined with  \eqref{min.decay} in form of  $\pi_*(r)^{-1}\lesssim r^{-\beta}$ and the relation  $\e d-\beta<0$,  this turns into 
$$
{\int_0^\infty \pi(\ell) (\ell+1)^{-d} \int_{\R^d} \big|\partial^{\fun/\osc}_{x,\ell+1}F_{*t} \big|^2dxd\ell}\,\lesssim\, \Big(\frac{\sqrt{T}}{\sqrt{t}}\Big)^d\sqrt{T}^{-\e d}.
$$
As in Step~1, this yields
$$
\frac1T \int_{T^{1-\e}}^T{\int_0^\infty \pi(\ell) (\ell+1)^{-d} \int_{\R^d} \big|\partial^{\fun/\osc}_{x,\ell+1}F \big|^2dxd\ell}dt\,\lesssim\, T^{-\e},
$$
and therefore proves the desired estimate in combination with  \eqref{e.fluct-1} and \eqref{e.st1.2nd-mom}.

\appendix

\section{Caccioppoli's inequality}\label{append:Cacc}

In the proofs, we shall make intensive use of the classical Caccioppoli  argument,
which we state for future reference, and prove for the reader's convenience.
\begin{lemma}\label{L:reg}
Let $R\geq 1$. Consider $u,g$ related in a distributional sense by
\begin{equation}\label{stef.1}
    -\nabla\cdot a\nabla u=-\nabla\cdot g\qquad\text{in }B_R.
\end{equation}
There exists a constant $C=C(d,\lambda)>0$ such that for any constant $c$ we have
\begin{equation}\label{f.51}
\forall 0<\rho<R\,:\qquad  \int_{B_{R-\rho}}|\nabla u|^2\leq C\left(\int_{B_R}|g|^2+\frac{1}{\rho^2}\int_{B_R\setminus B_{R-\rho}}(u-c)^2\right).
\end{equation}
\qed
\end{lemma}

\begin{proof}[Proof of Lemma~\ref{L:reg}]

For the convenience of the reader, we recall the standard argument under
the weak ellipticity assumption (\ref{f.40}), which thanks to the homogeneity
of the coefficients could be weakened further,
see \cite[Proposition 2.1]{Giaquinta-83}. By scaling, we may w.\ l.\ o.\ g.\
assume that $R=1$ and by adding a constant, $c=0$, so that it remains to show
\begin{equation}\label{f.61}
    \int_{B_{1-\rho}}|\nabla u|^2\lesssim\int_{B_1}|g|^2+\frac{1}{\rho^2}\int_{B_1\setminus B_{1-\rho}}u^2.
\end{equation}
where here and below $\lesssim$ stands for $\leq$ up to a constant that depends on $d$ and $\lambda$.
To this purpose we test $-\nabla\cdot a\nabla u=0$
with $\eta^2 u$, where $\eta$ is a cut-off for $B_{1-\rho}$ in $B_{1}$; using Leibniz' rule 
in form of
\begin{eqnarray*}\nonumber
\lefteqn{\nabla(\eta^2u)\cdot a\nabla u}\\
&=&\nabla(\eta u)\cdot a\nabla(\eta u)+u\nabla\eta\cdot a\nabla(\eta u)
-u\nabla(\eta u)\cdot a\nabla\eta-u^2\nabla\eta\cdot a\nabla\eta,
\end{eqnarray*}
we obtain with \eqref{stef.1} the identity
\begin{equation}\nonumber
    \int\nabla(\eta u)\cdot a\nabla(\eta u)=\int g\cdot \nabla(\eta^2u)+ \int(-u\nabla\eta\cdot a\nabla(\eta u)
    +u\nabla(\eta u)\cdot a\nabla\eta+u^2\nabla\eta\cdot a\nabla\eta).
\end{equation}
With $|\nabla(\eta^2u)|\leq |u|\eta|\nabla\eta|+\eta|\nabla(u\eta)|$ and by uniform ellipticity and boundedness of $a$, cf.\ (\ref{f.40}) and (\ref{f.56}), this yields
\begin{equation}\nonumber
\lambda\int|\nabla(\eta u)|^2\le\int |g|\eta(|\nabla(\eta u)|+|\nabla \eta||u|)+\int(2|u||\nabla\eta||\nabla(\eta u)|+u^2|\nabla\eta|^2).
\end{equation}
By Young's inequality this entails
\begin{equation}\nonumber
\int|\nabla(\eta u)|^2\lesssim\int |g|^2\eta^2+\int u^2|\nabla\eta|^2,
\end{equation}
so that by the properties of the cut-off function we obtain (\ref{f.61}).
Note that this also yields
\begin{equation}\label{e.add-Caccio}
\int  |\nabla u|^2\eta^2 \lesssim\int |g|^2\eta^2+\int u^2|\nabla\eta|^2.
\end{equation}
\end{proof}



\section*{Acknowledgements}
We thank Peter Bella, Mitia Duerinckx, and Julian Fischer for suggestions on the manuscript.
AG acknowledges financial support from the European Research Council under
the European Community's Seventh Framework Programme (FP7/2014-2019 Grant Agreement
QUANTHOM 335410). SN acknowledges support by the DFG in the context of TU Dresden's Institutional Strategy ``The Synergetic University''.
AG and FO acknowledge the hospitality of the Mittag-Leffler Institute and the support of the Chaire
Schlumberger  at IH\'ES.

\bibliographystyle{plain}

\end{document}